\theoremstyle{thmit} 
\newtheorem{thm}{Theorem}[section]
\newtheorem{lem}[thm]{Lemma}
\newtheorem{conjecture}[thm]{Conjecture}
\newtheorem{cor}[thm]{Corollary}
\newtheorem{prop}[thm]{Proposition}
\newtheorem{definition}[thm]{Definition}
\theoremstyle{thmrm} 
\newtheorem{rem}{Remark}
\DeclareSymbolFont{bbold}{U}{bbold}{m}{n}
\DeclareSymbolFontAlphabet{\mathbbold}{bbold}
\numberwithin{equation}{section}
\newcommand\EatDot[1]{}
 \newcommand{\cF}{\mathcal{F}}
\newcommand{\cG}{\mathcal{G}} 
\newcommand{\cK}{\mathcal{K}} \newcommand{\cL}{\mathcal{L}}
 \newcommand{\cN}{\mathcal{N}}
\newcommand{\cO}{\mathcal{O}} 
 \newcommand{\cR}{\mathcal{R}}
\newcommand{\cW}{\mathcal{W}} 
 \newcommand{\fR}{\mathfrak{R}}
 \newcommand{\bF}{\mathbb{F}}
\newcommand{\bK}{\mathbb{K}}
\newcommand{\bQ}{\mathbb{Q}}
 \newcommand{\bZ}{\mathbb{Z}}
\newcommand{\fl}{\mathfrak{l}} \newcommand{\fp}{\mathfrak{p}}
\newcommand{\fm}{\mathfrak{m}}
\newcommand{\fq}{\mathfrak{q}}
\newcommand{\cyc}{{\mathrm{cyc}}}
\newcommand{\Gal}{{\mathrm{Gal}}}
\newcommand{\Coker}{{\mathrm{Coker}}}
\newcommand{\Sel}{{\mathrm{Sel}}}
\newcommand{\Hom}{\mathrm{Hom}}
\newcommand{\ot}{\otimes}
\newcommand{\Image}{\mathrm{Im}}
\newcommand{\pr}{^{\prime}}
\newcommand{\ds}{\displaystyle}
\newcommand{\op}{\oplus}
\newcommand{\T}{\text}
\newcommand{\Sss}{S^\mathrm{ss}}
\newcommand{\Sbad}{S^\mathrm{bad}}
\newcommand{\bfp}{{\bar{\fp}}}
\newcommand{\bfq}{{\bar{\fq}}}
\newcommand{\ur}{{\mathrm{ur}}}
\newcommand{\Tor}{{\mathrm{Tor}}}
\newcommand{\Tr}{{\mathrm{Tr}}}
\newcommand{\Epinf}{E_{p^{\infty}}}
\newcommand{\wEpinf}{\widehat{E}_{p^{\infty}}}
\newcommand{\Pd}{^{\wedge}}
\newcommand{\hooklongrightarrow}{\lhook\joinrel\longrightarrow}
\newtheorem{corollary 4.9}{Corollary 4.9}
\newcommand{\depth}{\mathrm{depth}}
\newcommand{\cf}{{\textit{cf. }}}
\title[Residual supersingular Iwasawa theory]{Residual supersingular Iwasawa theory over quadratic imaginary fields}
\author[P. Hamidi]{Parham Hamidi}
\address{Department of Mathematics, The University of British Columbia,\\
	Room 121, 1984 Mathematics Road\\
	Vancouver, BC\\
	Canada V6T 1Z2}
\email{phamidi@math.ubc.ca}
\subjclass[2020]{Primary 11R23, 14H52}
\keywords{Iwasawa theory, supersingular elliptic curves, Selmer groups}
\begin{document}
	
	\maketitle
	\begin{abstract}
		Let $ p $ be an odd prime. 
		Let $ E $ be an elliptic curve defined over a quadratic imaginary field where $ p $ splits completely. Suppose $ E $ has supersingular reduction at primes above $ p $.
		Under appropriate hypotheses, we extend the results of \cite{SujFil} to $ \bZ_p^{2} $-extensions. 
		{We  define and study the fine double-signed residual Selmer groups in these settings.}
		We  prove that for two residually isomorphic elliptic curves, the vanishing of the signed $ \mu $-invariants of one elliptic curve implies the  vanishing of the signed $ \mu $-invariants of the other.
		Finally, we  show that the Pontryagin dual of the Selmer group and the double-signed Selmer groups have no non-trivial pseudo-null submodules for these extensions.
	\end{abstract}
	\section{Introduction}\label{sec:Introducion}
	{
		Consider two elliptic curves defined over $ \bQ $ with good ordinary reduction at prime $ p $ whose residual Galois representations are isomorphic.
		Greenberg and Vatsal in \cite{GreenbergVatsal} showed that the vanishing of the $ \mu $-invariant attached to the Pontryagin dual of the Selmer group over the cyclotomic $ \bZ_p $-extension  of one curve implies the vanishing of the $ \mu $-invariant for the other.
		Their work uses an auxiliary Selmer group called\textit{ nonprimitive dual Selmer group} which has the same $ \mu $-invariant as the Pontryagin dual of the Selmer group.
		{ 
			For an elliptic curve $ E/\bQ $ with good supersingular reduction at $ p $, the Selmer group over the cyclotomic extension is no longer cotorsion.
			Kobayashi in \cite{Kobayashi} defined the signed Selmer group over the cyclotomic $ \bZ_p $-extension using moderately stronger local conditions at $ p $ and showed that they are cotorsion over Iwasawa modules.
		}
		A similar approach has been taken in the study of elliptic curves with supersingular reduction for the signed Selmer groups. 
		The $ \mu $-invariants of the Pontryagin dual of the signed Selmer groups are referred to as the \textit{signed $ \mu $-invariants}.
		An analogue of Greenberg--Vatsal {was proved} by B. D. Kim in \cite{Kim2009} for the signed $ \mu $-invariants over cyclotomic $ \bZ_p $-extension using nonprimitive dual Selmer groups.
		Later, in \cite{SujFil} the authors used a new technique to work with residual representations of elliptic curves and they improved upon the results of Kim in \cite{Kim2009}.
		{They defined a new Selmer group called \textit{the fine residual signed Selmer group} 
			and studied its structure as an Iwasawa module, in particular, the vanishing of the signed $ \mu $-invariants (\textit{cf.} \cite[Theorem 4.12]{SujFil}).}

		In this paper, we follow the strategy of \cite{SujFil} to prove new results for the signed $ \mu $-invariants of $ \bZ_p^{2} $-extensions of quadratic imaginary fields. 
		However, the Iwasawa theory of $ \bZ_p^{2} $-extensions has additional technicalities compared to the cyclotomic  $ \bZ_p$-extensions.
		We  define the fine signed residual Selmer groups in these settings (\cf Definition \ref{def:signed residual Selmer}). 
		In Proposition \ref{prop:fine selmer only depends on residual reps} we show that these residual singed Selmer groups depend only on the isomorphism class of the residual Galois representation of elliptic curves.
		Therefore, these groups provide a natural method to study congruences of elliptic curves.
		We then relate the structure of the fine signed residual  Selmer groups to that of the signed Selmer groups as Iwasawa modules (\cf Proposition \ref{prop:cR andmathscrR have the same corank}).
		Theorem \ref{thm:TFAE} relates the module structure of the fine signed residual Selmer groups to the vanishing of the signed $ \mu $-invariants.
		We use this theorem to show that if two elliptic curves have isomorphic residual representations, then vanishing of the signed $\mu $-invariants for one implies the same for the other.
		Furthermore, we relate these results to analogous results over cyclotomic extensions proved in \cite{SujFil} (\cf Corollary \ref{cor:TFAE-cyc implied by TFAE}).
		
		Moreover, we prove in  that the Pontryagin dual of the signed Selmer groups and the classical Selmer group have no non-zero pseudo-null submodules (\cf Theorem \ref{thm:no no pseudo-null}).
		This is an important property for an Iwasawa module, since the structure of finitely generated modules over commutative Iwasawa algebras is known up to pseudo-isomorphism.
		Hence, whenever such modules have no non-zero pseudo-null submodules we have a better understanding of their  structure.
		To this this, we invoke Auslander--Buchsbaum--Serre formula and compute the depth of the signed Selmer groups by studying Galois cohomology of these groups.
		
		This paper consists of five sections including this introductory section and it is organized as follows.
		{ 
			In section \ref{sec:preliminaries}, we introduce preliminary definitions, notations, and assumptions.
		}
		Our goal in section \ref{sec:local and global cohomology calculations} is to prove some of the essential components of the  proof of our main results.
		{In section \ref{sec:Signed Selmer and fine residual Selmer groups as Iwasawa modules}, we prove Theorem \ref{thm:TFAE} and record some important consequences of it.}
		Finally, in section \ref{sec:Pseudo-null submodules}  we use purely algebraic tools to compute the depth of signed Selmer groups and prove Theorem \ref{thm:no no pseudo-null} which states that the Pontryagin dual of the Selmer group and the double-signed Selmer groups have no non-trivial pseudo-null submodules.
		
		\section{Preliminaries}\label{sec:preliminaries}
		Let $ E/L $ be an elliptic curve defined over a quadratic imaginary number field  $L/\bQ$. 
		Suppose $ p $ is an odd prime and let $ S_p $ denote the set of all primes of $ L $ over $ p $.
		Denote the finite set of primes above $ p $ where $ E/L $ has supersingular reduction by $ \Sss \subseteq S_p $ and
		let $ \Sbad $ denote the finite set of primes in $ L $ where $ E $ has bad reduction. 
		Let $ S $ be the disjoint union of the sets $ S_p $ and $ \Sbad $.
		For any field extension $ \cL/L $, let $ S_p(\cL) $ be the set primes in $ \cL $ above the set $ S_p $. 
		Similarly, let $ S^{*}(\cL) $ for $ * \in \{ \emptyset, \ \T{bad}, \ \T{ss} \} $ denote the set of primes in $ \cL $ above the corresponding finite set $ S^{*} $.
		Throughout this article we assume the following hypotheses, which we refer to as \hyperref[Hyp1]{Hyp 1}.
		\begin{enumerate}[{\textbf{Hyp 1}} (i):]\label{Hyp1}
			\item The prime $ p $ is odd. 
			\item The number field $L/\bQ$ is a quadratic imaginary field extension. 
			Furthermore, assume that $ p $ splits completely in $ L/\bQ $. We denote the primes of $L$ over $ p $ by $\fp$ and $ \bar{\fp}$. 
			In particular, $ \fp \neq \bar{\fp} $ and $ L_\fp = L_\bfp = \bQ_p $.
			\item\label{allss} The elliptic curve $E/L$ has good supersingular reduction at both primes above prime $ p $. 
			Therefore, the sets $  \Sss = S_p = \{ \fp, \bar{\fp} \} $.  
			\item We have $ a_\fp=1+ N_{L/\bQ}(\fp) - \#\widetilde{E}_\fp(\cO_L/\fp)= a_\bfp = 0 $ where
			$ N_{L/\bQ} : L \to \bQ  $ is the usual norm map and $\widetilde{E}_\fp$ is the reduced curve modulo $ \fp $.
		\end{enumerate} 
		
		Let $ L_\cyc/ L $ denote the cyclotomic  $ \bZ_p $-extension of $ L $ and let $ L_\infty/ L $ denote the compositum of all $ \bZ_p $-extensions of $ L $. 
		Leopoldt's conjecture implies that $ \Gal(L_\infty/L) \cong \bZ_p^2 $.
		Note that Leopoldt's conjecture is known for abelian extensions of rational numbers, and therefore it is not a hypothesis in this case (\textit{cf.} Corollary 5.32 and Theorem 13.4 of \cite{Wash1}).
		{Let $ L^{S} $ be the maximal algebraic extension of $ L $ unramified outside of the primes inside $ S $ and $ G_{L}^{S} $ denote the Galois group of the extension $ L^{S}/L $. Note that $L_\infty \subset L^{S}$.
			Let 
			\begin{align*}
				G_{\infty}^{S}:=\Gal(L^{S}/L_\infty),  \ G := \Gal(L_\infty/ L) \cong \bZ_p^{2}, \ \T{and }   \Gamma :=  \Gal(L_\cyc/ L) \cong \bZ_p.
			\end{align*}
		}
			Given a prime $ w $ in $ S_p(L_\infty) $, by abuse of notation, let the prime below $  w$ in $ L_n $  be again denoted by $ w $.
			It should be clear from the context whether the prime $ w $ is in $ L_n $ for some $ n\geq 0 $ or is in $L_\infty  $.
			Finally, for an abelian group  $ M $ and an integer $ t \geq 1 $ we write $ M_t $ for the subgroup of elements of $ M $ that are annihilated by $ t $.
			Moreover, for a prime $ p $, we define the $p$-primary torsion subgroup of $M$, which we denote by $ M_{p^{\infty}} $, as 
			\[
			M_{p^{\infty}}:= \ds\cup_{i\geq 1} M_{p^{i}}.
			\] 
			
			\subsection{Plus and minus norm groups}
			Suppose $ K/\bQ_p $ is a finite unramified extension of $\bQ_p$. 
			Denote the cyclotomic (\textit{resp}. the unramified) $ \bZ_p $-extension of $ K $ by $ K_\cyc $ (\textit{resp}. $K^{\ur} $)
			{which is totally ramified.}
				For any integer $ r\geq 0 $, let $ K_r $ be the unique intermediate field extension of $ K_\cyc/K $ such that $ \Gal(K_r/K)= \bZ/p^{r}\bZ $ and for any integer $ l\geq 0 $, let $ K^{(l)} $ be the unique intermediate field extension of $ K^{\ur}/K $ such that $ \Gal(K^{l}/K)$ is isomorphic to $\bZ/p^{l}\bZ $.
				Further, let $ K^{(l)}_r$ be $ K^{l} K_r $, so $ \Gal(K^{(l)}_r/K)$ is $\bZ/p^{l}\bZ \times \bZ/p^{r}\bZ  $ and  
				{
					\begin{align*}
						K^{\ur}_{\cyc} = K^{\ur}K_{\cyc} = \cup_{r,l\geq 0}K^{l} K_r,
					\end{align*}
					with $ \Gal(K^{\ur}_{\cyc}/K)$ equal to $\bZ_p^{2} $.
				}
				Let $ \mu_{p^{r}} $ denote the $ p^{r} $-th {roots} of unity and let $ \bK^{(l)}_r $ to be $ K^{(l)}(\mu_{p^{r+1}})  $ for $ l \geq 0 $ and $ r\geq -1 $.
				For all $ l,r \geq 0 $, denote
				\begin{align}\label{not:delta}
					\Delta := \Gal(K(\mu_p)/K)= \Gal(\bK^{(0)}_{0}/K^{(0)}_{0})\cong \Gal(\bK^{l}_{r}/K^{l}_{r}).
				\end{align}
				In this article, with abuse of notation, we write 
				$ (\bK^{(l)}_r)^{\Delta} $ is equal to $K^{(l)}_{r} $.

				In what follows, let $ \widehat{E}(\fm_\cL) $ by $ \widehat{E}(\cL) $ where $ \cL/\bQ_p $ is any local field over $ \bQ_p $ and $ \fm_\cL $ is the maximal ideal of the valuation ring  of $ \cL $. 
				Denote the ring of integers of $ \cL  $ by $\cO_\cL   $.
			Suppose $ E/\bQ_p $ is an elliptic curve  and let $\widehat{E}  $ denote the formal group over $ \bZ_p $
			associated to the minimal model of $ E $ over $ \bQ_p $.
			\begin{definition}\label{def:plusminusnormgroups}
				Following \cite[Definition 8.16]{Kobayashi},
				define the \textbf{plus and minus norm groups} as 
				\begin{small}
					\begin{align*}
						\widehat{E}^{\pm}(\bK^{(l)}_r) & := \left\{ P \in \widehat{E}(\bK^{(l)}_r) \mid  \Tr^{r}_{m+1} (P) \in  \widehat{E}(\bK^{(l)}_m), -1 \leq m \leq r-1 \T{ s.t. } (-1)^{m}= \pm 1 \right\},
						\\
						E^{\pm}(\bK^{(l)}_r) & := \left\{ P \in E(\bK^{(l)}_r) \mid  \Tr^{r}_{m+1} (P) \in  E(\bK^{(l)}_m),  -1 \leq m \leq r-1 \T{ s.t. } (-1)^{m}= \pm 1\right\},\\
						\widehat{E}^{\pm}(K^{(l)}_r) &:= \left( \widehat{E}^{\pm}(\bK^{(l)}_r) \right)^{\Delta}, \T{ and } E^{\pm}(K^{(l)}_r) := \left( E^{\pm}(\bK^{(l)}_r) \right)^{\Delta},
					\end{align*}
				\end{small}
				\[
				\widehat{E}^{\pm}(\bK^{(\ur)}_\cyc) := \bigcup_{r,l\geq 0} \widehat{E}^{\pm}(\bK^{(l)}_r), \T{ and } E^{\pm}(\bK^{\ur}_\cyc) := \bigcup_{r,l\geq 0} \widehat{E}^{\pm}(\bK^{(l)}_r).
				\]
			\end{definition}
				Suppose $ E/\bQ_p $ is an elliptic curve with good reduction at $ p $ and such that $ a_p=0 $. 
				{Then, by Proposition 8.7 of \cite{Kobayashi}, for all $ l,r \geq 0 $, $ \widehat{E}(\bK^{(l)}_{r}) $ has no non-trivial $ p $-torsion points.}
				This implies that 
				\begin{align}\label{eq:Ep=0}
					E(\bK^{(l)}_{r})_{p}=E(K^{(l)}_{r})_{p} = \{ 0 \}.
				\end{align}
			\subsection{Signed Kummer maps}\label{sec:kummer-maps}
				Suppose $ L $ and $ S $ are as \hyperref[Hyp1]{Hyp 1}. 
				For any $ n \geq 0 $, let $ L_n $ be the unique sub-extension of $ L_\infty/L $ such that 
				\begin{align}\label{def:Ln}
					\Gal(L_n/L) \cong (\bZ/p^{n}\bZ)^{2}, \quad L= L_0 , \quad L_\infty := \ds\cup_{n\geq 0} L_n.
				\end{align}		
				For simplicity, let $ G_{n}^{S}$ denote $\Gal(L^{S}/L_n) $.
			Note that if  $ w|v $ is a prime of $  L_\infty $ not above $ p $, then $ L_{\infty,w} $ is the unique unramified $ \bZ_p $-extension (\textit{cf.} for example \cite[Proposition 13.2]{Wash1}) of the local field $ L_v $.
			%
		Let $ \cL \in \{ K^{(l)}_{r}, \bK^{(l)}_{r}, L_{n} \} $ and $ \cG \in \{ G_{K^{(l)}_{r}}, G_{\bK^{(l)}_{r}}, G_n^{S}  \} $. 
		For any integer $ t \geq 0 $, there is a short exact sequence of $ \cG  $-modules 
		\[
		0 \longrightarrow
		E(\cL)/p^{t}E(\cL) \xrightarrow{\kappa^{p^{t}}_{\cL}} 
		H^{1}(\cG, E_{p^{t}}) \longrightarrow
		H^{1}(\cG, E)_{p^{t}} \longrightarrow
		0,
		\]
		where the map  $ \kappa^{p^{t}}_{\cL} $ is the Kummer map for $ E_{p^{t}} $ over $ \cL $. 
		Taking the direct limit of the above sequence we obtain the following
		\[
		0 \longrightarrow
		E(\cL) \otimes \bQ_p/\bZ_p \xrightarrow{\kappa^{p^{\infty}}_{\cL}} 
		H^{1}(\cG, E_{p^{\infty}}) \longrightarrow
		H^{1}(\cG, E)_{p^{\infty}} \longrightarrow
		0.
		\] 
		For $v \in S$, we define the local condition
		\begin{align}\label{def:Jv(Ln)}
			J_v(E_{p^{\infty}}/L_n) :=  \displaystyle\bigoplus_{w \mid v} H^1(L_{n,w},E)_{p^{\infty}}\cong
			\displaystyle\bigoplus_{w \mid v} \frac{H^1 (L_{n,w},E_{p^\infty})}{E(L_{n,w})  \otimes \bQ_p/\bZ_p},
		\end{align}
		where the isomorphism is due to the Kummer map (\textit{cf.} \cite[Section 1.6]{CoatesSujatha_book}). 
		The \textbf{classical $ p^{\infty} $-Selmer group} of the elliptic curve $ E $ over $ L_n $ {is defined} by the following sequence
		\begin{equation}\label{def:Selmer with J_n}
			0 \longrightarrow 
			\Sel(E_{p^{\infty}}/L_n) \rightarrow H^1(L^{S}/L_n, E_{p^\infty}) \xrightarrow{\lambda_n}  \displaystyle\bigoplus_{v \in S}J_v(E_{p^{\infty}}/L_n),
		\end{equation}
		where  $\lambda_n$ consists of restriction maps coming from Galois cohomology.
		By definition of the plus/minus norm groups, there are inclusions 
		\[
		E^{\pm}(L_{n,w}) \hooklongrightarrow E(L_{n,w}),\T{ and }  \widehat{E}^{\pm}(L_{n,w})  \hooklongrightarrow \widehat{E}(L_{n,w}),
		\]
		where  $ w $ is a prime in $ L_n $ above $ p $,  $ \fm_n $ is the maximal ideal of the ring of integers of $L_{n,w}(\mu_{p})  $.
			{By Lemma 3.4 of \cite{SujFil}, the above maps remain injective after applying the functor $ - \ot \ \bZ/p^{t}$ for any $ t \geq 1 $}
			\[
			\begin{tikzcd}
				E^{\pm}(L_{n,w})/ p^{t} E^{\pm}(L_{n,w}) \arrow[r, hook] & E(L_{n,w})/ p^{t} E(L_{n,w}),
			\end{tikzcd}
			\]
			where $ w $ is a prime in $ L_n $ above $ p $. 
			The Kummer map $ \kappa^{p^{t}}_{\cL} $ induces the following injective map
			\begin{equation}\label{signedkummermaps}
				\begin{tikzcd}
					\kappa^{\pm,p^{t}}_{L_{n,w}} \ : \ E^{\pm}(L_{n,w})/ p^{t} E^{\pm}(L_{n,w}) \arrow[r, hook] & H^{1}(L_{n,w}, E_{p^{t}}).
				\end{tikzcd}
			\end{equation}
			Similarly, for any $ n,t \geq 1  $, there is an injection
			\[
			\begin{tikzcd}
				\kappa^{\pm,p^{t}}_{L_{n,w}} \ : \ \widehat{E}^{\pm}(L_{n,w})/ p^{t} \widehat{E}^{\pm}(L_{n,w}) \arrow[r, hook] & H^{1}(L_{n,w}, \widehat{E}_{p^{t}}).
			\end{tikzcd}
			\]
			Refer to the maps $ \kappa^{\pm,p^{t}}_{L_{n,w}} $ as the \textbf{signed Kummer maps}  for $ E_{p^{t}} $ over $ L_{n,w} $.
		\subsection{Selmer group, signed Selmer groups, and fine signed residual Selmer groups}\label{sec:selmer-group-}
		Let $ E/L $ be an elliptic curve satisfying \hyperref[Hyp1]{Hyp 1} and let $ L_n/ L $ be as in \eqref{def:Ln}.
			{If $ w  $ belongs to $  \Sss(L_n) = S_p(L_n) $ then $ w| \fp $ or $ w| \bfp $, by \hyperref[Hyp1]{Hyp 1} part \hyperref[allss]{(iii)}.}
			Denote a prime in $ L_n $ which lies over $ \fp $ (\textit{resp.} $ \bfp $) by $ \fq $ (\textit{resp.} $ \bfq $).
			Similarly, if $  w  \in S_p(L_\infty) $ and $ w |\fp $ (\textit{resp.} $  w |\bfp $), denote $ w $ by $ \fq $ (\textit{resp.} $ \bfq $).
			Finally, if $ \fq  $ (\textit{resp.} $ \bfq  $) is in $ L_\infty  $, denote its restriction to $ L_n $ again by $ \fq $ (\textit{resp.} $ \bfq $).
			It should be clear from the context whether $ \fq  $ and $ \bfp $ represent primes in $ L_n $ or in $ L_\infty $.
		{Following \cite{SujFil}, for $ v \in S $ define the local cohomological groups }
		\begin{equation}\label{eq:K_v pm}
			^{\pm}\widetilde{K}_v(E_{p}/L_n):=
			\begin{dcases}
				\bigoplus_{w|\fl} H^{1}(L_{n,w}, E_{p}) & \text{if}\ v=\fl \in \Sbad ,\\
				\bigoplus_{\fq | \fp } H^{1}(L_{n,\fq}, E_{p}) / \Image \ \kappa^{\pm,p}_{L_{n,\fq}} & \text{if}\ v=\fp \in \Sss,\\
				\bigoplus_{\bfq | \bfp } H^{1}(L_{n,\bfq}, E_{p}) / \Image \ \kappa^{\pm,p}_{L_{n,\bfq}} & \text{if}\ v=\bfp \in \Sss.
			\end{dcases}
		\end{equation}
		Similarly
		\begin{equation}\label{eq:J_v pm}
			J_v^{\pm}(E_{p^{\infty}}/L_n):=
			\begin{dcases}
				\bigoplus_{w|\fl} H^{1}(L_{n,w}, E_{p^{\infty}}) & \text{if}\ v=\fl \in \Sbad, \\
				\bigoplus_{\fq | \fp } H^{1}(L_{n,\fq}, E_{p^{\infty}}) / \Image \ \kappa^{\pm,p^{\infty}}_{L_{n,\fq}} & \text{if}\ v=\fp \in \Sss,\\
				\bigoplus_{\bfq | \bfp } H^{1}(L_{n,\bfq}, E_{p^{\infty}}) / \Image \ \kappa^{\pm,p^{\infty}}_{L_{n,\bfq}} & \text{if}\ v=\bfp \in \Sss.
			\end{dcases}
		\end{equation}
		Here, when $ v \in \Sss $, the sign of $ ^{\pm}\widetilde{K}_v(E_{p}/L_n) $ (\textit{resp}. $ J_v^{\pm}(E_{p^{\infty}}/L_n) $) agrees with the choice of the sign of the Kummer map $ \kappa^{\pm,p} $ (\textit{resp}. $  \kappa^{\pm,p^{\infty}} $) in the direction of $ \fq|\fp $ or in the direction of $ \bfq|\bfp $.
		When $ v \in \Sbad $, the sign choice of the sign does not matter. 
		Similarly, when $ v \in \Sbad $ then $ J_v^{\pm}(E_{p^{\infty}}/L_n) $ coincides with $ J_v(E_{p^{\infty}}/L_n) $ for any choice of the sign. 
		The following definition is the analogue of \cite[Definition 3.6]{SujFil} over $ \bZ_p^{2} $-extensions.
		\begin{definition}\label{def:signed residual Selmer}
			Let $ n\geq 0 $
			For every intermediate field $L_n $ in the tower $ L_\infty/L $,  let $ ^{\pm}\widetilde{K}_v(E_{p}/L_n) $ be as \eqref{eq:K_v pm} for any $ v \in S $.
			Define the \textbf{fine signed residual Selmer group} $ \cR^{\pm/\pm}(E_p/L_n) $ of $ E_p $ over $ L_n  $ by:
			\begin{align*}
				\cR^{\pm/\pm} (E_p/L_n) := \ker \left( H^{1}(G^{S}_n,E_p) \to 
				\displaystyle\bigoplus_{v \in S}
				{^{\pm}\widetilde{K}_v(E_{p}/L_n)} \right).
			\end{align*}
			The choice of the first sign of $ \cR^{\pm/\pm} (E_p/L_n) $  is in accordance with the choice of $ ^{+}\widetilde{K}_{\fp}(E_{p}/L_n) $ or $ ^{-}\widetilde{K}_{\fp}(E_{p}/L_n) $ and the second sign is in accordance with the choice of $ ^{+}\widetilde{K}_{\bfp}(E_{p}/L_n) $ or $ ^{-}\widetilde{K}_{\bfp}(E_{p}/L_n)$.
		\end{definition}
		Here, we use the notation $ \cR^{\pm/\pm} (E_p/L_n) $ for convenience. 
			It unifies notation for any of the four possibilities: "$  \cR^{+/+} (E_p/L_n) $, or $ \cR^{+/-} (E_p/L_n) $, or $ \cR^{-/+} (E_p/L_n) $, or $ \cR^{-/-} (E_p/L_n) $".
			Similar notation is used throughout the article for simplicity.
		\begin{definition}\label{def:signed Selmer}
			For every $ n\geq 0$ let $ J_v^{\pm}(E_{p^{\infty}}/L_n) $ be as \eqref{eq:J_v pm} for any $ v \in S $.
			Define {the \textbf{signed Selmer groups} } $ \Sel^{\pm/\pm}(E_{p^{\infty}}/L_n) $ of $ E_{p^{\infty}} $ over the intermediate field $L_n $ in the tower $ L_\infty/L $ as follows 
			\begin{align*}
				\Sel^{\pm/\pm} (E_{p^{\infty}}/L_n) := \ker \left( H^{1}(G^{S}_n,E_{p^{\infty}}) \to 
				\displaystyle\bigoplus_{v \in S} J_v^{\pm}(E_{p^{\infty}}/L_n) \right).
			\end{align*}
			The first sign of $ \Sel^{\pm/\pm} (E_{p^{\infty}}/L_n) $  corresponds to the choice of $ J_\fp^{+}(E_{p^{\infty}}/L_n) $ or $ J_\fp^{-}(E_{p^{\infty}}/L_n) $ and 
			the second sign corresponds to the choice of $ J_\bfp^{+}(E_{p^{\infty}}/L_n) $ or $ J_\bfp^{-}(E_{p^{\infty}}/L_n) $.
		\end{definition}
		\begin{rem}\label{rem:adding-finite-set-of-primes-plus-minus}
			Suppose $ S\pr $ is a finite set of primes containing set $ S $.
			Then for any prime $ v \in S\pr \backslash S $, define the local condition $ J_v^{\pm}(E_{p^{\infty}}/L_n) $ or ${^{\pm}}\widetilde{K}_v(E_{p}/L_n)  $ the same way we did for the bad primes in $ \Sbad $.
			We note that adding a finite set of primes to the set $ S $ does not change these Selmer groups (\cf Chapter 1, Section 1.7 of \cite{SujathabookTIFR}).
		\end{rem}
		Let $ G_n := \Gal(L_n/L) $. 
		Then,  there is an exact sequence of $ \Lambda(G_n) $-modules (\textit{resp}. $ \Omega(G_n) $-modules):
		\begin{equation}\label{eq:signed Selmer with J_v pm over L_n}
			0 \longrightarrow 
			\Sel^{\pm/\pm} (E_{p^{\infty}}/L_n) \longrightarrow H^1(G_n^{S}, E_{p^{\infty}}) \longrightarrow \displaystyle\bigoplus_{v \in S} J_v^{\pm}(E_{p^{\infty}}/L_n)
		\end{equation}
		\textit{resp}. 
		\begin{equation}\label{eq:signed residual Selmer with J_v pm over L_n}
			0 \longrightarrow 
			\cR^{\pm/\pm} (E_p/L_n) \longrightarrow H^1(G_n^{S}, E_{p}) \longrightarrow  \displaystyle\bigoplus_{v \in S}
			{^{\pm}\widetilde{K}_v(E_{p}/L_n)}.
		\end{equation}
		By taking the direct limit over the intermediate field extensions $L_n$ of the exact sequences \eqref{def:Selmer with J_n}, \eqref{eq:signed Selmer with J_v pm over L_n}, and \eqref{eq:signed residual Selmer with J_v pm over L_n} we obtain
		\begin{align}
			0 \longrightarrow 
			\Sel(E_{p^{\infty}}/L_\infty) \longrightarrow H^1(G_{\infty}^{S}, E_{p^\infty}) \xrightarrow{\lambda_\infty}  \displaystyle\bigoplus_{v \in S}J_v(E_{p^{\infty}}/L_\infty),\label{eq:Selmer infty}\\
			0 \longrightarrow 
			\Sel^{\pm/\pm} (E_{p^{\infty}}/L_\infty) \longrightarrow H^1(G_{\infty}^{S}, E_{p^{\infty}}) \xrightarrow{\xi^{\pm/\pm}}  \displaystyle\bigoplus_{v \in S} J_v^{\pm}(E_{p^{\infty}}/L_\infty),\label{eq:signed Selmer infty with J_v pm}\\
			0 \longrightarrow
			\cR^{\pm/\pm} (E_p/L_\infty) \longrightarrow H^1(G_{\infty}^{S}, E_{p}) \xrightarrow{\xi_p^{\pm/\pm}}  \displaystyle\bigoplus_{v \in S} {^{\pm}\widetilde{K}_v(E_{p}/L_\infty)}. \label{eq:signed residual Selmer infty with k_v pm}
		\end{align}
		The above exact sequences   \eqref{eq:Selmer infty}, \eqref{eq:signed Selmer infty with J_v pm}, and \eqref{eq:signed residual Selmer infty with k_v pm} have a natural $ G $-action.
		This action gives these Selmer groups a module structure over Iwasawa algebras, which we now introduce.
		\begin{definition}\label{def:Iwasawa algebra}
			The \textbf{Iwasawa algebra} of $ \cG $, denoted by $ \Lambda(\cG) $ (\textit{resp}. $ \Omega(\cG) $), to be the completed group algebra of $ \cG $ over $ \bZ_p $ (\textit{resp.} $ \bF_p $).  
			That is,
			\[
			\Lambda(\cG):=\bZ_{p}[[\cG]]=\varprojlim_{\cN \subset \cG} \bZ_{p}[\cG/\cN], \  (\T{\textit{resp.} } \Omega(\cG):=\bF_{p}[[\cG]]=\Lambda(\cG)/p\Lambda(\cG)),
			\]
			where $\cN$ runs over open normal sub-groups of $\cG$.		
		\end{definition}
		
		In this article, we deal with $\cG \cong \bZ_p^n$ from some $ n  \in \{1 , 2\} $. 
		When the group $  \cG = G \cong \bZ_p^2$  then $ n = 2 $  and when we have $   \cG = \Gamma \cong \bZ_p$ then  $ n=1 $.
		The Iwasawa algebra $\Lambda(\cG)$ (\textit{resp}. $ \Omega(\cG) $) is isomorphic to the ring of formal power series $\bZ_p[[T_1,\cdots,T_n]]$ (\textit{resp}. $\bF_{p}[[T_1,\cdots,T_n]]$) with indeterminate variables $T_1,\cdots,T_n$. 
		Therefore $\Lambda(\cG)$ is a commutative, regular local ring of Krull dimension $n+1$.

		Note that \eqref{eq:Selmer infty} and \eqref{eq:signed Selmer infty with J_v pm} are exact sequences of $ \Lambda(G) $-modules and \eqref{eq:signed residual Selmer infty with k_v pm} is an exact sequence of $ \Omega(G) $-modules.
		Let $\mathfrak{X}(E_{p^{\infty}}/L_\infty)$ denote the Pontryagin dual of $\Sel(E_{p^{\infty}}/L_\infty) $.
		It is important to note here that $\mathfrak{X}(E_{p^{\infty}}/L_\infty)$ is conjectured to have positive rank, and therefore it is not a torsion $ \Lambda(G) $-module. 
		This is shown over the cyclotomic $\bZ_p$-extensions (\textit{cf.} \cite[Theorem 2.5]{CoatesSujatha_book}). 
		For a Galois extension with a pro-$ p $  Galois group without $ p $-torsion which contains the cyclotomic  $\bZ_p$-extension, this is conjectured to hold (\textit{cf.}  \cite{OchiVenjakov}).
		However, the Pontryagin dual of the signed Selmer group $\Sel^{\pm/\pm} (E_{p^{\infty}}/L_\infty)   $, which we denote by $\mathfrak{X}^{\pm/\pm}(E_{p^{\infty}}/L_\infty)$, is conjectured to be torsion as a $ \Lambda(G) $-module. For a more generalized version of this see Conjecture 4.11 of \cite{Lei-Lim}.
			We denote
			\[
			\mu_G^{\pm/\pm}(E_{p^{\infty}}/L_\infty):=\mu_G(\mathfrak{X}^{\pm/\pm}(E_{p^{\infty}}/L_\infty)),
			\] 
			and we call them the \textbf{signed $ \mu $-invariants}. 
			Suppose that $ M $ is a discrete cofinitely generated $\Lambda(\cG)$-module and let $ M\Pd $ denote its Pontryagin dual.
			Then, there is an isomorphism
				$ (M_p)\Pd \cong M\Pd / p M\Pd $
			and the  inequality
			\begin{align}\label{ineq:coranks}
				\T{corank}_{\Lambda(\cG)} (M) \leq \T{corank}_{\Omega(\cG)} (M_p).
			\end{align}
			{The inequality \eqref{ineq:coranks} becomes an equality exactly when the $\mu$-invariant of $M\Pd$ is zero. }

		
		\section{Local and global cohomology calculations}\label{sec:local and global cohomology calculations}
		Throughout this section, we assume that $ E/L $ is an elliptic curve satisfying \hyperref[Hyp1]{Hyp 1} and $ L_n/L $ is as defined in equation \eqref{def:Ln} for any $ n \geq 0 $.
		Let $ \mathcal{Y}^{\pm/\pm}(E_{p}/L_\infty) $ denote the Pontryagin dual of the fine signed residual Selmer group $ \cR^{\pm/\pm}(E_{p}/L_\infty) $ defined in Definition \ref{def:signed residual Selmer}.
		Our goal in this article is to show that if $ E_1/L $ and $ E_2/L $ are two elliptic curves satisfying \hyperref[Hyp1]{Hyp 1} and they are such that their residual Galois representations $ (E_1)_{p} $ and $(E_2)_{p}  $ are isomorphic, then 
		\[
		\mu_G^{\pm/\pm}((E_{1})_{p^{\infty}}/L_\infty) = 0 \iff 
		\mu_G^{\pm/\pm}((E_{2})_{p^{\infty}}/L_\infty) = 0
		\]
		given that $ \mathcal{Y}^{\pm/\pm}((E_{j})_{p}/L_\infty) $  is a torsion $ \Omega(G) $-module for $ j \in \{1,2\} $.
		\subsection{Comparison of the local and global cohomology groups}
		Let $ n\geq 0 $ and let $ L_n/L $ be as in  \eqref{def:Ln}.
		{
				For convenience, given a local field  $ \cK $, let $ G_{\cK} $ denote the Galois group of the extension $ \overline{\cK}/\cK $ where $ \overline{\cK} $ denotes the separable closure of $ \cK $. 
		}
		Suppose $ \cG \in \{ G_n^{S}, G_{L_{n,w}} \} $ where $ w \in S(L_n) $.
		There is a short exact sequence
		\begin{equation}\label{eq:step-to-define-psi}
			0 \longrightarrow 
			E(\cL)_{p^{\infty}}/p E(\cL)_{p^{\infty}} \longrightarrow 
			H^{1}(\cG, E_p) \xrightarrow{\psi_{\cG,n}}
			H^{1}(\cG, E_{p^{\infty}})_p  \longrightarrow
			0.
		\end{equation}
		Recall that $ L_{n,w}= K^{(l)}_r $ for some $ l,r \geq 0 $. Equation \eqref{eq:Ep=0} tells us that  $ 	E(L_{n,w})_p = E(L_{n})_p =\{ 0 \}$.
		This means that the first term in the short exact sequence \eqref{eq:step-to-define-psi} vanishes. 
		Therefore for any $ n \geq 0 $,
		the map $ \psi_{\cG,n}  $ is an isomorphism.
		It remains to investigate $ \psi_{\cG,n}  $ when $  \cG= G_{L_{n,w}} $ for some  $ w \in \Sbad(L_n)  $. 
		Note that 
		$ \ker(\psi_{\cG,n})= E(L_{n,w})_{p^\infty}/p E(L_{n,w})_{p^\infty} $
		has a finite $ \bF_p $-dimension 
		and by \cite[Theorem 3.2]{Washington2}
		\[
		\dim_{\bF_{p}}(\ker(\psi_{\cG,n}))=\dim_{\bF_{p}}(E(L_{n,w})_{p}) \leq \dim_{\bF_{p}}(E(\overline{L_{w}})_{p}) =2.
		\] 
		Passing the exact sequence \eqref{eq:step-to-define-psi} to direct limit, define the surjective map
		\begin{equation}\label{eq:define-psi-infty}
			\psi_{\cG,\infty} : H^{1}(\cG, E_p) \twoheadrightarrow H^{1}(\cG,E_{p^{\infty}})_p,
		\end{equation} 
		where $ \cG \in \{ G_\infty^{S}, G_{L_{\infty,w}} \} $ for $ w \in S(L_\infty) $ and 
		$ G_\infty^{S} := \Gal(L_\infty^{S}/L_\infty) $. 
		\begin{prop}\label{prop:psi-infty}
			Let $ \cG \in \{ G_\infty^{S}, G_{L_{\infty,w}} \} $ for $ w \in S(L_\infty) $ and let $ \psi_{\cG,\infty} $ be the map described in \eqref{eq:define-psi-infty}.
			\begin{enumerate}
				\item\label{prop:psi-infty-1} If $ \cG \in \{ G_\infty^{S}, G_{L_{\infty,w}} \} $ for $ w \in S_p(L_\infty) $, then $ \psi_{\cG,\infty} $ is an isomorphism.
				\item\label{prop:psi-infty-2} If $  \cG= G_{L_{\infty,w}} $ for  $ w \in \Sbad(L_\infty)  $, then
				$ \dim_{\bF_{p}}(\ker(\psi_{\cG,\infty})) \leq 2 $.
			\end{enumerate}
		\end{prop}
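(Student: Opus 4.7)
\medskip

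\noindent\textbf{Proof plan.} The key observation is that the kernel of $\psi_{\cG,n}$ read off from \eqref{eq:step-to-define-psi} is $E(\cL_n)_{p^{\infty}}/pE(\cL_n)_{p^{\infty}}$, where $\cL_n$ denotes $L_n$ or $L_{n,w}$ according as $\cG = G_n^S$ or $\cG = G_{L_{n,w}}$. Since passing to direct limits is exact in both the $E_{p^\infty}$ argument and along the tower $L_n \subseteq L_\infty$, the plan is to identify
\[
\ker(\psi_{\cG,\infty}) \;=\; E(\cL_\infty)_{p^\infty}\big/pE(\cL_\infty)_{p^\infty},
\]
with $\cL_\infty \in \{L_\infty, L_{\infty,w}\}$, and then bound (or vanish) this $\bF_p$-vector space.

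For part \eqref{prop:psi-infty-1}, suppose first $w \in S_p(L_\infty)$, so $w$ lies above $\fp$ or $\bfp$. Writing $L_{\infty,w}$ as the union of the local layers $K^{(l)}_r$ and applying \eqref{eq:Ep=0}, we obtain $E(L_{\infty,w})_p = 0$, hence the whole $p$-primary part $E(L_{\infty,w})_{p^\infty}$ vanishes (any nontrivial $p$-power torsion would contribute nontrivial $p$-torsion). For the global case $\cG = G_\infty^S$, I would use the injection $L_\infty \hookrightarrow L_{\infty,w}$ for some prime $w$ of $L_\infty$ above $p$, which induces $E(L_\infty)_p \hookrightarrow E(L_{\infty,w})_p = 0$ by the previous step, giving $E(L_\infty)_{p^\infty} = 0$. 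In both situations the kernel is trivial and the surjection $\psi_{\cG,\infty}$ is an isomorphism.

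For part \eqref{prop:psi-infty-2}, let $M := E(L_{\infty,w})_{p^\infty}$ with $w \in \Sbad(L_\infty)$. Since $M$ sits inside $E(\overline{L_w})_{p^\infty} \cong (\bQ_p/\bZ_p)^2$, it is a cofinitely generated $\bZ_p$-module, and so decomposes as $(\bQ_p/\bZ_p)^a \oplus F$ with $a \leq 2$ and $F$ a finite $p$-group. The standard fact
\[
\dim_{\bF_p}\!\bigl(M/pM\bigr) \;=\; \dim_{\bF_p}\!\bigl(M[p]\bigr)
\]
for such modules combines with $M[p] \subseteq E(\overline{L_w})_p$ and the bound $\dim_{\bF_p}(E(\overline{L_w})_p) = 2$ cited above from \cite[Theorem 3.2]{Washington2} to yield $\dim_{\bF_p}(\ker(\psi_{\cG,\infty})) \leq 2$, as desired.

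The argument is essentially mechanical once the kernel is identified; the only subtlety to be careful with is the commutation of the direct limits (over $n$ and over the $p$-power coefficients) with the formation of the kernel, which follows from exactness of filtered colimits. No serious obstacle is anticipated.
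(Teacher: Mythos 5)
Your argument is essentially the right one and follows the same outline the paper sketches (and which \cite{SujFil} carries out at the cyclotomic level): identify the kernel of $\psi_{\cG,\infty}$ with $E(\cL_\infty)_{p^\infty}/pE(\cL_\infty)_{p^\infty}$ via the direct limit of \eqref{eq:step-to-define-psi}, kill it at $p$-adic places using \eqref{eq:Ep=0}, and bound it at the bad places using the structure of $E(\overline{L_w})_{p^\infty}$.

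One small correction in part \eqref{prop:psi-infty-2}: the ``standard fact''
$\dim_{\bF_p}(M/pM) = \dim_{\bF_p}(M[p])$
is \emph{false} for a general cofinitely generated $\bZ_p$-module; it holds for finite $p$-groups, but already fails for $M = \bQ_p/\bZ_p$, where the left side is $0$ and the right side is $1$. Writing $M \cong (\bQ_p/\bZ_p)^a \oplus F$ with $F$ finite, one has
$\dim_{\bF_p}(M/pM) = \dim_{\bF_p}(F/pF) = \dim_{\bF_p}(F[p])$
while
$\dim_{\bF_p}(M[p]) = a + \dim_{\bF_p}(F[p])$,
so the correct relation is the inequality $\dim_{\bF_p}(M/pM) \leq \dim_{\bF_p}(M[p])$, with equality iff the divisible part vanishes. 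This is precisely the point: at a finite layer $L_{n,w}$ the group $E(L_{n,w})_{p^\infty}$ is finite and the paper's displayed equality holds, but in the limit over the unramified $\bZ_p$-extension a divisible part may appear and only the inequality survives. Your conclusion is unaffected, since the chain
$\dim_{\bF_p}(\ker\psi_{\cG,\infty}) = \dim_{\bF_p}(M/pM) \leq \dim_{\bF_p}(M[p]) \leq \dim_{\bF_p}\bigl(E(\overline{L_w})_p\bigr) = 2$
still gives the required bound.
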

		\begin{proof}
			The proof is similar to Proposition 4.1 in \cite{SujFil}.  
		\end{proof}
			Suppose $ w \in S(L_\infty)  $ and assume $ w |\fp $ (\textit{resp.} $ w|\bfp $). 
			{Denote $ w$  by $ \fq $ (\textit{resp.} by $ \bfq $) and the isomorphism  $ \psi_{\cG,\infty} $ where $ \cG = \Gal(\overline{L_{\infty,\fq}}/L_{\infty,\fq}) $ by $ \psi_{\fq,\infty} $ (\textit{resp.} $ \psi_{\bfq,\infty} $).
			}
			{Finally, when $  w \in \Sbad(L_\infty)$ then $L_{\infty,w} = L_{\cyc,w}  $ and $ \cG = \Gal(\overline{L_{\infty,w}}/L_{\infty,w}) $.
				Denote the surjective map $ \psi_{\cG,\infty} $ by $  \psi_{w,\infty}$.}
		\begin{cor}\label{cor:psi-infty-fq}
			Let $  \fq $  (\textit{resp.} $ \bfq $) be a prime in $ S_p(L_\infty) $ above $ \fp $ (\textit{resp.} $ \bfp $). Then,	 the isomorphism $ \psi_{\fq,\infty} $ (\textit{resp.} $ \psi_{\bfq,\infty}$) induces an isomorphism 
			\begin{equation*}\label{eq:define-psi-infty-fq}
				\psi_{\fq,\infty}^{\pm} : 
				H^{1}(L_{\infty,\fq}, E_p)/ \Image \left(\kappa^{\pm,p}_{L_{\infty,\fq}}\right) \xrightarrow{\cong} \left(H^{1}(L_{\infty,\fq},E_{p^{\infty}})/ \Image \left(\kappa^{\pm,p^{\infty}}_{L_{\infty,\fq}}\right)\right)_p
			\end{equation*}
			\textit{resp.}
			\begin{equation*}\label{eq:define-psi-infty-bfq}
				\psi_{\bfq,\infty}^{\pm} : 
				H^{1}(L_{\infty,\bfq}, E_p)/ \Image \left(\kappa^{\pm,p}_{L_{\infty,\bfq}}\right) \xrightarrow{\cong} \left(H^{1}(L_{\infty,\bfq},E_{p^{\infty}})/ \Image \left(\kappa^{\pm,p^{\infty}}_{L_{\infty,\bfq}}\right)\right)_p.
			\end{equation*}
		\end{cor}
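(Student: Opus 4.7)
The plan is to deduce the signed isomorphism directly from Proposition \ref{prop:psi-infty}(1) by chasing a natural commutative diagram that compares the Kummer maps at level $p$ and at level $p^\infty$, and then observing that the two ``obstructions'' to transporting $\Image \ \kappa^{\pm,p}_{L_{\infty,\fq}}$ onto $(\Image \ \kappa^{\pm,p^{\infty}}_{L_{\infty,\fq}})_p$ vanish because of \eqref{eq:Ep=0} on one side and $p$-divisibility on the other.

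First, I would record the commutative square, for $A := E^{\pm}(L_{\infty,\fq})$,
\[
\begin{tikzcd}
A/pA \arrow[r, "\kappa^{\pm,p}_{L_{\infty,\fq}}"] \arrow[d] & H^{1}(L_{\infty,\fq},E_p) \arrow[d, "\psi_{\fq,\infty}"] \\
\bigl(A\otimes \bQ_p/\bZ_p\bigr)_p \arrow[r, "\kappa^{\pm,p^{\infty}}_{L_{\infty,\fq}}"] & H^{1}(L_{\infty,\fq},E_{p^{\infty}})_p,
\end{tikzcd}
\]
where the right vertical is the isomorphism of Proposition \ref{prop:psi-infty}(1) (valid since $\fq \in S_p(L_\infty)$), the bottom Kummer map is obtained by restricting to $p$-torsion, and the left vertical is the natural map $x+pA \mapsto (x \otimes p^{-1})$. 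Commutativity is the standard functoriality of the Kummer construction with respect to the short exact sequence $0 \to E_p \to E_{p^{\infty}} \xrightarrow{p} E_{p^{\infty}} \to 0$.

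Next, I would argue that the left vertical arrow is an isomorphism. Injectivity reduces to showing $A_p = 0$, which follows from $E^{\pm}(L_{\infty,\fq}) \subseteq E(L_{\infty,\fq})$ together with \eqref{eq:Ep=0}. Surjectivity onto the $p$-torsion of $A \otimes \bQ_p/\bZ_p$ is formal once $A_p=0$. Hence $\psi_{\fq,\infty}$ carries $\Image \ \kappa^{\pm,p}_{L_{\infty,\fq}}$ isomorphically onto $\bigl(\Image \ \kappa^{\pm,p^{\infty}}_{L_{\infty,\fq}}\bigr)_p$.

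Finally, I would pass to the quotients via the snake lemma applied to
\[
0 \to \Image \ \kappa^{\pm,p^{\infty}}_{L_{\infty,\fq}} \to H^1(L_{\infty,\fq},E_{p^{\infty}}) \to H^1(L_{\infty,\fq},E_{p^{\infty}})/\Image \ \kappa^{\pm,p^{\infty}}_{L_{\infty,\fq}} \to 0
\]
after applying the functor $(-)_p$. The resulting exact sequence yields the desired isomorphism of quotients provided the connecting map into $\Image \ \kappa^{\pm,p^{\infty}}_{L_{\infty,\fq}}/p$ is zero; but $\Image \ \kappa^{\pm,p^{\infty}}_{L_{\infty,\fq}} = A \otimes \bQ_p/\bZ_p$ is $p$-divisible, hence its mod-$p$ reduction vanishes. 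Combining with Step 2 and the isomorphism $\psi_{\fq,\infty}$ produces $\psi^{\pm}_{\fq,\infty}$. The argument for $\psi^{\pm}_{\bfq,\infty}$ is identical after replacing $\fp$ by $\bfp$. The main technical point to verify carefully is the $p$-divisibility of $A \otimes \bQ_p/\bZ_p$, which kills the potential obstruction coming from the snake lemma; everything else is a diagram chase.
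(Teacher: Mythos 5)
Your argument is correct and reconstructs exactly what the paper delegates to \cite[Proposition 4.1, part d)]{SujFil}: comparing the two Kummer maps in a commutative square, using $E(L_{\infty,\fq})_p=0$ (equation \eqref{eq:Ep=0}) to make both vertical arrows isomorphisms, and then killing the snake-lemma obstruction by the $p$-divisibility of $E^{\pm}(L_{\infty,\fq})\otimes \bQ_p/\bZ_p$. This is the standard approach and matches the paper's intent.
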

		\begin{proof}
			The proof is similar to \cite[Proposition 4.1, part d)]{SujFil} .
		\end{proof}

		\subsection{Fine signed residual Selmer group and residual representations}\label{sec:fine-signed-and-residual-representations}
		{
			This section explains why the fine signed residual Selmer group only depends on the residual Galois representation $ E_p $.
			To do this, we show that the local conditions $  ^{\pm}\widetilde{K}_v(E_{p}/L_\infty)$, which define the fine signed residual Selmer group (\textit{cf.} Definition \ref{def:signed residual Selmer}), only depend on the residual Galois representation (\textit{cf.} Proposition \ref{prop:fine selmer only depends on residual reps}). 
			Suppose $ \cL$ is an algebraic extension of $ \bQ_p $.
			By Lemma 3.3 of \cite{SujFil}, there exists an exact sequence
			\[
			\begin{tikzcd}\label{exact:lemma3.3-suj-fil}
				0 \arrow[r] & \widehat{E}(\cL) \arrow[r] & E(\cL) \arrow[r] & D \arrow[r] & 0,
			\end{tikzcd}
			\]
			where $D  $ is a finite group of order prime-to-$ p $. 
			In particular, for any $ n \geq 0 $ and any prime $ w \in \Sss(L_n)=S_p(L_n) $,
			\[
			\widehat{E}^{\pm}(L_{n,w}) \ot \bQ_p/\bZ_p \cong E^{\pm}(L_{n,w}) \ot \bQ_p/\bZ_p.
			\]
			If $ w $ is a prime in $ \Sss(L_\infty) $, then
			$ \widehat{E}^{\pm}(L_{\infty,w}) \ot \bQ_p/\bZ_p \cong E^{\pm}(L_{\infty,w}) \ot \bQ_p/\bZ_p$.
			Putting this together with equation \eqref{signedkummermaps}, for all $ n \geq 0 $
			\begin{align}\label{eq:imkummermapiso}
				\Image \ \kappa^{\pm,p^{\infty}}_{L_{n,w}} (\Epinf^{\pm}(L_{n,w})) \cong 
				\Image \ \kappa^{\pm,p^{\infty}}_{L_{n,w}} (\wEpinf^{\pm}(L_{n,w})).
			\end{align}
			Furthermore, the exact sequence \eqref{exact:lemma3.3-suj-fil} induces an isomorphism for any $ n \geq 0 $ and any prime $ w \in \Sss(L_n)$,
				$ H^{1}(L_{n,w},E_{p^{\infty}}) \cong H^{1}(L_{n,w}\widehat{E}_{p^{\infty}}) $.
			The isomorphisms in the above equation and equation \eqref{eq:imkummermapiso} are compatible for all $ n \geq 0 $ and they induce the isomorphism
			\begin{align}\label{eq:H1ofEandEhatiso}
				H^{1}(L_{n,w},E_{p^{\infty}})/ \Image \ \kappa^{\pm,p^{\infty}}_{L_{n,w}}
				\cong 
				H^{1}(L_{n,w},\widehat{E}_{p^{\infty}})/\Image \ \kappa^{\pm,p^{\infty}}_{L_{n,w}}.
			\end{align}
			{Note that for $ n  \geq 0 $ and  $  w \in \Sss(L_n)  $, then $L_{n,w} =  K^{(l)}_{r} $ for some $ l,r \geq 0 $.
				Let $ \cO $ denote the ring of integers of $ K^{(l)}_{0} $.	
				By Honda theory (\textit{cf.} \cite{Honda1970}), there exists a formal group $ \cF_{\T{ss}} $ in $ \cO[[X]] $, known as the supersingular formal group of Honda type $ t^{2}+p $ which is an  Eisenstein polynomial. 
				Further, part (ii) and (iv) of \hyperref[Hyp1]{Hyp 1} implies that the formal group  $ \widehat{E} $
				is also of Honda type $ t^{2}+p  $ (\cf section 8.2 of \cite{Kobayashi}). 
				Therefore, there exists an
				$ \cO $-isomorphism 
				\begin{align}\label{eq:expologiso}
					\exp_{\widehat{E}} \circ \log_{\cF_{\T{ss}}} \ : \ \cF_{\T{ss}}(\fm) \to \widehat{E}(\fm)
				\end{align}
				where $ \fm $	is the maximal ideal of the valuation ring $ \cO $ of the local field $ L_{n,w} $. 
				Since the Honda type $ t^{2}+p $ is independent of the choice of the elliptic curve, the supersingular formal group  $ \cF_{\T{ss}} $, and hence the formal group $ \widehat{E}  $ by the above isomorphism, are  independent of the choice of elliptic curve $ E/L $, assuming $ E/L $ satisfies \hyperref[Hyp1]{Hyp 1}. }
			
			For simplicity we denote $ \cF_{\T{ss}}(\fm) $ by $ \cF_{\T{ss}}(L_{n,w}) $ and define the plus minus norm groups $ \cF_{\T{ss}}^{\pm}(L_{n,w})  \subseteq \cF_{\T{ss}}(L_{n,w})  $ using the isomorphism \eqref{eq:expologiso}
			\[
			\cF_{\T{ss}}^{\pm}(L_{n,w}):=\log_{\cF_{\T{ss}}}  \circ \exp_{\widehat{E}}(\widehat{E}^{\pm}(L_{n,w})).
			\]
			Moreover, the signed Kummer maps, defined for the formal groups in equation \eqref{signedkummermaps}, can be defined for the supersingular formal group $ \cF_{\T{ss}}(L_{n,w}) $
			\begin{small}
				\[
				\begin{tikzcd}
					0 \rightarrow 
					\cF_{\T{ss}}^{\pm}(L_{n,w})\otimes \bQ_p/\bZ_p \xrightarrow{\kappa^{\pm,p^{\infty}}_{L_{n,w}}}
					H^{1}(L_{n,w}, (\cF_{\T{ss}})_{p^{\infty}})
					\rightarrow 
					H^{1}(L_{n,w}, (\cF_{\T{ss}})_{p^{\infty}}) / \Image \ \kappa^{\pm,p^{\infty}}_{L_{n,w}}\rightarrow 
					0.
				\end{tikzcd}
				\]
			\end{small}
			
			The isomorphism \eqref{eq:expologiso} is $ \cO_{K^{(l)}_{0}} $-linear and thus it commutes with $ G_{L_{n,w}} $-action. 
			Hence, there is an isomorphism
			\begin{align}\label{eq:H1ofEhatandFssiso}
				H^{1}(L_{n,w},(\cF_{\T{ss}})_{p^{\infty}})/\Image \ \kappa^{\pm,p^{\infty}}_{L_{n,w}}
				\cong 
				H^{1}(L_{n,w},\widehat{E}_{p^{\infty}})/\Image \ \kappa^{\pm,p^{\infty}}_{L_{n,w}}.
			\end{align}
			Combining \eqref{eq:H1ofEandEhatiso} and \eqref{eq:H1ofEhatandFssiso}, for $ w \in \Sss(L_n)  $
			\begin{align}\label{eq:H1ofEandFssiso}
				H^{1}(L_{n,w},(\cF_{\T{ss}})_{p^{\infty}})/\Image \ \kappa^{\pm,p^{\infty}}_{L_{n,w}}
				\cong 
				H^{1}(L_{n,w},E_{p^{\infty}})/ \Image \ \kappa^{\pm,p^{\infty}}_{L_{n,w}}.
			\end{align}
			Since the left-hand side is independent of $ E $, so is the right-hand side. 
			This implies that $ J_\fp^{\pm}(E_{p^{\infty}}/L_n) $ and $J_\bfp^{\pm}(E_{p^{\infty}}/L_n) $ are independent of the choice of  $ E/L $ for any $ n \geq 0 $. 
			Passing to the direct limit we have the following isomorphism
			\begin{align}\label{eq:H1ofEandFssiso-infty}
				H^{1}(L_{\infty,w},(\cF_{\T{ss}})_{p^{\infty}})/\Image \ \kappa^{\pm,p^{\infty}}_{L_{\infty,w}}
				\cong 
				H^{1}(L_{\infty,w},E_{p^{\infty}})/ \Image \ \kappa^{\pm,p^{\infty}}_{L_{\infty,w}},
			\end{align}
			where $ w \in \Sss(L_\infty)=S_p(L_\infty) $ which implies that the same is true for that the local conditions  $ J_\fp^{\pm}(E_{p^{\infty}}/L_\infty) $ and $J_\bfp^{\pm}(E_{p^{\infty}}/L_\infty) $. 
		}
		%
		%
			Suppose $ E_1/L $ and $ E_2/L $ are two elliptic curves satisfying \hyperref[Hyp1]{Hyp 1}.
			Let $ j \in \{1, 2\} $ and let 
			$ 	S_j = \{\fp, \bfp \} \cup \Sbad_j  $
			{where $ \Sbad_j $ is the set of primes where $ E_j $ has bad reduction.}
			From now on, we shall now enlarge the set $ S $ by declaring
			\[
			S : = S_1 \cup S_2 = \{\fp, \bfp \} \cup \Sbad_1 \cup \Sbad_2.
			\]
			Recall from Remark \ref{rem:adding-finite-set-of-primes-plus-minus} that adding a finite set of primes to the set $ S_j $ does not change $ \cR^{\pm/\pm} ((E_j)_p/L_\infty)  $ and $ \Sel^{\pm/\pm} ((E_{j})_{p^{\infty}}/L_\infty) $. 
		
		\begin{prop}\label{prop:fine selmer only depends on residual reps}
			Let  $E_1/L $ and $ E_2/L $ be two elliptic curves that satisfy \hyperref[Hyp1]{Hyp 1}.
			Moreover, suppose  $  (E_1)_p \cong (E_2)_p $ as $ \Gal(\overline{L}/L) $-modules. 
			Then, there is an isomorphism
			\[
			\cR^{\pm/\pm} ((E_1)_p/L_\infty) \cong \cR^{\pm/\pm} ((E_2)_p/L_\infty).
			\]
		\end{prop}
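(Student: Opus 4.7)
The plan is to construct the desired isomorphism from a fixed $\Gal(\overline{L}/L)$-equivariant isomorphism $\phi\colon (E_1)_p \xrightarrow{\sim} (E_2)_p$ by assembling, place by place, the defining sequences \eqref{eq:signed residual Selmer infty with k_v pm} for the two curves into a commutative diagram of $\Omega(G)$-modules and then identifying kernels. Since $S$ has been enlarged to contain the primes of bad reduction of both $E_1$ and $E_2$, the map $\phi$ induces an isomorphism $H^1(G_\infty^S, (E_1)_p) \cong H^1(G_\infty^S, (E_2)_p)$ and, by functoriality of Galois cohomology, local isomorphisms $H^1(L_{\infty,w}, (E_1)_p) \cong H^1(L_{\infty,w}, (E_2)_p)$ at every place $w$ of $L_\infty$.

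For primes $v \in \Sbad$ the local condition ${^{\pm}\widetilde{K}_v(E_p/L_\infty)}$ is by definition a direct sum of local $H^1$ groups, so $\phi$ directly provides the vertical isomorphism and the square commutes by naturality of restriction. The real content lies at the two supersingular primes $v \in \{\fp, \bfp\}$. There I would invoke the identifications built in Section \ref{sec:fine-signed-and-residual-representations}: combining the isomorphism \eqref{eq:H1ofEandFssiso-infty} with Corollary \ref{cor:psi-infty-fq} yields
\[
{^{\pm}\widetilde{K}_v(E_p/L_\infty)} \;\cong\; \bigoplus_{w\mid v}\left(H^1(L_{\infty,w}, (\cF_{\T{ss}})_{p^\infty})/\Image\,\kappa^{\pm,p^\infty}_{L_{\infty,w}}\right)_{p},
\]
whose right-hand side depends only on the supersingular formal group $\cF_{\T{ss}}$ of Honda type $t^2+p$ and is therefore independent of any elliptic curve satisfying \hyperref[Hyp1]{Hyp 1}. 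This yields a canonical isomorphism ${^{\pm}\widetilde{K}_v((E_1)_p/L_\infty)} \cong {^{\pm}\widetilde{K}_v((E_2)_p/L_\infty)}$.

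The step I expect to be the main obstacle is verifying that the resulting restriction square at each supersingular place $w\mid v$,
\[
\begin{tikzcd}
H^1(L_{\infty,w}, (E_1)_p) \arrow[r] \arrow[d, "\phi_*"'] & H^1(L_{\infty,w}, (E_1)_p)/\Image\,\kappa^{\pm,p}_{L_{\infty,w}} \arrow[d, "\cong"] \\
H^1(L_{\infty,w}, (E_2)_p) \arrow[r] & H^1(L_{\infty,w}, (E_2)_p)/\Image\,\kappa^{\pm,p}_{L_{\infty,w}}
\end{tikzcd}
\]
commutes, where the right vertical arrow is obtained by factoring through the universal formal group $\cF_{\T{ss}}$ rather than through $\phi$. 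I would reduce this to the analogous compatibility at the $p^\infty$-level using the isomorphism $\psi_{\fq,\infty}^{\pm}$ of Corollary \ref{cor:psi-infty-fq}, and then exploit the $\cO$-linearity of $\exp_{\widehat{E}}\circ \log_{\cF_{\T{ss}}}$ in \eqref{eq:expologiso} together with the naturality of the Kummer map to transport the comparison through the formal-group identifications. Once these local compatibilities are established, a diagram chase on the resulting commutative diagram of defining sequences \eqref{eq:signed residual Selmer infty with k_v pm} yields the required isomorphism $\cR^{\pm/\pm}((E_1)_p/L_\infty) \cong \cR^{\pm/\pm}((E_2)_p/L_\infty)$.
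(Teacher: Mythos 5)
Your approach is essentially the same as the paper's: reduce the isomorphism of kernels to a place-by-place comparison of the local terms in the defining sequence \eqref{eq:signed residual Selmer infty with k_v pm}, handling the bad primes by functoriality of $H^1$ in $\phi$ and the supersingular primes via the identification with $\cF_{\T{ss}}$ built in Section \ref{sec:fine-signed-and-residual-representations} and Corollary \ref{cor:psi-infty-fq}. The paper's proof in fact does exactly this, concluding with ``the local terms match, and therefore we have the result.''

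Where you go beyond the paper is in explicitly flagging the compatibility problem at $v \in \{\fp,\bfp\}$: knowing that the source, target, and each local term are abstractly isomorphic is not enough to compare kernels — one needs the restriction squares to commute, and at the supersingular places the isomorphism of local terms comes from the formal-group comparison with $\cF_{\T{ss}}$, not directly from $\phi_*$. The paper's proof passes over this silently. Your proposed fix (reduce to the $p^\infty$-level via $\psi^{\pm}_{\fq,\infty}$, then use the $\cO$-linearity of $\exp_{\widehat{E}}\circ\log_{\cF_{\T{ss}}}$ and naturality of the Kummer map) is the right direction, but the remaining issue you would still need to address is that the composite $(\cF_{\T{ss}})_p \cong (E_1)_p \xrightarrow{\phi_{\fq}} (E_2)_p \cong (\cF_{\T{ss}})_p$ is a priori only a $G_{L_\fp}$-equivariant automorphism of $(\cF_{\T{ss}})_p$, and one must argue that any such automorphism preserves $\Image\,\kappa^{\pm,p}_{L_{\infty,\fq}}$ — for instance because it is induced by an $\cO$-endomorphism of $\cF_{\T{ss}}$ (so naturality of the trace maps in Definition \ref{def:plusminusnormgroups} applies), or because $(\cF_{\T{ss}})_p$ is absolutely irreducible as a $G_{\bQ_p}$-module so the ambiguity is a scalar. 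This last step is left as a sketch in your argument, and is not addressed in the paper either; with it supplied, the proof would actually be more complete than the one in the text.
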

		\begin{proof}
			{Let $ j \in \{1, 2\} $ and $ S $ be as above.}
			Suppose $ v = \fp $. Then, for any $ \fq | \fp  $ in $ L_{\infty} $ by Corollary \ref{cor:psi-infty-fq} and isomorphism \eqref{eq:H1ofEandFssiso-infty}
			\begin{align*}
				H^{1}(L_{\infty,\fq}, (E_j)_p)/ \Image \left(\kappa^{\pm,p}_{L_{\infty,\fq}}\right)&\cong \left(H^{1}(L_{\infty,\fq},(E_j)_{p^{\infty}})/ \Image \left(\kappa^{\pm,p^{\infty}}_{L_{\infty,\fq}}\right)\right) \\
				&\cong 
				\left(H^{1}(L_{\infty,w},(\cF_{\T{ss}})_{p^{\infty}})/\Image \ \kappa^{\pm,p^{\infty}}_{L_{\infty,w}}\right)_p.
			\end{align*}
			The right-hand side is independent of the choice $ j \in \{ 1, 2\} $. 
			Therefore,
			\[
			H^{1}(L_{\infty,\fq}, (E_1)_p)/ \Image \left(\kappa^{\pm,p}_{L_{\infty,\fq}}\right)\cong
			H^{1}(L_{\infty,\fq}, (E_2)_p)/ \Image \left(\kappa^{\pm,p}_{L_{\infty,\fq}}\right)
			\]
			which implies that 
			$
			{^{\pm}\widetilde{K}_\fp((E_1)_{p}/L_\infty)} \cong {^{\pm}\widetilde{K}_\fp((E_2)_{p}/L_\infty)}.
			$
			Similar argument works for when $ v = \bfp  $.
			Now let $ v \in S \backslash \{\fp, \bfp \}  $.
			Since $(E_1)_p \cong (E_2)_p $, it follows that for any prime $ w |v $
			\[
			H^{1}(L_{n,w}, (E_1)_{p}) \cong H^{1}(L_{n,w}, (E_2)_{p})
			\]
			and thus
			$
			^{\pm}\widetilde{K}_v((E_1)_{p}/L_n) \cong ^{\pm}\widetilde{K}_v((E_2)_{p}/L_n).
			$
			This means that for any $v \in S   $ the local terms match, and therefore we have the result.
		\end{proof}
		
		{
			\begin{prop}\label{prop:fine and p-primary of selmer have the same corank}
				Let $ E/L $ be an elliptic curve satisfying \hyperref[Hyp1]{Hyp 1}. Then, there exists an injective map 
				\begin{align*}
					\varphi^{\pm/\pm}: \cR^{\pm/\pm} (E_p/L_\infty) \hooklongrightarrow \Sel^{\pm/\pm} (E_{p^{\infty}}/L_\infty)_p
				\end{align*}
				such that the $\mathrm{coker}(\varphi^{\pm/\pm} )  $ is a cotorsion $\Omega(G) $-module. Therefore, the following equality holds
				\begin{align*}
					\mathrm{corank}_{\Omega(G)}(\cR^{\pm/\pm} (E_p/L_\infty))= \mathrm{corank}_{\Omega(G)}(\Sel^{\pm/\pm} (E_{p^{\infty}}/L_\infty)_p).
				\end{align*}
			\end{prop}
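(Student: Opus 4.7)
The plan is to apply the snake lemma to the commutative diagram obtained by comparing \eqref{eq:signed residual Selmer infty with k_v pm} with the $p$-torsion of \eqref{eq:signed Selmer infty with J_v pm}, with vertical arrows given by the surjection $\psi_{G_\infty^S,\infty}$ of \eqref{eq:define-psi-infty} in the middle and its local analogues on the right. Explicitly, applying $(-)_p$ to \eqref{eq:signed Selmer infty with J_v pm} produces
\[
0 \longrightarrow \Sel^{\pm/\pm}(E_{p^{\infty}}/L_\infty)_p \longrightarrow H^1(G_\infty^S, E_{p^{\infty}})_p \longrightarrow \bigoplus_{v \in S} J_v^{\pm}(E_{p^{\infty}}/L_\infty)_p,
\]
and the middle vertical $\psi_{G_\infty^S,\infty}$ is an isomorphism by Proposition \ref{prop:psi-infty}(\ref{prop:psi-infty-1}). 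On the right, the components at $v \in \Sss$ are the isomorphisms $\psi^{\pm}_{\fq,\infty}$ and $\psi^{\pm}_{\bfq,\infty}$ of Corollary \ref{cor:psi-infty-fq}, while for each $w \mid v \in \Sbad$ the maps $\psi_{w,\infty}$ are surjective with finite kernel of $\bF_p$-dimension at most $2$ by Proposition \ref{prop:psi-infty}(\ref{prop:psi-infty-2}).

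Since the middle vertical is an isomorphism, a standard snake-lemma diagram chase yields the desired injection
\[
\varphi^{\pm/\pm} : \cR^{\pm/\pm}(E_p/L_\infty) \hooklongrightarrow \Sel^{\pm/\pm}(E_{p^{\infty}}/L_\infty)_p,
\]
and identifies $\mathrm{coker}(\varphi^{\pm/\pm})$ with a submodule of $\bigoplus_{v \in \Sbad} \bigoplus_{w \mid v} \ker(\psi_{w,\infty})$, the contributions at supersingular primes dropping out because the corresponding local maps are already isomorphisms.

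The main obstacle is then to verify that this target is a cotorsion $\Omega(G)$-module. For each $v \in \Sbad$ one has $v \nmid p$, so $L_{\infty,w}/L_v$ is the unramified $\bZ_p$-extension and the decomposition group $D_v \subseteq G$ has $\bZ_p$-rank exactly one. By Shapiro's lemma, $\bigoplus_{w \mid v}\ker(\psi_{w,\infty})$ is identified with the $\Omega(G)$-module coinduced from the finite $\Omega(D_v)$-module $\ker(\psi_{w_0,\infty})$ at a fixed prime $w_0 \mid v$. Its Pontryagin dual is therefore finitely generated over $\Omega(D_v) \cong \bF_p[[T]]$, a ring of Krull dimension one inside $\Omega(G) \cong \bF_p[[T_1,T_2]]$, which has Krull dimension two. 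A suitable power of a topological generator of the augmentation of $D_v$ (viewed inside $\Omega(G)$) therefore annihilates the dual, making it torsion over $\Omega(G)$. Summing over the finite set $\Sbad$ gives the required cotorsion cokernel; this is the direct analogue of the cyclotomic argument in \cite{SujFil}, adapted to the $\bZ_p^2$-setting.

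The corank equality is immediate from the short exact sequence
\[
0 \longrightarrow \cR^{\pm/\pm}(E_p/L_\infty) \xrightarrow{\varphi^{\pm/\pm}} \Sel^{\pm/\pm}(E_{p^{\infty}}/L_\infty)_p \longrightarrow \mathrm{coker}(\varphi^{\pm/\pm}) \longrightarrow 0
\]
by additivity of $\Omega(G)$-corank and its vanishing on cotorsion modules.
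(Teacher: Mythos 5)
Your proof is correct and follows essentially the same strategy as the paper: build the commutative diagram comparing the defining sequences of $\cR^{\pm/\pm}(E_p/L_\infty)$ and $\Sel^{\pm/\pm}(E_{p^\infty}/L_\infty)_p$ via the maps $\psi$, use that the middle vertical arrow is an isomorphism (Proposition \ref{prop:psi-infty}) to get injectivity of $\varphi^{\pm/\pm}$ from the snake lemma, and then trap $\mathrm{coker}(\varphi^{\pm/\pm})$ inside $\bigoplus_{v\in\Sbad}\ker\bigl(\bigoplus_{w\mid v}\psi_{w,\infty}\bigr)$.

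The only place you diverge is in justifying that this last target is cotorsion. The paper phrases it via flatness: $\ker(\psi_{w,\infty})$ is finite, hence cotorsion over $\Omega(G_w)$, and $\Omega(G)$ is flat over $\Omega(G_w)$ (citing Schneider), so the induced module is cotorsion over $\Omega(G)$. You instead give a direct annihilator argument, observing that a power of the augmentation variable of $D_v$, viewed inside $\Omega(G)$, kills $\ker(\psi_{w_0,\infty})$ and hence the entire induced module and its dual. Both are valid; yours is a bit more elementary since it avoids the flatness input. One small imprecision: the sentence asserting that the Pontryagin dual of the \emph{induced} module is ``finitely generated over $\Omega(D_v)$'' is not quite right --- the dual $\Omega(G)\widehat\otimes_{\Omega(D_v)}\ker(\psi_{w_0,\infty})^\wedge$ is finitely generated over $\Omega(G)$ but not over $\Omega(D_v)$. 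What you need (and what your final sentence actually uses) is only that the original finite $\Omega(D_v)$-module is annihilated by a power of $T_1=h-1$, and this annihilation is inherited by the induced $\Omega(G)$-module by $G$-equivariance. With that phrasing fixed, the argument is complete.
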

			{
				\begin{proof}
					Let $ v \in S $ and suppose $ w |v $ is a prime in $ L_\infty $. 
					Set
					\begin{align*}
						{\varphi_{w}^{\pm}}:=
						\begin{dcases}
							\psi_{w,\infty}
							& \text{if}\ w| \fl \in \Sbad,\\
							\psi^{\pm}_{\fq,\infty}
							& \text{if}\ w=\fq| \fp,\\
							\psi^{\pm}_{\bfq,\infty}
							& \text{if}\ w=\bfq| \bfp.
						\end{dcases}
					\end{align*}
					The maps  $ \psi^{\pm}_{\fq,\infty} $ and $ \psi^{\pm}_{\bfq,\infty} $ are isomorphisms by Proposition \ref{prop:psi-infty}-\hyperref[prop:psi-infty-1]{(1)} and the $ \bF_p $-dimension of $ \ker(\psi_{w,\infty})$ is less than or equal to $ 2 $ by Proposition \ref{prop:psi-infty}-\hyperref[prop:psi-infty-2]{(2)}.
					Define the map
					\begin{align}\label{eq:varphi{v}{pm}}
						{\varphi_{v}^{\pm}} : {^{\pm}\widetilde{K}_v(E_{p}/L_\infty)} &\to \left(J_v^{\pm}(E_{p^{\infty}}/L_\infty)\right)_p\\
						\varphi_{v}^{\pm} &: = \bigoplus_{w|v} \varphi_{w}^{\pm}. \nonumber
					\end{align}
					Therefore,  $ \varphi_{v}^{\pm} $ is an isomorphism  when $ v \in \{\fp,\bfp \} $. 
					Define the map $ \varphi^{\pm/\pm} $ using the following commutative diagram (\textit{cf.} \cite[Corollary 4.5]{SujFil})
					\begin{small}
						\begin{equation}\label{eq:commutative-diagram-varphi{pm/pm}}
							\begin{tikzcd}
								0 \arrow[r] & \cR^{\pm/\pm} (E_p/L_\infty) \arrow[r] \arrow[d, "\varphi^{\pm/\pm}"']          & H^1(G_{\infty}^{S}, E_{p}) \arrow[r, "\xi_p^{\pm/\pm}"] \arrow[d, "\psi_{G_{\infty}^{S},\infty}"', "\cong"]    & \displaystyle\bigoplus_{v \in S} {^{\pm}\widetilde{K}_v(E_{p}/L_\infty)} \arrow[d, "{\ds\bigoplus_{v \in S}\varphi_{v}^{\pm}}"] \\
								0 \arrow[r] & \Sel^{\pm/\pm} (E_{p^{\infty}}/L_\infty)_p \arrow[r] & H^1(G_{\infty}^{S}, E_{p^{\infty}})_p \arrow[r, "\xi^{\pm/\pm}"'] & \displaystyle\bigoplus_{v \in S} \left(J_v^{\pm}(E_{p^{\infty}}/L_\infty)\right)_p.              	
							\end{tikzcd}
						\end{equation}	
					\end{small}
					
					The middle vertical map is an isomorphism by Proposition \ref{prop:psi-infty}-\hyperref[prop:psi-infty-1]{(1)}.  
					This implies that  $ \varphi^{\pm/\pm} $ is injective and 
					\begin{align*}
						\mathrm{coker}(\varphi^{\pm/\pm} ) \subseteq \ker(\bigoplus_{v \in S}\varphi_{v}^{\pm}) 
						= \bigoplus_{v \in \Sbad}\ker(\varphi_{v}^{\pm}).
					\end{align*}
					Suppose $ v \in \Sbad $ and fix a prime $ w|v $ in $ L_\infty $.
					Then we have
					\begin{align*}
						\ker(\varphi_{v}^{\pm}) = \mathrm{Ind}^{G}_{G_{w}}\ker(\psi_{w,\infty})
						\cong \ker(\psi_{w,\infty}) \widehat{\otimes}_{\Omega(G_{w}) } \Omega(G),
					\end{align*}
					where $ G_w = \Gal(L_{\cyc,w},L_v) \cong \bZ_p $ is the decomposition group
					of $ G $ at the prime $ w $ and $-\widehat{\otimes}_{\Omega(G_{w}) } -$ denotes the completed tensor product over $\Omega(G_{w})$.
					Recall that the map $ \psi_{w,\infty} $
					is surjective with finite kernel by Proposition \ref{prop:psi-infty}-\hyperref[prop:psi-infty-2]{(2)}.Thus $ \ker(\psi_{w,\infty}) $ is a cotorsion $\Omega(G_{w}) $-module. 
					As $\Omega(G)  $ is a flat $ \Omega(G_{w}) $-module (\textit{cf.} \cite[Lemma 3.3]{Schneider2010}), we have $\bigoplus_{v \in \Sbad}\ker(\varphi_{v}^{\pm}) $ is cotorsion as a $\Omega(G) $-module. 
				\end{proof}
			}		
			\section{Signed Selmer and fine residual Selmer groups as Iwasawa modules}\label{sec:Signed Selmer and fine residual Selmer groups as Iwasawa modules}

			Here, we prove our main theorem.
			Throughout this section, we assume that $ E/L $ is an elliptic curve satisfying the assumptions of \hyperref[Hyp1]{Hyp 1}.
			\begin{conjecture}(\cite[Conjecture 4.11]{Lei-Lim})\label{conj:X-infty-torsion}
				For any choice of the signs, 
				\newline $ \mathfrak{X}^{\pm/\pm}(E_{p^{\infty}}/L_\infty) $ is a torsion $ \Lambda(G) $-module.
			\end{conjecture}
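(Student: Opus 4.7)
The plan is to reduce the conjecture to the analogous cotorsion statement over the cyclotomic $\bZ_p$-extension $L_\cyc$ via a control theorem. Let $H := \Gal(L_\infty/L_\cyc) \cong \bZ_p$. Under \hyperref[Hyp1]{Hyp 1}, Kobayashi's signed Selmer formalism, together with its extensions to imaginary quadratic bases (through the work of Kim, Lei, B\"uy\"ukboduk--Lei, and others), is expected to yield that $\mathfrak{X}^{\pm/\pm}(E_{p^\infty}/L_\cyc)$ is a finitely generated torsion $\Lambda(\Gamma)$-module; this provides the base case for the argument.

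The core analytical step would be a control theorem for the restriction map
\[
r_H : \Sel^{\pm/\pm}(E_{p^{\infty}}/L_\cyc) \longrightarrow \Sel^{\pm/\pm}(E_{p^{\infty}}/L_\infty)^{H},
\]
obtained from the Hochschild--Serre spectral sequence applied to $H^1(-,E_{p^\infty})$ combined with a local analysis at each $v \in S$. At primes of bad reduction, the local terms are handled by standard arguments once one observes that $L_{\infty,w}/L_{\cyc,w}$ is an unramified $\bZ_p$-extension. The delicate work is at the supersingular primes $\fp$ and $\bfp$: one must verify that the images of the signed Kummer maps $\kappa^{\pm,p^{\infty}}_{L_{\cyc,\fq}}$ and $\kappa^{\pm,p^{\infty}}_{L_{\infty,\fq}}$ are compatible under restriction, relying on the explicit structure of the plus/minus norm groups of Definition \ref{def:plusminusnormgroups} together with Corollary \ref{cor:psi-infty-fq}. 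Granting this compatibility, $\ker(r_H)$ and $\mathrm{coker}(r_H)$ turn out to be cofinitely generated over $\bZ_p$, which forces $\mathfrak{X}^{\pm/\pm}(E_{p^{\infty}}/L_\infty)_H$ to be a finitely generated torsion $\Lambda(\Gamma)$-module.

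To lift torsion-ness of the coinvariants to $\Lambda(G)$-torsion-ness of $\mathfrak{X}^{\pm/\pm}(E_{p^{\infty}}/L_\infty)$ itself, I would appeal to the regular local structure of $\Lambda(G)$, which has Krull dimension three: the torsion property can be detected along a sufficient supply of height-one primes, and varying the chosen $\bZ_p$-direction $H$ inside $G$ yields enough such primes to conclude. The principal obstacle throughout this program is the non-cyclotomic half of the control theorem. Kobayashi's plus/minus machinery was built with the cyclotomic tower in mind, and propagating the compatibility of the signed local conditions along a non-cyclotomic (in particular, anticyclotomic) $\bZ_p$-direction is exactly where genuinely new input is required---which is why this statement is recorded here as a conjecture rather than a theorem.
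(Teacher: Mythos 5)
This is stated as a conjecture in the paper (quoting \cite[Conjecture 4.11]{Lei-Lim}), so there is no internal proof to compare against; the paper only records that it is known when $E$ is defined over $\bQ$ (by Lei--Sprung) and that it would imply \hyperref[Hyp2]{Hyp 2$^{\pm/\pm}$}. You correctly sense this and offer a road-map rather than a proof. However, the road-map misidentifies where the genuine difficulty sits, and this is worth flagging because the paper itself resolves the step you call the ``principal obstacle.''

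Your sketch has two ingredients: (a) a control theorem identifying $\Sel^{\pm/\pm}(E_{p^\infty}/L_\cyc)$ with $\Sel^{\pm/\pm}(E_{p^\infty}/L_\infty)^H$, and (b) a lifting step from $\Lambda(\Gamma)$-torsion of the $H$-coinvariants to $\Lambda(G)$-torsion of the full dual. Both of these are in fact carried out in the paper, unconditionally on any new input: step (a) is the diagram \eqref{eq:fundumetal diagram for Selmer}, whose vertical maps $\beta$ and $\gamma^{\pm/\pm}$ are isomorphisms by Lemmas 5.6, 5.8 and 5.10 of \cite{Lei-Sujatha} (the compatibility of the signed local conditions at $\fq\mid\fp$ along the non-cyclotomic direction, which you single out as needing ``genuinely new input,'' is precisely what those lemmas supply in this split-prime, $a_\fp=0$ setting); and step (b) is a single application of \cite[Lemma 2.6]{Hachimori2010}, a Nakayama-type statement that lifts torsion from one $H$-coinvariant quotient---your suggestion of ``varying the $\bZ_p$-direction $H$'' to harvest many height-one primes is unnecessary. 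Putting (a) and (b) together is exactly Proposition \ref{pro:Hyp 2 (cyc) implies Hyp 2}. What that argument produces is the implication
\[
\hyperref[Hyp2(cyc)]{\textbf{Hyp 2}^{\pm/\pm}\textbf{(cyc)}} \ \Longrightarrow\ \hyperref[Hyp2]{\textbf{Hyp 2}^{\pm/\pm}},
\]
not the conjecture itself. The actual content of Conjecture \ref{conj:X-infty-torsion} that remains open is your ``base case,'' the cyclotomic cotorsion $\mathfrak{X}^{\pm/\pm}(E_{p^\infty}/L_\cyc)$ being $\Lambda(\Gamma)$-torsion, which is Kobayashi's theorem over $\bQ$ but is only known over $L$ under extra hypotheses (e.g.\ $E$ defined over $\bQ$, or $\Sel(E_{p^\infty}/L)$ finite). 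So the bottleneck is not propagating the signed local conditions along the anticyclotomic direction; it is establishing the cyclotomic statement over the base field $L$ in the first place.
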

				If $ E$ is defined over $ \bQ $, then Lei and Sprung showed in \cite{Lei2020} that the above conjecture holds (\textit{cf.} proof of \cite[Theorem 4.4]{Lei2020}).
			
			\begin{enumerate}[{\textbf{Hyp 2 $ ^{\pm/\pm}  $}} :]\label{Hyp2}
				\item $ \mathfrak{X}^{\pm/\pm}(E_{p^{\infty}}/L_\infty) $ is a torsion $ \Lambda(G) $-module.
			\end{enumerate}
			
			Note that Conjecture \ref{conj:X-infty-torsion} implies \hyperref[Hyp2]{Hyp 2$ ^{\pm/\pm} $} for all choices of the signs.
			In \cite{SujFil}, to prove their main results, the authors assume (\textit{cf.} \cite[Hyp 2]{SujFil}):
			\begin{enumerate}[{\textbf{Hyp 2 $ ^{\pm/\pm} $(cyc)}} :]\label{Hyp2(cyc)}
				\item $ \mathfrak{X}^{\pm/\pm}(E_{p^{\infty}}/L_\cyc)  $ is a torsion
				$ \Lambda(\Gamma) $-module. 
			\end{enumerate}
			Here, the $ \Lambda(\Gamma) $-module $ \mathfrak{X}^{\pm/\pm}(E_{p^{\infty}}/L_\cyc)  $ denotes the Pontryagin dual of the Selmer groups $ \Sel^{\pm/\pm}(E_{p^{\infty}}/L_\cyc)  $.  
			We show later that \hyperref[Hyp2(cyc)]{Hyp 2$ ^{\pm/\pm} $(cyc)} implies \hyperref[Hyp2]{Hyp 2$ ^{\pm/\pm} $} (\textit{cf.} Proposition \ref{pro:Hyp 2 (cyc) implies Hyp 2}).
			{If $ \Sel(E_{p^{\infty}}/L) = \Sel^{\pm/\pm}(E_{p^{\infty}}/L)  $ is finite then \hyperref[Hyp2(cyc)]{Hyp 2$ ^{\pm/\pm} $(cyc)} is automatically satisfied (\textit{cf.} Remark 4.5 of \cite{Lei-Sujatha}).
			}
			\begin{prop}\label{pro:Hyp 2 equivalence}
				\hyperref[Hyp2(cyc)]{Hyp 2$ ^{\pm/\pm} $(cyc)} (\textit{resp.} \hyperref[Hyp2]{Hyp 2$ ^{\pm/\pm} $}) holds if and only if
				\begin{enumerate}
					\item $ H^{2}(G_{\cyc}^{S},E_{p^{\infty}})$ (\textit{resp.} $H^{2}(G_{\infty}^{S},E_{p^{\infty}})$) vanishes; and
					\item The map $ \xi_{\cyc}^{\pm/\pm}  $ in the exact sequence (\textit{resp.} the map $ \xi^{\pm/\pm}  $ defined in \ref{eq:signed Selmer infty with J_v pm})
					\begin{align}\label{eq:short-exact-selmer-cyc}
						0 \rightarrow 
						\Sel^{\pm/\pm} (E_{p^{\infty}}/L_\cyc) \rightarrow H^1(G_{\cyc}^{S}, E_{p^{\infty}}) \xrightarrow{\xi_{\cyc}^{\pm/\pm}}  \displaystyle\bigoplus_{v \in S} J_v^{\pm}(E_{p^{\infty}}/L_\cyc)
					\end{align}
					is surjective.
				\end{enumerate}
			\end{prop}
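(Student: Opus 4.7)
The plan is to translate the torsion condition on $\mathfrak{X}^{\pm/\pm}(E_{p^{\infty}}/L_\infty)$ (resp.\ on $\mathfrak{X}^{\pm/\pm}(E_{p^{\infty}}/L_\cyc)$) into a $\Lambda(G)$-corank identity coming from the defining sequence of the signed Selmer group, and then to upgrade the resulting corank vanishings into genuine vanishings of cohomology groups by invoking Poitou--Tate duality. I sketch the $L_\infty$ case; the cyclotomic case is identical with $\Lambda(\Gamma)$ in place of $\Lambda(G)$.

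First, I would extend the defining sequence \eqref{eq:signed Selmer infty with J_v pm} by the cokernel of $\xi^{\pm/\pm}$ to obtain the four-term exact sequence
\begin{equation*}
0 \to \Sel^{\pm/\pm}(E_{p^{\infty}}/L_\infty) \to H^1(G_\infty^S, E_{p^{\infty}}) \xrightarrow{\xi^{\pm/\pm}} \bigoplus_{v \in S} J_v^\pm(E_{p^{\infty}}/L_\infty) \to C^{\pm/\pm} \to 0,
\end{equation*}
where $C^{\pm/\pm} := \mathrm{coker}(\xi^{\pm/\pm})$. Since $\Lambda(G)$-corank is additive on short exact sequences of cofinitely generated $\Lambda(G)$-modules, Pontryagin dualizing gives
\begin{equation*}
\T{corank}_{\Lambda(G)} \mathfrak{X}^{\pm/\pm} \;=\; \T{corank}_{\Lambda(G)} H^1(G_\infty^S, E_{p^{\infty}}) \;-\; \T{corank}_{\Lambda(G)} \bigoplus_{v} J_v^\pm \;+\; \T{corank}_{\Lambda(G)} C^{\pm/\pm}.
\end{equation*}

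Next, I would compute the right-hand side termwise. By \eqref{eq:Ep=0}, $H^0(G_\infty^S, E_{p^{\infty}}) = 0$, and $G_\infty^S$ has $p$-cohomological dimension at most $2$. The global Euler--Poincaré characteristic formula of Jannsen (equivalently, Ochi--Venjakob over the $\bZ_p^2$-extension $L_\infty/L$) then yields $\T{corank}_{\Lambda(G)} H^1 - \T{corank}_{\Lambda(G)} H^2 = r_2(L)\cdot \T{corank}_{\bZ_p} E_{p^{\infty}} = 1\cdot 2 = 2$. For the local terms, primes $v \in \Sbad$ contribute corank $0$ by Shapiro's lemma, using that $L_{\infty,w}/L_v$ is the unramified $\bZ_p$-extension so that $H^1(L_{\infty,w}, E)_{p^{\infty}}$ is $\Lambda(G_w)$-cotorsion. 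The two supersingular primes $\fp$ and $\bfp$ each contribute $\Lambda(G)$-corank exactly $1$ by the plus/minus Kummer construction (the calculation underlying Proposition~\ref{prop:fine and p-primary of selmer have the same corank} in the same spirit). Summing, $\T{corank}_{\Lambda(G)} \bigoplus_v J_v^\pm = 2$, hence
\begin{equation*}
\T{corank}_{\Lambda(G)} \mathfrak{X}^{\pm/\pm} \;=\; \T{corank}_{\Lambda(G)} H^2(G_\infty^S, E_{p^{\infty}}) \;+\; \T{corank}_{\Lambda(G)} C^{\pm/\pm}.
\end{equation*}
Since both summands on the right are non-negative integers, \hyperref[Hyp2]{Hyp 2$^{\pm/\pm}$} is equivalent to both $H^2$ and $C^{\pm/\pm}$ being $\Lambda(G)$-cotorsion.

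The main obstacle, and where I would spend the most care, is upgrading "cotorsion" to honest vanishing, since the statement of the proposition demands $H^2 = 0$ and $C^{\pm/\pm} = 0$ on the nose. To do this I would appeal to the Poitou--Tate nine-term exact sequence: it identifies $C^{\pm/\pm}$, up to a piece involving $H^0(G_\infty^S, T_pE)$ which vanishes because $E(L_\infty)_p = 0$ by \eqref{eq:Ep=0}, with the kernel of the global-to-local map on $H^2$. Combined with the surjectivity of $H^2(G_\infty^S, E_{p^{\infty}})$ onto the appropriate sum of local $H^2$'s and the fact that these local $H^2$'s vanish in our setting (supersingular primes above $p$ and primes of bad reduction in the unramified $\bZ_p$-extension), cotorsion of $H^2$ forces $H^2 = 0$; once $H^2 = 0$ is established, Poitou--Tate then forces $C^{\pm/\pm} = 0$, i.e.\ surjectivity of $\xi^{\pm/\pm}$. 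The converse direction is immediate: if $H^2 = 0$ and $\xi^{\pm/\pm}$ is surjective, the displayed corank identity above forces $\T{corank}_{\Lambda(G)} \mathfrak{X}^{\pm/\pm} = 0$, which is exactly \hyperref[Hyp2]{Hyp 2$^{\pm/\pm}$}.
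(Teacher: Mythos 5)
The paper itself does not prove this proposition; it delegates to \cite[Proposition 4.4]{Lei-Sujatha} and \cite[Proposition 4.12]{Lei-Lim}. Your outline — the Cassels--Poitou--Tate sequence for the signed local conditions, combined with the $\Lambda(G)$-corank bookkeeping that reduces \hyperref[Hyp2]{Hyp~2$^{\pm/\pm}$} to the cotorsion of $H^2(G_\infty^S,E_{p^\infty})$ and of $\mathrm{coker}(\xi^{\pm/\pm})$ — is the right scaffold and matches the strategy of those cited proofs. The corank computations (global corank $2$ from Ochi--Venjakob, local corank $0$ away from $p$, corank $1$ at each of $\fp,\bar\fp$) and the vanishing of all local $H^2$'s (by Tate duality and $E(L_{n,w})_p=0$ above $p$, and $\cdp=1$ away from $p$) are correctly identified.

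The gap is the final upgrade from cotorsion to vanishing. As written, you claim ``cotorsion of $H^2$ forces $H^2=0$'' and appeal loosely to Poitou--Tate, but cotorsion alone implies nothing of the sort, and the nine-term sequence does not identify $\mathrm{coker}(\xi^{\pm/\pm})$ with a piece of $H^2$. What actually happens is that the Cassels--Poitou--Tate sequence inserts a compact Iwasawa-theoretic Selmer module, call it $\fS^{\pm/\pm}$, sitting inside $H^1_{\mathrm{Iw}}(L,T_pE)$, and one has a short exact sequence $0\to \mathrm{coker}(\xi^{\pm/\pm})\to (\fS^{\pm/\pm})^{\wedge}\to H^2(G_\infty^S,E_{p^\infty})\to 0$ once the local $H^2$'s are killed. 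Your corank identity then shows $\mathrm{corank}_{\Lambda(G)}\mathfrak{X}^{\pm/\pm}=\mathrm{rank}_{\Lambda(G)}\fS^{\pm/\pm}$, so \hyperref[Hyp2]{Hyp~2$^{\pm/\pm}$} is equivalent to $\fS^{\pm/\pm}$ being $\Lambda(G)$-torsion. The missing key lemma is that $\fS^{\pm/\pm}$ is $\Lambda(G)$-torsion-free: it is a submodule of $H^1_{\mathrm{Iw}}(L,T_pE)$, which is reflexive (hence torsion-free) by Jannsen's spectral sequence because $H^0(G_\infty^S,E_{p^\infty})=0$; this is exactly the argument the paper carries out for the analogous module $\mathscr{R}^{\pm/\pm}(E_p/L_\infty)$ in Section~\ref{Cassels--Poitou--Tate exact sequence for signed fine residual Selmer group}. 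Torsion plus torsion-free gives $\fS^{\pm/\pm}=0$, hence both $H^2$ and $\mathrm{coker}(\xi^{\pm/\pm})$ vanish. You need to state and use this torsion-freeness explicitly; without it the forward implication does not close.
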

			\begin{proof}
				{See Proposition 4.4 of \cite{Lei-Sujatha} (\textit{resp.} Proposition 4.12 of \cite{Lei-Lim}).}
			\end{proof}
			\subsection{Cassels--Poitou--Tate exact sequence for fine signed residual Selmer group}\label{Cassels--Poitou--Tate exact sequence for signed fine residual Selmer group}
			Here, we aim to produce an analogous statement to Remark \ref{pro:Hyp 2 equivalence} for  signed fine residual Selmer groups.
			More specifically, we would like to show that the Pontryagin dual of the signed fine residual Selmer group, $ \mathcal{Y}^{\pm/\pm}(E_{p}/L_\infty) $, is a torsion $ \Omega(G) $-module exactly when the exact sequence \eqref{eq:signed residual Selmer infty with k_v pm} is short exact.
			In other words, exactly when the map $ \xi_{p}^{\pm/\pm} $ is surjective.
			Our strategy is the same as \cite[Section 4.2]{SujFil}. We do this by studying the exact sequence \eqref{eq:signed residual Selmer infty with k_v pm}
			using Cassels--Poitou--Tate exact sequence (\textit{cf.} \cite[Theorem 1.5]{CoatesSujatha_book} for details).
			We begin by defining some new modules involved in the sequence.
			Let $ n \geq 0 $ and $ w \in S(L_n) $. Define
			\begin{align}\label{def:cW_w(E_p/L_n)}
				\cW^{\pm}_w(E_{p}/L_n):=
				\begin{dcases}
					0 & \text{if}\ w \in \Sbad(L_n), \\
					\Image \ \kappa^{\pm,p}_{L_{n,\fq}} & \text{if}\ w=\fq|\fp,\\
					\Image \ \kappa^{\pm,p}_{L_{n,\bfq}} & \text{if}\ w=\bfq|\bfp,
				\end{dcases}
			\end{align}
			where the choice of the sign, as usual, depends on the sign of the Kummer map $ \kappa^{\pm,p} $ in the direction of primes $ \fp $ and $  \bfp$.
			For $ v \in S $ (\textit{cf.} equation \eqref{eq:K_v pm})
			\[
			^{\pm}\widetilde{K}_v(E_{p}/L_n)=\bigoplus_{w|v} H^{1}(L_{n,w}, E_{p}) / \cW^{\pm}_w(E_{p}/L_n).
			\]
			Let $ {\cW^{\pm}_w(E_{p}/L_n)}^{\bot} \subset H^{1}(L_{n,w}, E_{p})  $ denote the orthogonal complement of the group $ {\cW^{\pm}_w(E_{p}/L_n)} $ with respect to the Tate local duality (\textit{cf.} \cite[Section 1.1]{CoatesSujatha_book}).
			Define $ \fR^{\pm/\pm} (E_p/L_n)  $, a $ \Omega(G_n) $-module, using the exact sequence 
			\begin{align*}
				0 \rightarrow \fR^{\pm/\pm} (E_p/L_n)  \rightarrow H^1(G_{n}^{S}, E_{p}) \rightarrow  \bigoplus_{w \in S(L_n)} H^{1}(L_{n,w}, E_{p}) / {\cW^{\pm}_w(E_{p}/L_n)}^{\bot}.
			\end{align*}
			{For any $ n \geq 0 $, the Cassels--Poitou--Tate exact sequence gives}
			\begin{align}\label{eq: fine selmer Cassels--Poitou--Tate exact sequence}
				0 \rightarrow \cR^{\pm/\pm} (E_p/L_n) \rightarrow H^1(G_{n}^{S}, E_{p})  \rightarrow \displaystyle\bigoplus_{w \in S(L_n)} H^{1}(L_{n,w}, E_{p}) / \cW^{\pm}_w(E_{p}/L_n) \\ \nonumber
				\rightarrow   \fR^{\pm/\pm} (E_p/L_n)\Pd \rightarrow H^2(G_{n}^{S}, E_{p}) \rightarrow \displaystyle\bigoplus_{w \in S(L_n)}  H^{2}(L_{n,w}, E_{p})  \rightarrow 0.
			\end{align}
			The final zero is due to equation \eqref{eq:Ep=0} (\textit{cf.} \cite[Theorem 1.5]{CoatesSujatha_book}).
			Define 
			\begin{align}
				\mathscr{R}^{\pm/\pm}(E_p/ L_{\infty}) := \varprojlim_{\mathrm{cores}} \fR^{\pm/\pm} (E_p/L_n) \subset H^{1}_{\mathrm{Iw}}(L,E_p) \label{eq:Iwasawacohomology}
			\end{align}
			where the Iwasawa cohomology module
				$ H^{1}_{\mathrm{Iw}}(L,E_p) := \varprojlim_{\mathrm{cores}} H^{1}(G_n^{S}, E_p) $.
			{Since  $ H^{0}(G_{n}^{S}, E_p) $ vanishes by equation \eqref{eq:Ep=0}, using Jannsen’s spectral sequence (\textit{cf.} \cite[Corollary 13]{Jannsen2014}), there is an isomorphism}
			\begin{align*}
				H^{1}_{\mathrm{Iw}}(L,E_p) \cong \Hom_{\Omega(G)}\left(\Hom_{\bF_p}(H^{1}(G^{S}_{\infty}),\bF_p),\Omega(G) \right).
			\end{align*}
			The left-hand side is a torsion-free $\Omega(G)$-module and therefore so is the right-hand side. 
			In particular,  $\mathscr{R}^{\pm/\pm}(E_p/ L_{\infty})  $ is also torsion-free being a  $\Omega(G)$-submodule of $ 	H^{1}_{\mathrm{Iw}}(L,E_p) $.
			Moreover (\textit{cf.} \cite[Lemma 4.6.]{SujFil})
			\begin{small}
				\begin{align*}
					\mathscr{R}^{\pm/\pm}(E_p/ L_{\infty})\Pd = \Hom(\varprojlim_{\mathrm{cores}} \fR^{\pm/\pm} (E_p/L_n), \bQ_p/\bZ_p)
					= \varinjlim_{\mathrm{cores}\Pd} \fR^{\pm/\pm} (E_p/L_n)\Pd.
				\end{align*}
			\end{small}
			
			Taking the direct limit of the exact sequence \eqref{eq: fine selmer Cassels--Poitou--Tate exact sequence} yields
			\begin{align}
				0\rightarrow\cR^{\pm/\pm} (E_p/L_\infty) \rightarrow H^1(G_{\infty}^{S}, E_{p}) \xrightarrow{\xi_p^{\pm/\pm}} \displaystyle\bigoplus_{v \in S} {^{\pm}\widetilde{K}_v(E_{p}/L_\infty)}\label{eq: fine selmer Cassels--Poitou--Tate exact sequence-}	\\
				\rightarrow  \mathscr{R}^{\pm/\pm}(E_p/ L_{\infty})\Pd \rightarrow H^2(G_{\infty}^{S}, E_{p}) \rightarrow \displaystyle\bigoplus_{w \in S(L_\infty)}  H^{2}(L_{\infty,w}, E_{p}) \rightarrow 0\nonumber.
			\end{align}
			Our plan now is to analyze this exact sequence.
			In  \cite{CoatesSujathaI}, the authors introduce the $  \mu = 0 $-conjecture for the fine Selmer group (\cf Conjecture A of [2]).
			Its original formulation asserts that for an elliptic curve $ E $ over any number filed $ F $, the Pontryagin dual of the classical fine Selmer group over the cyclotomic extension $ F_\cyc $ is a finitely generated $ \bZ_p $-module. 
			Here, we use an equivalent cohomological version of Conjecture A (\textit{cf.} \cite[Proposition 4.7]{SujFil}).
			\begin{enumerate}[{\textbf{Conjecture A}} :]\label{Conjecture A}
				\item Let $ \Gamma =  \Gal(L_\cyc/ L) $ and $ G_{\cyc}^{S} := \Gal(L^{S}/L_{\cyc})$. Then the $ \Omega(\Gamma) $-module $ H^2(G_{\cyc}^{S}, E_{p}) $ is trivial.
			\end{enumerate}
				%
			
			
			
			\subsection{Main theorem}\label{sec:Main theorem}
			We prove the main theorem (\textit{cf.} Theorem \ref{thm:TFAE}) of this article.
			This is an analogue of \cite[Proof of Theorem 4.12]{SujFil}. 
			\begin{lem}\label{lem:mu-inv-H1}
				Let $ v \in S $ and $ w | v $ be a prime in $ L_\infty $.
				\begin{enumerate}
					\item\label{lem:mu-inv-H1-1} If $ w \nmid p $, then the Pontryagin dual of $ H^{1}(L_{\infty,w}, E_{p^{\infty}}) $ has $ \mu $-invariant equal to zero.
					This implies that
					\begin{align*}
						\mathrm{corank}_{\Lambda(G)}H^{1}(L_{\infty,w}, E_{p^{\infty}}) =  \mathrm{corank}_{\Omega(G)}H^{1}(L_{\infty,w}, E_{p}).
					\end{align*}
					\item\label{lem:mu-inv-H1-2} Suppose \hyperref[Conjecture A]{Conjecture A} holds. Then, the Pontryagin dual of $ H^{1}(G^{S}_{\infty}, E_{p^{\infty}}) $ has $ \mu $-invariant equal to zero. This implies that
					\begin{align*}
						\mathrm{corank}_{\Lambda(G)}H^{1}(G_{\infty}^{S}, E_{p^{\infty}}) =  \mathrm{corank}_{\Omega(G)}H^{1}(G_{\infty}^{S}, E_{p}).
					\end{align*}
				\end{enumerate}
			\end{lem}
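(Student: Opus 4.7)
The plan is to reduce both parts to the same mechanism: show that $H^1(\cG, E_{p^{\infty}})$ is $p$-divisible, so that its Pontryagin dual has no $p$-torsion and hence $\mu$-invariant zero over the relevant Iwasawa algebra; the desired corank equality will then follow directly from \eqref{ineq:coranks}. The long exact sequence attached to the short exact sequence of $\cG$-modules $0 \to E_p \to E_{p^{\infty}} \xrightarrow{p} E_{p^{\infty}} \to 0$ identifies the obstruction to $p$-divisibility of $H^1(\cG, E_{p^{\infty}})$ with the connecting map $H^1(\cG, E_{p^{\infty}}) \to H^2(\cG, E_p)$, so it suffices in each case to show $H^2(\cG, E_p) = 0$.

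For part \hyperref[lem:mu-inv-H1-1]{(1)}, with $\cG = G_{L_{\infty,w}}$ and $w \nmid p$, I would prove $H^2(L_{\infty,w}, E_{p^{k}}) = 0$ for every $k \geq 1$, which simultaneously yields $H^2(L_{\infty,w}, E_p) = 0$ and, upon passing to the direct limit over $k$, $H^2(L_{\infty,w}, E_{p^{\infty}}) = 0$. Since $L_{\infty,w}$ is the unramified $\bZ_p$-extension of $L_v$, I would write $L_{\infty,w} = \bigcup_n L_v^{(n)}$ with $[L_v^{(n+1)}:L_v^{(n)}] = p$, so that $H^2(L_{\infty,w}, E_{p^{k}}) = \varinjlim_{n} H^2(L_v^{(n)}, E_{p^{k}})$. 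Local Tate duality at each finite level identifies $H^2(L_v^{(n)}, E_{p^{k}})$ with $H^0(L_v^{(n)}, E_{p^{k}}^{\ast})\Pd$, converting the direct system for $H^2$ into the dual of an inverse system for $H^0$ whose transition maps are corestrictions. Once the invariants have stabilized at the finite $p^k$-torsion group $A := (E_{p^{k}}^{\ast})^{G_{L_{\infty,w}}}$, the corestrictions become multiplication by $p$, which annihilates $A$ after at most $k$ iterations; hence the direct limit vanishes.

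For part \hyperref[lem:mu-inv-H1-2]{(2)}, with $\cG = G_\infty^{S}$, my plan is to derive $H^2(G_\infty^{S}, E_p) = 0$ from \hyperref[Conjecture A]{Conjecture A} via Hochschild--Serre followed by Nakayama's lemma. Writing $H := \Gal(L_\infty/L_\cyc) \cong \bZ_p$, which has $\cdp(H) = 1$, the Hochschild--Serre spectral sequence for $1 \to G_\infty^{S} \to G_{\cyc}^{S} \to H \to 1$ produces the exact sequence
\begin{align*}
0 \to H^1(H, H^1(G_\infty^{S}, E_p)) \to H^2(G_{\cyc}^{S}, E_p) \to H^2(G_\infty^{S}, E_p)^H \to 0.
\end{align*}
Conjecture A forces $H^2(G_\infty^{S}, E_p)^H = 0$, which upon Pontryagin duality reads $M/T_H M = 0$, where $M := H^2(G_\infty^{S}, E_p)\Pd$ and $T_H := \gamma - 1$ for a topological generator $\gamma$ of $H$. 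Using that $M$ is a finitely generated $\Omega(G)$-module (via a Jannsen-type spectral sequence argument, \cf \cite{Jannsen2014}, exploiting the finiteness of $\cdp(G_L^{S})$), and that $\Omega(G) \cong \bF_p[[T_\Gamma, T_H]]$ is a local ring whose maximal ideal contains $T_H$, Nakayama's lemma yields $M = 0$, and hence $H^2(G_\infty^{S}, E_p) = 0$ as required.

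The main subtlety I anticipate is ensuring finite generation of $M$ over the full two-dimensional Iwasawa algebra $\Omega(G)$, which is what permits Nakayama's lemma to conclude that $M/T_H M = 0$ forces $M = 0$; note that $M$ is not a priori cofinitely generated over the one-dimensional subalgebra $\Omega(H)$, so the argument must genuinely take place inside $\Omega(G)$. A secondary technical point in part (1) is verifying that local Tate duality intertwines restriction on $H^2$ with corestriction on $H^0$ compatibly along the tower $\{L_v^{(n)}\}$, so that the direct-limit calculation reduces cleanly to iteration of multiplication by $p$ on the stabilized invariant group $A$.
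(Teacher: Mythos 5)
Your proposal is correct, but it follows a genuinely different route from the paper. The paper's proof of part (1) simply cites Lemma~4.9(i) of \cite{SujFil} (recall $L_{\infty,w}=L_{\cyc,w}$ for $w\nmid p$), and its proof of part (2) proceeds by descent on $H^1$: it invokes $H^1(G_\infty^S, E_{p^\infty})^H \cong H^1(G_\cyc^S, E_{p^\infty})$ from \cite[Lemma~5.6]{Lei-Sujatha}, cites \cite[Lemma~4.9(ii)]{SujFil} to get $\mu_\Gamma=0$ at the cyclotomic level, and then applies a topological Nakayama argument (\cite{Balister1997}) to the $H$-coinvariants of the dual of $H^1$ to lift the vanishing of $\mu$ up to $\Lambda(G)$. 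You instead reduce both parts to the single mechanism that vanishing of $H^2(\cG, E_p)$ forces $p$-divisibility of $H^1(\cG, E_{p^\infty})$, hence $p$-torsion-freeness of its dual and therefore $\mu=0$; you then prove the $H^2$-vanishing directly — via local Tate duality along the unramified tower in part (1), and via the Hochschild--Serre sequence for $H=\Gal(L_\infty/L_\cyc)$ together with Conjecture~A and Nakayama in part (2). Your argument has two advantages: it is self-contained (it does not quote \cite{SujFil} for the cyclotomic $\mu$-vanishing), and it produces the statement $H^2(G_\infty^S, E_p)=0$ as an intermediate output, which the paper in fact needs separately in the proof of Proposition~\ref{prop:cR andmathscrR have the same corank} and Theorem~\ref{thm:TFAE} but attributes there only to the cyclotomic-level result \cite[Proposition~4.7]{SujFil}, so your Hochschild--Serre/Nakayama step is implicitly required by the paper anyway. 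The one detail you leave implicit is the final identification $(H^1(\cG, E_{p^\infty}))_p \cong H^1(\cG, E_p)$ (an isomorphism for $\cG=G_\infty^S$, surjective with finite kernel for $\cG=G_{L_{\infty,w}}$ with $w\nmid p$), which is needed to pass from the equality case of \eqref{ineq:coranks} to the corank equality as stated; this is exactly Proposition~\ref{prop:psi-infty}, so it is readily supplied, but should be cited.
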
  
			\begin{proof}
				When $ w \nmid p  $, then $ L_{\infty,w}$ is equal to  $L_{\cyc,w} $ and the statement that $ H^{1}(L_{\infty,w}, E_{p^{\infty}})\Pd $ has $ \mu $-invariant equal to zero is proven in \cite[Lemma 4.9, part i]{SujFil}. 
				By Proposition \ref{prop:psi-infty}, there is an isomorphism
				$ (H^{1}(G_{\infty}^{S}, E_{p^{\infty}}))_p \cong H^{1}(G_{\infty}^{S}, E_{p}) $
				and equation \eqref{ineq:coranks} implies 
				\begin{align*}
					\mathrm{corank}_{\Lambda(G)}H^{1}(L_{\infty,w}, E_{p^{\infty}}) =  \mathrm{corank}_{\Omega(G)}H^{1}(L_{\infty,w}, E_{p}).
				\end{align*}
				For part \hyperref[lem:mu-inv-H1-2]{(2)}, Lemma 5.6 in \cite{Lei-Sujatha} tells us 
					$ H^{1}(G_{\infty}^{S}, E_{p^{\infty}})^{H} \cong H^{1}(G_{\cyc}^{S}, E_{p^{\infty}}) $
				where $ H=\Gal(L_\infty,L_{\cyc})\cong \bZ_p $. Then $ \mu_\Gamma(H^{1}(G_{\cyc}^{S}, E_{p^{\infty}})\Pd)=0 $ by \cite[Lemma 4.9, part ii]{SujFil}.
				Let $ M=H^{1}(G_{\infty}^{S}, E_{p^{\infty}})\Pd $. 
				Note that $  \mu_\Gamma(M_H)$ vanishes 
				where $ M_H $ denotes
				the $ H $-coinvariant of $ \Lambda(G) $-module $ M $. 
				Then, a version of the topological Nakayama's lemma (\textit{cf.} \cite[Theorem 2, page 5]{Balister1997}) implies that $  \mu_G(M)=0$. 
				The equality about the coranks is implied by Proposition \ref{prop:psi-infty} and equation \eqref{ineq:coranks}. 
			\end{proof}
			Thus, given \hyperref[Conjecture A]{Conjecture A}, the last two modules in the exact sequence \eqref{eq: fine selmer Cassels--Poitou--Tate exact sequence-} vanish. 
			Therefore, to show equality \eqref{eq:cR andmathscrR have the same corank}, it is enough to show that 
			\begin{align*}
				\mathrm{corank}_{\Omega(G)}(H^1(G_{\infty}^{S}, E_{p})) = \mathrm{corank}_{\Omega(G)}(\displaystyle\bigoplus_{v \in S} {^{\pm}\widetilde{K}_v(E_{p}/L_\infty)}).
			\end{align*}
			\begin{prop}\label{prop:cR andmathscrR have the same corank}
				Let $ E/L $ be an elliptic curve satisfying \hyperref[Hyp1]{Hyp 1} and \hyperref[Hyp2]{Hyp 2$ ^{\pm/\pm} $}. 
				Moreover, suppose  \hyperref[Conjecture A]{Conjecture A} holds.
				Then
				\begin{align}\label{eq:cR andmathscrR have the same corank}
					\mathrm{corank}_{\Omega(G)}(\cR^{\pm/\pm} (E_p/L_\infty)) = \mathrm{corank}_{\Omega(G)}(\mathscr{R}^{\pm/\pm}(E_p/ L_{\infty})\Pd).
				\end{align}
			\end{prop}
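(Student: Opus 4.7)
The reduction spelled out immediately before the proposition already shows, under \hyperref[Conjecture A]{Conjecture A}, that \eqref{eq:cR andmathscrR have the same corank} is equivalent to the single corank identity
\begin{align*}
\mathrm{corank}_{\Omega(G)}\bigl(H^{1}(G_{\infty}^{S}, E_{p})\bigr) = \mathrm{corank}_{\Omega(G)}\Bigl(\bigoplus_{v \in S} {^{\pm}\widetilde{K}_v}(E_{p}/L_{\infty})\Bigr).
\end{align*}
My plan is to deduce this from the $p^{\infty}$-version supplied by \hyperref[Hyp2]{Hyp 2$^{\pm/\pm}$} via Proposition \ref{pro:Hyp 2 equivalence}, by descending through the snake lemma on multiplication by $p$ and translating each $p$-torsion piece back to its residual counterpart using the comparisons from Section \ref{sec:local and global cohomology calculations}.

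Concretely, Proposition \ref{pro:Hyp 2 equivalence} turns \hyperref[Hyp2]{Hyp 2$^{\pm/\pm}$} into the vanishing $H^{2}(G_{\infty}^{S}, E_{p^{\infty}}) = 0$ together with the short exact sequence
\begin{align*}
0 \longrightarrow \Sel^{\pm/\pm}(E_{p^{\infty}}/L_{\infty}) \longrightarrow H^{1}(G_{\infty}^{S}, E_{p^{\infty}}) \xrightarrow{\xi^{\pm/\pm}} \bigoplus_{v \in S} J_v^{\pm}(E_{p^{\infty}}/L_{\infty}) \longrightarrow 0.
\end{align*}
The long exact sequence associated to $0 \to E_{p} \to E_{p^{\infty}} \xrightarrow{p} E_{p^{\infty}} \to 0$ embeds $H^{1}(G_{\infty}^{S}, E_{p^{\infty}})/p$ into $H^{2}(G_{\infty}^{S}, E_{p})$, and the latter vanishes by the paragraph preceding the proposition, forcing $H^{1}(G_{\infty}^{S}, E_{p^{\infty}})/p = 0$. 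The same argument locally, using $H^{2}(L_{\infty,w}, E_{p}) = 0$ for every $w \in S(L_{\infty})$, yields $H^{1}(L_{\infty,w}, E_{p^{\infty}})/p = 0$, and a further snake-lemma step on $0 \to \Image(\kappa^{\pm,p^{\infty}}_{L_{\infty,w}}) \to H^{1}(L_{\infty,w}, E_{p^{\infty}}) \to J_v^{\pm} \to 0$ delivers $J_v^{\pm}/p = 0$ for every $v \in S$. Applying the snake lemma to multiplication by $p$ on the $\xi^{\pm/\pm}$-sequence then produces
\begin{align*}
0 \longrightarrow \Sel^{\pm/\pm}_p \longrightarrow H^{1}(G_{\infty}^{S}, E_{p^{\infty}})_p \longrightarrow \bigoplus_{v \in S} (J_v^{\pm})_p \longrightarrow \Sel^{\pm/\pm}/p \longrightarrow 0.
\end{align*}

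Taking alternating $\Omega(G)$-coranks, identifying $H^{1}(G_{\infty}^{S}, E_{p^{\infty}})_p \cong H^{1}(G_{\infty}^{S}, E_{p})$ via Proposition \ref{prop:psi-infty} and $(J_v^{\pm})_p \cong {^{\pm}\widetilde{K}_v}(E_p/L_{\infty})$ for $v \mid p$ via Corollary \ref{cor:psi-infty-fq} (with the same identification up to finite kernel at bad primes, which is harmless for $\Omega(G)$-coranks), the desired identity reduces to
\begin{align*}
\mathrm{corank}_{\Omega(G)}\bigl(\Sel^{\pm/\pm}_p\bigr) = \mathrm{corank}_{\Omega(G)}\bigl(\Sel^{\pm/\pm}/p\bigr).
\end{align*}
Since $\mathfrak{X}^{\pm/\pm}$ is a torsion $\Lambda(G)$-module by \hyperref[Hyp2]{Hyp 2$^{\pm/\pm}$}, a direct computation with the structure theorem for finitely generated $\Lambda(G)$-modules shows both sides equal $\mu_G(\mathfrak{X}^{\pm/\pm})$, completing the proof. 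The main obstacle I anticipate is the snake-lemma bookkeeping—particularly verifying $J_v^{\pm}/p = 0$ at supersingular primes despite the Kummer quotient in its definition, and confirming that the finite kernels from Proposition \ref{prop:psi-infty} at bad primes contribute zero to the $\Omega(G)$-corank; the concluding structure-theorem step is standard.
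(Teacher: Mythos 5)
Your reduction to the identity $\mathrm{corank}_{\Omega(G)}\bigl(H^{1}(G_{\infty}^{S}, E_{p})\bigr) = \mathrm{corank}_{\Omega(G)}\bigl(\bigoplus_{v \in S} {}^{\pm}\widetilde{K}_v(E_p/L_\infty)\bigr)$ matches the paper's own set-up, but from there your route and the paper's diverge substantially. The paper computes both coranks explicitly and shows each equals $[L:\bQ]=2$: the left side comes from the Ochi--Venjakob corank formula for $H^1(G_\infty^S,E_{p^\infty})$ together with Lemma \ref{lem:mu-inv-H1} (which uses Conjecture A), and the right side comes from discarding the bad-prime contributions (which are $\Omega(G)$-cotorsion) and then invoking \cite[Corollary 3.9]{Lei-Lim} to say $\bigoplus_{v\mid p} J_v^{\pm}(E_{p^\infty}/L_\infty)$ is $\Lambda(G)$-free of rank $2$, hence has vanishing $\mu$-invariant so its $p$-torsion has $\Omega(G)$-corank $2$. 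Your argument instead folds the short exact Selmer sequence (exact by Hyp~2$^{\pm/\pm}$ via Proposition \ref{pro:Hyp 2 equivalence}) into a multiplication-by-$p$ snake lemma, using Conjecture A to kill $H^2(G_\infty^S,E_p)$ (and with it $H^1(G_\infty^S,E_{p^\infty})/p$ and the local analogues, so that $J_v^{\pm}/p=0$ for all $v$), thereby reducing the whole problem to $\mathrm{rank}_{\Omega(G)}(\mathfrak{X}^{\pm/\pm}/p) = \mathrm{rank}_{\Omega(G)}(\mathfrak{X}^{\pm/\pm}[p])$, which is automatic for a torsion $\Lambda(G)$-module. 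This is a genuinely cleaner route that avoids both the Ochi--Venjakob input and the Lei--Lim freeness result; what it buys is self-containedness at the cost of a slightly longer diagram chase.

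One correction is needed in your final sentence: $\mathrm{rank}_{\Omega(G)}(\mathfrak{X}^{\pm/\pm}/p)$ is \emph{not} the $\mu$-invariant $\mu_G(\mathfrak{X}^{\pm/\pm})$. For $M=\Lambda(G)/(p^a)$ one has $\mathrm{rank}_{\Omega(G)}(M/pM)=1$ while $\mu_G(M)=a$. The statement you actually need, and the one that is true, is that
\begin{align*}
\mathrm{rank}_{\Omega(G)}(M/pM) - \mathrm{rank}_{\Omega(G)}(M[p]) = \mathrm{rank}_{\Lambda(G)}(M)
\end{align*}
for every finitely generated $\Lambda(G)$-module $M$; this difference is a pseudo-isomorphism invariant (pseudo-null modules have both quantities zero) and is easily verified on each elementary piece, giving $0$ whenever $M$ is torsion. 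So the equality you use holds, but not because each side is $\mu_G(\mathfrak{X}^{\pm/\pm})$. You should also say a word about why $H^2(L_{\infty,w},E_p)=0$ locally --- either because $L_{\infty,w}$ has $p$-cohomological dimension $1$, or because it sits at the end of the Cassels--Poitou--Tate sequence after $H^2(G_\infty^S,E_p)$, which Conjecture A kills.
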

			\begin{proof}
				By Theorem 3.2 and 4.1 of \cite{OchiVenjakov}, 
					$ \mathrm{corank}_{\Lambda(G)}(H^1(G_{\infty}^{S}, E_{p^{\infty}}))=[L:\bQ]=2. $
				Putting this together with Lemma \ref{lem:mu-inv-H1}-\hyperref[lem:mu-inv-H1-2]{(2)} we get
				\begin{align}\label{eq:coranks-the-same-1}
					\mathrm{corank}_{\Lambda(G)}(H^1(G_{\infty}^{S}, E_{p^{\infty}}))=\mathrm{corank}_{\Omega(G)}(H^1(G_{\infty}^{S}, E_{p}))=2.
				\end{align}
				For a prime  $ w \nmid p $ in $ L_\infty $ Proposition 2 in \cite{Greenberg1989} implies that
					$ \Lambda(G) $-corank of $ H^{1}(L_{\infty,w},E_{p^{\infty}}) $ is zero.
				This means that for $ v \nmid p $, the $ \Lambda(G) $-corank of $ J_v^{\pm}(E_{p^{\infty}}/L_\infty) $ is zero.
				Hence, we have
				\begin{align}\label{eq:coranks-the-same-2}
					\mathrm{corank}_{\Lambda(G)}(\displaystyle\bigoplus_{v \in S} J_v^{\pm}(E_{p^{\infty}}/L_\infty))
					=\mathrm{corank}_{\Lambda(G)}(\displaystyle\bigoplus_{v \in S_p} J_v^{\pm}(E_{p^{\infty}}/L_\infty)).
				\end{align}
				Furthermore, by Lemma \ref{lem:mu-inv-H1}-\hyperref[lem:mu-inv-H1-2]{(2)} we know
				\begin{align*}
					\mathrm{corank}_{\Lambda(G)}( H^{1}(L_{\infty,w},E_{p^{\infty}}))&=
					\mathrm{corank}_{\Omega(G)}H^{1}(L_{\infty,w}, E_{p})\\
					=\mathrm{corank}_{\Omega(G)}( ^{\pm}\widetilde{K}_v(E_{p}/L_\infty))&=0\\
					\implies						
					\mathrm{corank}_{\Omega(G)}(\displaystyle\bigoplus_{v \in S}   {}^{\pm}\widetilde{K}_v(E_{p}/L_\infty))
					&=\mathrm{corank}_{\Omega(G)}(\displaystyle\bigoplus_{v \in S_p}    {}^{\pm}\widetilde{K}_v(E_{p}/L_\infty))
				\end{align*}
				For $ v | p $, recall that the map $ \varphi_{v}^{\pm} $ (\textit{cf.} \eqref{eq:varphi{v}{pm}}) gives an isomorphism 
					between $ (J_v^{\pm}(E_{p^{\infty}}/L_\infty))_p  $ and $ {^{\pm}\widetilde{K}_v(E_{p}/L_\infty)} $.
				Therefore,
					$ \Omega(G) $-corank of $(J_v^{\pm}(E_{p^{\infty}}/L_\infty))_p
					$ 
					is equal to
					$
					\mathrm{corank}_{\Omega(G)}( {^{\pm}\widetilde{K}_v(E_{p}/L_\infty)} ). $
				Moreover, for $ v | p $ the local condition $ J_v^{\pm}(E_{p^{\infty}}/L_\infty) $ is a free $ \Lambda(G) $-module (\textit{cf.} \cite[Corollary 3.9]{Lei-Lim}) and
				\[
				\mathrm{corank}_{\Lambda(G)}(\displaystyle\bigoplus_{v \in S_p} J_v^{\pm}(E_{p^{\infty}}/L_\infty))=\mathrm{corank}_{\Lambda(G)}(H^1(G_{\infty}^{S}, E_{p^{\infty}})=2.
				\]
				This means that $ J_v^{\pm}(E_{p^{\infty}}/L_\infty) $ has $ \mu $-invariant zero. 
				By equation \eqref{ineq:coranks},
				\begin{align*}
					\mathrm{corank}_{\Lambda(G)}(J_v^{\pm}(E_{p^{\infty}}/L_\infty))
					&=
					\mathrm{corank}_{\Omega(G)}((J_v^{\pm}(E_{p^{\infty}}/L_\infty))_p)\\
					&=
					\mathrm{corank}_{\Omega(G)}( {^{\pm}\widetilde{K}_v(E_{p}/L_\infty)} ).
				\end{align*}
				This, together with equation \eqref{eq:coranks-the-same-2} gives us that
				\begin{align*}
					\mathrm{corank}_{\Omega(G)}(\displaystyle\bigoplus_{v \in S}   {}^{\pm}\widetilde{K}_v(E_{p}/L_\infty))
					=\mathrm{corank}_{\Omega(G)}(\displaystyle\bigoplus_{v \in S_p}    {}^{\pm}\widetilde{K}_v(E_{p}/L_\infty))=2.
				\end{align*}
				Comparing the above equation and equation \eqref{eq:coranks-the-same-1} yields
				\begin{align}\label{coranks-to-show-residual is surjective}
					\mathrm{corank}_{\Omega(G)}(H^1(G_{\infty}^{S}, E_{p})) = \mathrm{corank}_{\Omega(G)}(\displaystyle\bigoplus_{v \in S} {^{\pm}\widetilde{K}_v(E_{p}/L_\infty)})=2.
				\end{align}
				The exact sequence \eqref{eq: fine selmer Cassels--Poitou--Tate exact sequence-} implies that 
				\begin{align*}
					\mathrm{corank}_{\Omega(G)}(\cR^{\pm/\pm} (E_p/L_\infty)) = \mathrm{corank}_{\Omega(G)}(\mathscr{R}^{\pm/\pm}(E_p/ L_{\infty})\Pd).
				\end{align*}
			\end{proof}
			We are now ready to give our main theorem which is the analogue of \cite[Theorem 4.12]{SujFil} in our setting.
			It describes a criterion for the vanishing of the signed $ \mu $-invariants based completely on the structure of the fine singed residual Selmer groups as Iwasawa modules. 
			\begin{thm}\label{thm:TFAE}
				Let $ E/L $ be an elliptic curve satisfying \hyperref[Hyp1]{Hyp 1} and \hyperref[Hyp2]{Hyp 2$ ^{\pm/\pm} $}. 
				Furthermore, suppose  \hyperref[Conjecture A]{Conjecture A} holds.
				Then the following statements are equivalent:
				\begin{enumerate}
					\item\label{thm:TFAE-1} $\mathcal{Y}^{\pm/\pm}(E_{p}/L_\infty) = \cR^{\pm/\pm}(E_{p}/L_\infty)\Pd $ is $ \Omega(G) $-torsion.
					\item\label{thm:TFAE-2} 
					The $ \mu $-invariant $ \mu_G^{\pm/\pm}(E_{p^{\infty}}/L_\infty):=\mu_G(\mathfrak{X}^{\pm/\pm}(E_{p^{\infty}}/L_\infty)) $ vanishes.
					\item\label{thm:TFAE-3}
					The map  $ \xi_p^{\pm/\pm} $, described in diagram \eqref{eq: fine selmer Cassels--Poitou--Tate exact sequence-}, is surjective.
				\end{enumerate}
			\end{thm}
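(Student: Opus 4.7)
\emph{Plan.} The natural strategy is to establish $(1) \Leftrightarrow (2)$ and $(1) \Leftrightarrow (3)$ separately, both as short bookkeeping arguments on $\Lambda(G)$- and $\Omega(G)$-coranks once the work packaged in Propositions~\ref{prop:fine and p-primary of selmer have the same corank} and~\ref{prop:cR andmathscrR have the same corank} is in hand.

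For $(1) \Leftrightarrow (2)$, I would apply the corank inequality~\eqref{ineq:coranks} to $M = \Sel^{\pm/\pm}(E_{p^{\infty}}/L_\infty)$, recalling that equality there holds exactly when $\mu_G$ of $M\Pd = \mathfrak{X}^{\pm/\pm}(E_{p^{\infty}}/L_\infty)$ vanishes. Under \hyperref[Hyp2]{Hyp 2$^{\pm/\pm}$} the module $\mathfrak{X}^{\pm/\pm}$ is $\Lambda(G)$-torsion, so its $\Lambda(G)$-corank is zero; hence~(2) is equivalent to $\mathrm{corank}_{\Omega(G)}(\Sel^{\pm/\pm}(E_{p^{\infty}}/L_\infty)_p) = 0$. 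By Proposition~\ref{prop:fine and p-primary of selmer have the same corank} this corank coincides with $\mathrm{corank}_{\Omega(G)}(\cR^{\pm/\pm}(E_p/L_\infty))$, whose vanishing is precisely the statement that $\mathcal{Y}^{\pm/\pm}(E_p/L_\infty)$ is $\Omega(G)$-torsion, i.e.~(1).

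For $(1) \Leftrightarrow (3)$, I would return to the Cassels--Poitou--Tate sequence~\eqref{eq: fine selmer Cassels--Poitou--Tate exact sequence-}. As noted in the paragraph preceding Proposition~\ref{prop:cR andmathscrR have the same corank}, \hyperref[Conjecture A]{Conjecture A} forces the two trailing $H^2$ terms to vanish, so the sequence shortens to
\[
0 \longrightarrow \cR^{\pm/\pm}(E_p/L_\infty) \longrightarrow H^1(G_{\infty}^{S}, E_p) \xrightarrow{\xi_p^{\pm/\pm}} \bigoplus_{v \in S} {^{\pm}}\widetilde{K}_v(E_p/L_\infty) \longrightarrow \mathscr{R}^{\pm/\pm}(E_p/L_\infty)\Pd \longrightarrow 0.
\]
Thus~(3) is equivalent to $\mathscr{R}^{\pm/\pm}(E_p/L_\infty)\Pd = 0$. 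Since $\mathscr{R}^{\pm/\pm}(E_p/L_\infty)$ is a finitely generated torsion-free module over the domain $\Omega(G) \cong \bF_p[[T_1,T_2]]$ (cf.~\eqref{eq:Iwasawacohomology}), this vanishing is equivalent to $\mathrm{corank}_{\Omega(G)}(\mathscr{R}^{\pm/\pm}(E_p/L_\infty)\Pd) = 0$, which by Proposition~\ref{prop:cR andmathscrR have the same corank} is in turn equivalent to $\mathrm{corank}_{\Omega(G)}(\cR^{\pm/\pm}(E_p/L_\infty)) = 0$, i.e.~(1).

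The genuine technical content has already been absorbed into the two propositions and into the derivation of the truncated Cassels--Poitou--Tate sequence, so the argument above is essentially formal. The one point I would watch is the final passage from ``the $\Omega(G)$-corank of $\mathscr{R}^{\pm/\pm}(E_p/L_\infty)\Pd$ vanishes'' to ``$\mathscr{R}^{\pm/\pm}(E_p/L_\infty)\Pd = 0$''; without the torsion-freeness noted after~\eqref{eq:Iwasawacohomology}, this implication would hold only up to a pseudo-null discrepancy, which would break the equivalence with surjectivity of $\xi_p^{\pm/\pm}$.
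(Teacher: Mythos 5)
Your proposal is correct and follows essentially the same route as the paper: $(1)\Leftrightarrow(2)$ via the corank comparison of Proposition~\ref{prop:fine and p-primary of selmer have the same corank} together with the $\mu$-invariant criterion attached to inequality~\eqref{ineq:coranks}, and $(1)\Leftrightarrow(3)$ via the truncated Cassels--Poitou--Tate sequence, Proposition~\ref{prop:cR andmathscrR have the same corank}, and the torsion-freeness of $\mathscr{R}^{\pm/\pm}(E_p/L_\infty)$. The only cosmetic difference is that you package $(1)\Leftrightarrow(3)$ as a single chain of equivalences through the vanishing of $\mathscr{R}^{\pm/\pm}(E_p/L_\infty)\Pd$, whereas the paper proves $(1)\Rightarrow(3)$ via that vanishing and then proves $(3)\Rightarrow(1)$ separately by dualizing the resulting short exact sequence and comparing $\Omega(G)$-ranks; your caveat about torsion-freeness being what rules out a pseudo-null discrepancy is exactly the right point to flag.
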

			\begin{proof}
				To see \eqref{thm:TFAE-1} and \eqref{thm:TFAE-2} are equivalent, 
				recall from Proposition \ref{prop:fine and p-primary of selmer have the same corank} 
				\begin{small}
					\begin{align*}
						\mathrm{rank}_{\Omega(G)}(\mathcal{Y}^{\pm/\pm}(E_{p}/L_\infty))= \mathrm{rank}_{\Omega(G)}(\mathfrak{X}^{\pm/\pm}(E_{p^{\infty}}/L_\infty) / p \mathfrak{X}^{\pm/\pm}(E_{p^{\infty}}/L_\infty));\\
						0=\mathrm{rank}_{\Lambda(G)}(\mathfrak{X}^{\pm/\pm}(E_{p^{\infty}}/L_\infty)) \leq 	\mathrm{rank}_{\Omega(G)}(\mathfrak{X}^{\pm/\pm}(E_{p^{\infty}}/L_\infty) / p \mathfrak{X}^{\pm/\pm}(E_{p^{\infty}}/L_\infty)) 
					\end{align*}
				\end{small}
				with equality exactly when  $ \mu_G^{\pm/\pm}(E_{p^{\infty}}/L_\infty) $ vanishes. 
				Therefore, given \eqref{thm:TFAE-1} we have
				\begin{small}
					\begin{align*}
						\mathrm{rank}_{\Lambda(G)}(\mathfrak{X}^{\pm/\pm}(E_{p^{\infty}}/L_\infty)) = 	\mathrm{rank}_{\Omega(G)}(\mathfrak{X}^{\pm/\pm}(E_{p^{\infty}}/L_\infty) / p \mathfrak{X}^{\pm/\pm}(E_{p^{\infty}}/L_\infty)) = 0
					\end{align*}
				\end{small}
				which implies $ \mu_G^{\pm/\pm}(E_{p^{\infty}}/L_\infty) $ is zero. 
				On the other hand, if \eqref{thm:TFAE-2} holds, then
				\begin{align*}
					0=\mathrm{rank}_{\Lambda(G)}(\mathfrak{X}^{\pm/\pm}(E_{p^{\infty}}/L_\infty)) =\mathrm{rank}_{\Omega(G)}(\mathcal{Y}^{\pm/\pm}(E_{p}/L_\infty))
				\end{align*}
				and so  \eqref{thm:TFAE-1} $ \iff $ \eqref{thm:TFAE-2}.
				Now let us show \eqref{thm:TFAE-1} $ \implies$ \eqref{thm:TFAE-3}.
				The discrete $ \Omega(G) $-module $ H^2(G_{\infty}^{S}, E_{p}) $ vanishes by Proposition 4.7 \cite{SujFil}. 
				By Proposition \ref{prop:cR andmathscrR have the same corank}
				\begin{align*}
					0=\mathrm{corank}_{\Omega(G)}(\cR^{\pm/\pm} (E_p/L_\infty)) = \mathrm{corank}_{\Omega(G)}(\mathscr{R}^{\pm/\pm}(E_p/ L_{\infty})\Pd).
				\end{align*}
				This means that $ \mathscr{R}^{\pm/\pm}(E_p/ L_{\infty})\Pd $ is a cotorsion $ \Omega(G) $-module. 
				However, being a  $\Omega(G)$-submodule of a cotorsion-free module $ 	H^{1}_{\mathrm{Iw}}(L,E_p) $, the module $\mathscr{R}^{\pm/\pm}(E_p/ L_{\infty})  $ is also torsion-free (\cf equation \eqref{eq:Iwasawacohomology}). 
				As a result, $ \mathscr{R}^{\pm/\pm}(E_p/ L_{\infty})\Pd $ vanishes.
				Therefore, the map  $ \xi_p^{\pm/\pm} $ (\textit{cf.} the exact sequence \eqref{eq: fine selmer Cassels--Poitou--Tate exact sequence-}) is surjective.
				Finally to see \eqref{thm:TFAE-3} $ \implies$ \eqref{thm:TFAE-1}, suppose  the map $ \xi_p^{\pm/\pm} $ is surjective and hence we have
				\begin{equation}\label{eq:short-exact-fine-selmer-infty}
					0 \rightarrow \cR^{\pm/\pm} (E_p/L_\infty) \rightarrow H^1(G_{\infty}^{S}, E_{p}) \xrightarrow{\xi_p^{\pm/\pm}} \displaystyle\bigoplus_{v \in S} {^{\pm}\widetilde{K}_v(E_{p}/L_\infty)} \rightarrow 0.
				\end{equation}
				Taking  Pontryagin duals of the above sequence gives
				\begin{equation*}
					0 \rightarrow 
					\displaystyle\bigoplus_{v \in S} {^{\pm}\widetilde{K}_v(E_{p}/L_\infty)}\Pd
					\xrightarrow{{\xi_p^{\pm/\pm}}\Pd} 
					H^1(G_{\infty}^{S}, E_{p})\Pd  \rightarrow
					\mathcal{Y}^{\pm/\pm}(E_{p}/L_\infty) \rightarrow
					0.
				\end{equation*}
				In the proof of Proposition \ref{prop:cR andmathscrR have the same corank}, we proved that (\textit{cf.} equation \eqref{coranks-to-show-residual is surjective})
				the first two terms in the above short exact sequence have the same $ \Omega(G) $-rank which is equal to $ 2 $. 
				So, $ \mathcal{Y}^{\pm/\pm}(E_{p}/L_\infty) $ is a torsion $ \Omega(G)  $-module.
			\end{proof}
			We record the following important corollary of Theorem \ref{thm:TFAE}.
			\begin{cor}\label{cor:main}
				Let  $E_1/L $ and $ E_2/L $ be two elliptic curves that satisfy \hyperref[Hyp1]{Hyp 1} and \hyperref[Hyp2]{Hyp 2$ ^{\pm/\pm} $}.
				Suppose they have isomorphic residual Galois representations and \hyperref[Conjecture A]{Conjecture A} is satisfied for either $E_1/L $ or $ E_2/L $ (and hence both).  
				Then,  
					$ 	\mu_G^{\pm/\pm}((E_1)_{p^{\infty}}/L_\infty) =0 $ if and only if $ \mu_G^{\pm/\pm}((E_2)_{p^{\infty}}/L_\infty) =0. $
			\end{cor}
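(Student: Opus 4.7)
The plan is to deduce this corollary essentially directly from the combination of Theorem \ref{thm:TFAE} and Proposition \ref{prop:fine selmer only depends on residual reps}. The strategy is to use Theorem \ref{thm:TFAE} to translate the vanishing of the signed $\mu$-invariants into an intrinsic property of the fine signed residual Selmer groups (namely, $\Omega(G)$-torsionness of their Pontryagin duals), and then invoke Proposition \ref{prop:fine selmer only depends on residual reps} to conclude that this property is insensitive to which elliptic curve we pick within a fixed residual isomorphism class.

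In more detail, I would first verify that \hyperref[Conjecture A]{Conjecture A} transfers between $E_1$ and $E_2$. Since $(E_1)_p \cong (E_2)_p$ as $\Gal(\overline{L}/L)$-modules, we automatically obtain an isomorphism
\[
H^2(G_{\cyc}^{S}, (E_1)_p) \cong H^2(G_{\cyc}^{S}, (E_2)_p)
\]
of $\Omega(\Gamma)$-modules, so vanishing of one side is equivalent to vanishing of the other. This justifies the parenthetical remark in the statement that \hyperref[Conjecture A]{Conjecture A} for $E_1$ is equivalent to \hyperref[Conjecture A]{Conjecture A} for $E_2$, and lets us apply Theorem \ref{thm:TFAE} to \emph{both} curves simultaneously.

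The second step is to apply Theorem \ref{thm:TFAE} to each $E_j$ for $j \in \{1,2\}$: the vanishing of $\mu_G^{\pm/\pm}((E_j)_{p^{\infty}}/L_\infty)$ is equivalent to $\mathcal{Y}^{\pm/\pm}((E_j)_{p}/L_\infty)$ being $\Omega(G)$-torsion. By Proposition \ref{prop:fine selmer only depends on residual reps}, the residual assumption gives an isomorphism
\[
\cR^{\pm/\pm}((E_1)_p/L_\infty) \cong \cR^{\pm/\pm}((E_2)_p/L_\infty),
\]
and taking Pontryagin duals yields $\mathcal{Y}^{\pm/\pm}((E_1)_p/L_\infty) \cong \mathcal{Y}^{\pm/\pm}((E_2)_p/L_\infty)$ as $\Omega(G)$-modules. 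In particular, one is torsion over $\Omega(G)$ if and only if the other is, which chains back through Theorem \ref{thm:TFAE} to give the desired equivalence of $\mu$-invariants.

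There is no real obstacle here; the corollary is essentially a packaging of the two main inputs already proved. The only subtlety worth flagging in the write-up is that Proposition \ref{prop:fine selmer only depends on residual reps} requires the set $S$ to be enlarged to contain the bad primes of \emph{both} curves, and by Remark \ref{rem:adding-finite-set-of-primes-plus-minus} this enlargement does not affect either $\cR^{\pm/\pm}$ or $\Sel^{\pm/\pm}$, so all the invariants that enter Theorem \ref{thm:TFAE} are unchanged and the argument goes through cleanly.
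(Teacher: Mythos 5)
Your proposal is correct and follows essentially the same route as the paper: apply Theorem \ref{thm:TFAE} to translate vanishing of $\mu_G^{\pm/\pm}$ into $\Omega(G)$-torsionness of $\mathcal{Y}^{\pm/\pm}$, then use Proposition \ref{prop:fine selmer only depends on residual reps} to transfer that property between the two curves. You are a bit more explicit than the paper in justifying why \hyperref[Conjecture A]{Conjecture A} depends only on the residual representation and in flagging the enlargement of $S$, but these are points the paper's proof relies on implicitly, so the argument is identical in substance.
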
 
			\begin{proof}
				Without loss of generality assume $ \mu_G^{\pm/\pm}((E_1)_{p^{\infty}}/L_\infty) $  vanishes.
				{ 
					Note that \hyperref[Conjecture A]{Conjecture A} for an elliptic curve $ E/L $ only depends on the isomorphism class of residual Galois representation $ E_p $.
				}
				By Theorem \ref{thm:TFAE}, the module 
				$ \mathcal{Y}^{\pm/\pm}((E_1)_{p}/L_\infty)$ is $ \Omega(G) $-torsion. 
				Proposition \ref{prop:fine selmer only depends on residual reps} gives
				\[
				\cR^{\pm/\pm} ((E_1)_p/L_\infty) \cong \cR^{\pm/\pm} ((E_2)_p/L_\infty).
				\]
				Thus, the module $ \mathcal{Y}^{\pm/\pm}((E_2)_{p}/L_\infty)$ is also $ \Omega(G) $-torsion. 
				Again by  Theorem \ref{thm:TFAE}, this implies that $ \mu_G^{\pm/\pm}((E_2)_{p^{\infty}}/L_\infty) $ is equal to zero.
			\end{proof}
			
			\subsection{Comparison with the cyclotomic level}
			Let $ H$ denote the Galois group $\Gal(L_\infty/ L_{\cyc}) $ and consider the following commutative diagram
			\begin{small}
				\begin{equation}\label{eq:commutative-diagram-R{pm/pm}}
					\begin{tikzcd}
						0 \arrow[r] & \cR^{\pm/\pm} (E_p/L_\cyc) \arrow[r] \arrow[d, "\alpha_p^{\pm/\pm}"']          & H^1(G_{\cyc}^{S}, E_{p}) \arrow[r, "\xi_{p,\cyc}^{\pm/\pm}"] \arrow[d, "\beta_p"']    & \displaystyle\bigoplus_{v \in S} {^{\pm}\widetilde{K}_v(E_{p}/L_\cyc)} \arrow[d, "\widetilde{\gamma}^{\pm/\pm}_p:={\ds\bigoplus_{v \in S}\widetilde{\gamma}_{v}^{\pm}}"] \\
						0 \arrow[r] & \cR^{\pm/\pm} (E_p/L_\infty)^{H} \arrow[r] & H^1(G_{\infty}^{S}, E_{p})^{H} \arrow[r, "\xi_{p}^{\pm/\pm,H}"'] & \displaystyle\bigoplus_{v \in S} {^{\pm}\widetilde{K}_v(E_{p}/L_\infty)}^{H}.              	
					\end{tikzcd}
				\end{equation} 
			\end{small}

				\begin{prop}\label{prop:all vertical maps in commutative-diagram-R{pm/pm} are iso}
					All the vertical maps in the diagram \eqref{eq:commutative-diagram-R{pm/pm}} are isomorphisms. 
					In particular, 
						$ \cR^{\pm/\pm} (E_p/L_\infty)^{H}  \cong \cR^{\pm/\pm} (E_p/L_\cyc) $.
				\end{prop}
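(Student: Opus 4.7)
The plan is to deduce that $\alpha_p^{\pm/\pm}$ is an isomorphism by first proving that $\beta_p$ and $\widetilde{\gamma}_p^{\pm/\pm}$ are isomorphisms, and then applying the five lemma to the diagram \eqref{eq:commutative-diagram-R{pm/pm}}. Throughout, I will use the inflation-restriction (Hochschild--Serre) sequence for the short exact sequence $1 \to G_{\infty}^{S} \to G_{\cyc}^{S} \to H \to 1$ (and its local analogues), together with the fundamental vanishing \eqref{eq:Ep=0}.

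First, I would handle $\beta_p$. Inflation-restriction yields
\begin{equation*}
0 \to H^{1}(H, E_{p}^{G_{\infty}^{S}}) \to H^{1}(G_{\cyc}^{S}, E_{p}) \xrightarrow{\beta_p} H^{1}(G_{\infty}^{S}, E_{p})^{H} \to H^{2}(H, E_{p}^{G_{\infty}^{S}}).
\end{equation*}
Via the embedding $E(L_\infty)_p \hookrightarrow E(L_{\infty,\fq})_p$ and equation \eqref{eq:Ep=0}, we have $E_{p}^{G_{\infty}^{S}} = E(L_\infty)_p = 0$. Both outer terms therefore vanish and $\beta_p$ is an isomorphism.

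Next I would treat $\widetilde{\gamma}_{v}^{\pm}$ prime by prime. For $v \in \Sbad$ (hence $v \nmid p$), as recalled in the preliminaries, $L_{\infty,w'}$ is the unique unramified $\bZ_p$-extension of $L_v$, and the same is true for $L_{\cyc, w}$; consequently $L_{\cyc,w} = L_{\infty,w'}$, the local decomposition subgroup $H_{w'} \subseteq H$ is trivial, and $\widetilde{\gamma}_v^{\pm}$ is tautologically an isomorphism. The main obstacle lies in the case $v \in \{\fp, \bfp\}$. Here the local inflation-restriction sequence, combined with $E(L_{\infty,\fq})_p = 0$, produces an isomorphism $H^{1}(L_{\cyc,\fq}, E_p) \xrightarrow{\cong} H^{1}(L_{\infty,\fq}, E_p)^{H}$, and one must verify that it identifies $\Image\,\kappa^{\pm,p}_{L_{\cyc,\fq}}$ with $(\Image\,\kappa^{\pm,p}_{L_{\infty,\fq}})^{H}$. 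One inclusion is automatic by functoriality of Kummer maps. For the reverse inclusion, I would pass through the $\cF_{\T{ss}}$-description of section \ref{sec:fine-signed-and-residual-representations} to reduce the question to showing $(\cF_{\T{ss}}^{\pm}(L_{\infty,\fq})/p)^H = \cF_{\T{ss}}^{\pm}(L_{\cyc,\fq})/p$; the vanishing \eqref{eq:Ep=0} turns this into the statement $\cF_{\T{ss}}^{\pm}(L_{\infty,\fq})^{H} = \cF_{\T{ss}}^{\pm}(L_{\cyc,\fq})$, which is essentially Kobayashi's control property for the $\pm$-norm groups applied to the supersingular formal group of Honda type $t^{2}+p$.

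Finally, with $\beta_p$ and $\widetilde{\gamma}_{v}^{\pm}$ shown to be isomorphisms for every $v \in S$, the direct sum $\widetilde{\gamma}_{p}^{\pm/\pm}$ is an isomorphism, and an application of the five lemma to the commutative diagram \eqref{eq:commutative-diagram-R{pm/pm}} (whose rows are left-exact) forces $\alpha_p^{\pm/\pm}$ to be an isomorphism as well. The hardest step is the Kummer-image matching at primes above $p$, since this is the point where the signed local conditions interact with the invariant functor; everything else reduces to standard cohomological dimension arguments once one invokes $E(L_{\infty,\fq})_p = 0$.
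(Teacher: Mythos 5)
Your treatment of $\beta_p$ (inflation--restriction plus the vanishing \eqref{eq:Ep=0}) and of $\widetilde{\gamma}_v^{\pm}$ for $v\nmid p$ (the local decomposition group in $H$ is trivial because $L_{\infty,w'}=L_{\cyc,w}$) is correct, and the concluding four/five-lemma step is fine. The problem lies entirely in your handling of $v\in\{\fp,\bfp\}$, where the reduction you sketch leaves two cohomological vanishings unproved.

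First, even granting that the inflation--restriction isomorphism carries $\Image\,\kappa^{\pm,p}_{L_{\cyc,\fq}}$ isomorphically onto $\bigl(\Image\,\kappa^{\pm,p}_{L_{\infty,\fq}}\bigr)^{H}$, this by itself only gives the exact sequence
\begin{equation*}
0 \longrightarrow {}^{\pm}\widetilde{K}_\fp(E_{p}/L_\cyc)\xrightarrow{\ \widetilde{\gamma}_\fp^{\pm}\ } {}^{\pm}\widetilde{K}_\fp(E_{p}/L_\infty)^{H}\longrightarrow H^{1}\bigl(H,\Image\,\kappa^{\pm,p}_{L_{\infty,\fq}}\bigr),
\end{equation*}
so surjectivity of $\widetilde{\gamma}_\fp^{\pm}$ needs a further vanishing argument, which you do not supply. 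Second, your passage from ``$\bigl(\cF_{\mathrm{ss}}^{\pm}(L_{\infty,\fq})/p\bigr)^{H}=\cF_{\mathrm{ss}}^{\pm}(L_{\cyc,\fq})/p$'' to ``$\cF_{\mathrm{ss}}^{\pm}(L_{\infty,\fq})^{H}=\cF_{\mathrm{ss}}^{\pm}(L_{\cyc,\fq})$'' is not a consequence of \eqref{eq:Ep=0} alone: from $0\to M\xrightarrow{p}M\to M/p\to 0$ one gets $0\to M^{H}/p\to(M/p)^{H}\to H^{1}(H,M)_{p}$, so $(M/p)^{H}=M^{H}/p$ also requires an $H^{1}$-vanishing. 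Attributing both to ``Kobayashi's control property'' pushes exactly the difficult local content into an unproved black box, and Kobayashi's own results are over $\bQ_p$ in the cyclotomic direction only; what you need is the behaviour of the $\pm$-norm groups along the unramified direction.

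The paper sidesteps all of this by never arguing at the residual formal-group level. It sets $A:=J^{\pm}_\fp(E_{p^{\infty}}/L_{\infty})$ and composes three known isomorphisms: the $p^{\infty}$-level control theorem $A^{H}\cong J^{\pm}_\fp(E_{p^{\infty}}/L_{\cyc})$ from Lemma 5.10 of \cite{Lei-Sujatha}, the mod-$p$ comparison isomorphisms $A_p\cong {}^{\pm}\widetilde{K}_\fp(E_p/L_\infty)$ (Corollary \ref{cor:psi-infty-fq}) and ${}^{\pm}\widetilde{K}_\fp(E_p/L_\cyc)\cong\bigl(J^{\pm}_\fp(E_{p^{\infty}}/L_{\cyc})\bigr)_p$ (Proposition 4.1-d of \cite{SujFil}), and the tautology $(A^{H})_p=(A_p)^{H}$. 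This yields $\widetilde{\gamma}_\fp^{\pm}$ as an isomorphism with no fresh $H^{1}$-vanishing for norm groups to verify. If you want to pursue your more direct residual-level route, you would have to prove those two vanishings, which essentially amounts to re-deriving the cited control theorem from scratch.
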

				\begin{proof}
					The map $ \beta_p $  is an isomorphism by \cite[Lemma 5.6]{Lei-Sujatha}. 
					Let us show that the map $ \widetilde{\gamma}^{\pm/\pm}_p:={\bigoplus_{v \in S}\widetilde{\gamma}_{v}^{\pm}} $ is an isomorphism.
					{For any $ v \in S \backslash S_p $, the map $ \widetilde{\gamma}_{v}^{\pm}  $ is  the identity map (\textit{cf.} proof of Lemma 5.10 in \cite{Lei-Sujatha}).}
					Let us assume $ v  = \fp $
					and let $ A:= J^{\pm}_\fp (E_{p^{\infty}}/L_{\infty}) $.
					Note that, by Corollary \ref{cor:psi-infty-fq}, $ A_p \cong {}^{\pm}\widetilde{K}_\fp(E_{p}/L_\infty) $ and by Lemma 5.10 of \cite{Lei-Sujatha}, $ A^{H} \cong J^{\pm}_\fp (E_{p^{\infty}}/L_{\cyc}) $.
					Furthermore, by \cite[Proposition 4.1-d]{SujFil}, there is the following isomorphism 
					\begin{align*}
						\psi_{\fq,\cyc}^{\pm} :	{}^{\pm}\widetilde{K}_\fp(E_{p}/L_\cyc) 
						\xrightarrow{\cong} 
						\left(J^{\pm}_\fp (E_{p^{\infty}}/L_{\cyc})\right)_p
						.
					\end{align*}
					Using the isomorphism $ (A^{H})_p \cong (A_p)^{H} $ along with the above map, we see
					\begin{small}
						\begin{equation*} 
							\widetilde{\gamma}_{\fp}^{\pm} : {}^{\pm}\widetilde{K}_\fp(E_{p}/L_\cyc) \xRightarrow[\psi_{\fq,\cyc}^{\pm}]{\cong} \left(J^{\pm}_\fp (E_{p^{\infty}}/L_{\cyc})\right)_p \xRightarrow{\cong} (A^{H})_p \cong (A_p)^{H} \xRightarrow{\cong} {}^{\pm}\widetilde{K}_\fp(E_{p}/L_\infty)^{H}
						\end{equation*}
					\end{small}
					is an isomorphism.
					The case where $   v  = \bfp$ is similar.
					Finally, the snake lemma implies that  $ \alpha_p^{\pm/\pm} $ is also an isomorphism.
				\end{proof}
				We record the analogue of Theorem \ref{thm:TFAE} in the cyclotomic setting.
				\begin{thm}(\cite[Theorem 4.12]{SujFil})\label{thm:TFAE-cyc}
					Let $ E/L $ be an elliptic curve satisfying \hyperref[Hyp1]{Hyp 1} and \hyperref[Hyp2(cyc)]{Hyp 2$ ^{\pm/\pm} $(cyc)}. 
					Then the following are equivalent:
					\begin{enumerate}
						\item\label{thm:TFAE-1-cyc} $\mathcal{Y}^{\pm/\pm}(E_{p}/L_\cyc) = \cR^{\pm/\pm}(E_{p}/L_\cyc)\Pd $ is $ \Omega(\Gamma) $-torsion.
						\item\label{thm:TFAE-2-cyc} 
						The signed cyclic $ \mu $-invariant $ \mu_\Gamma^{\pm/\pm}(E_{p^{\infty}}/L_\cyc):=\mu_\Gamma(\mathfrak{X}^{\pm/\pm}(E_{p^{\infty}}/L_\cyc)) $ vanishes.
						\item\label{thm:TFAE-3-cyc} The map  $ \xi_{p,\cyc}^{\pm/\pm} $ in diagram \eqref{eq:commutative-diagram-R{pm/pm}} is surjective and \hyperref[Conjecture A]{Conjecture A} holds.
					\end{enumerate}
				\end{thm}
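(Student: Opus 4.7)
The plan is to mirror the strategy used to prove Theorem \ref{thm:TFAE}, with $L_\cyc$ and $\Gamma$ replacing $L_\infty$ and $G$, and $\Omega(\Gamma)$-modules in place of $\Omega(G)$-modules. The cyclotomic setting is in some ways easier (all local extensions $L_{\cyc,w}$ are $\mathbb{Z}_p$-extensions), but there is one genuine asymmetry with Theorem \ref{thm:TFAE}: \hyperref[Conjecture A]{Conjecture A} must be assumed explicitly in the third statement rather than globally.

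For \eqref{thm:TFAE-1-cyc} $\iff$ \eqref{thm:TFAE-2-cyc}, I would first prove the cyclotomic analogue of Proposition \ref{prop:fine and p-primary of selmer have the same corank}: construct a map
\[
\varphi_\cyc^{\pm/\pm} : \cR^{\pm/\pm}(E_p/L_\cyc) \hooklongrightarrow \Sel^{\pm/\pm}(E_{p^{\infty}}/L_\cyc)_p
\]
via a commutative diagram identical to \eqref{eq:commutative-diagram-varphi{pm/pm}}, using \cite[Proposition 4.1]{SujFil} (which provides the cyclotomic versions of Proposition \ref{prop:psi-infty} and Corollary \ref{cor:psi-infty-fq}). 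The cokernel sits inside a sum over $v \in \Sbad$ of induced modules from the finite-kernel local maps $\psi_{w,\cyc}$, which are cotorsion over $\Omega(\Gamma_w)$ and hence over $\Omega(\Gamma)$. Then the identification $(M_p)\Pd \cong M\Pd/pM\Pd$ combined with inequality \eqref{ineq:coranks}, applied to $M = \Sel^{\pm/\pm}(E_{p^{\infty}}/L_\cyc)$, delivers the equivalence: $\mathcal{Y}^{\pm/\pm}(E_p/L_\cyc)$ is $\Omega(\Gamma)$-torsion exactly when the $\mu$-invariant of $\mathfrak{X}^{\pm/\pm}(E_{p^\infty}/L_\cyc)$ vanishes (using \hyperref[Hyp2(cyc)]{Hyp 2$^{\pm/\pm}$(cyc)} to know the $\Lambda(\Gamma)$-rank is zero to begin with).

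For \eqref{thm:TFAE-1-cyc} $\implies$ \eqref{thm:TFAE-3-cyc}, I would form the cyclotomic Cassels--Poitou--Tate exact sequence analogous to \eqref{eq: fine selmer Cassels--Poitou--Tate exact sequence-}, namely
\[
\cdots \to H^1(G_\cyc^S, E_p) \xrightarrow{\xi_{p,\cyc}^{\pm/\pm}} \bigoplus_{v \in S} {}^{\pm}\widetilde{K}_v(E_p/L_\cyc) \to \mathscr{R}^{\pm/\pm}(E_p/L_\cyc)\Pd \to H^2(G_\cyc^S, E_p) \to \bigoplus_{w} H^2(L_{\cyc,w}, E_p) \to 0,
\]
and establish a cyclotomic corank identity paralleling Proposition \ref{prop:cR andmathscrR have the same corank}, invoking \cite[Lemma 4.9]{SujFil} directly (without the Nakayama step in Lemma \ref{lem:mu-inv-H1}\hyperref[lem:mu-inv-H1-2]{(2)}) to conclude that the $\Omega(\Gamma)$-coranks of $H^1(G_\cyc^S, E_p)$ and $\bigoplus_v {}^{\pm}\widetilde{K}_v(E_p/L_\cyc)$ agree. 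From torsion-ness of $\mathcal{Y}^{\pm/\pm}(E_p/L_\cyc)$ and the same torsion-free argument for $H^1_{\mathrm{Iw}}(L, E_p)$ via Jannsen's spectral sequence, $\mathscr{R}^{\pm/\pm}(E_p/L_\cyc)\Pd$ is both cotorsion and cotorsion-free, hence zero; the vanishing of the local $H^2$'s at primes in $\Sbad$ (they are finite while the quotient should be $\Omega(\Gamma)$-cofree) forces $H^2(G_\cyc^S, E_p) = 0$, which is \hyperref[Conjecture A]{Conjecture A}, and the surjectivity of $\xi_{p,\cyc}^{\pm/\pm}$ follows.

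For the reverse implication \eqref{thm:TFAE-3-cyc} $\implies$ \eqref{thm:TFAE-1-cyc}, surjectivity of $\xi_{p,\cyc}^{\pm/\pm}$ shortens the exact sequence, and Pontryagin-dualising gives
\[
0 \to \bigoplus_{v \in S} {}^{\pm}\widetilde{K}_v(E_p/L_\cyc)\Pd \to H^1(G_\cyc^S, E_p)\Pd \to \mathcal{Y}^{\pm/\pm}(E_p/L_\cyc) \to 0,
\]
and the corank computation above (now invoking \hyperref[Conjecture A]{Conjecture A} precisely where it is needed to kill $H^2(G_\cyc^S, E_p)$) shows the first two terms have the same $\Omega(\Gamma)$-rank, forcing the quotient to be torsion. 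The main obstacle is the bookkeeping around \hyperref[Conjecture A]{Conjecture A}: one must pinpoint that in the cyclotomic setting its role cannot be bypassed via a Nakayama-type argument from a larger extension, so it enters as a genuine hypothesis in \eqref{thm:TFAE-3-cyc}, unlike Theorem \ref{thm:TFAE} where it was assumed throughout.
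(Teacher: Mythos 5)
This theorem is cited in the paper without proof, attributed to \cite[Theorem 4.12]{SujFil}; the paper itself does not reprove it, so the comparison is really with the correctness of your reconstruction.

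Your overall skeleton matches the expected strategy, and the $\eqref{thm:TFAE-1-cyc}\iff\eqref{thm:TFAE-2-cyc}$ step is sound: the cyclotomic analogue of Proposition \ref{prop:fine and p-primary of selmer have the same corank} does not need \hyperref[Conjecture A]{Conjecture A}, and combining it with $(M_p)\Pd\cong M\Pd/pM\Pd$ and inequality \eqref{ineq:coranks} is exactly right. However, there is a genuine circularity in your $\eqref{thm:TFAE-1-cyc}\implies\eqref{thm:TFAE-3-cyc}$ argument. You propose to run the corank identity of Proposition \ref{prop:cR andmathscrR have the same corank} in the cyclotomic setting ``invoking \cite[Lemma 4.9]{SujFil} directly'' and then to \emph{derive} \hyperref[Conjecture A]{Conjecture A} from the resulting vanishing of $\mathscr{R}^{\pm/\pm}(E_p/L_\cyc)\Pd$. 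But the equality $\mathrm{corank}_{\Omega(\Gamma)}H^{1}(G_\cyc^{S},E_p)=\mathrm{corank}_{\Lambda(\Gamma)}H^{1}(G_\cyc^{S},E_{p^\infty})$ is precisely the statement that $\mu_\Gamma\bigl(H^1(G_\cyc^S,E_{p^\infty})\Pd\bigr)=0$, which (as Lemma \ref{lem:mu-inv-H1}\hyperref[lem:mu-inv-H1-2]{(2)} of this paper makes explicit by placing ``Suppose \hyperref[Conjecture A]{Conjecture A} holds'' in its hypothesis, citing \cite[Lemma 4.9(ii)]{SujFil} for exactly this step) is itself a consequence of \hyperref[Conjecture A]{Conjecture A}. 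So you would be using \hyperref[Conjecture A]{Conjecture A} to prove \hyperref[Conjecture A]{Conjecture A}.

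The repair is to obtain \hyperref[Conjecture A]{Conjecture A} first, directly from $\eqref{thm:TFAE-2-cyc}$, before touching the Cassels--Poitou--Tate sequence: the classical fine Selmer group sits inside $\Sel^{\pm/\pm}(E_{p^\infty}/L_\cyc)$, so its Pontryagin dual is a quotient of $\mathfrak{X}^{\pm/\pm}(E_{p^\infty}/L_\cyc)$; under \hyperref[Hyp2(cyc)]{Hyp 2$^{\pm/\pm}$(cyc)} and $\mu_\Gamma^{\pm/\pm}=0$, that dual is therefore a torsion $\Lambda(\Gamma)$-module with $\mu=0$, i.e.\ a finitely generated $\bZ_p$-module, which is the original formulation of \hyperref[Conjecture A]{Conjecture A}, and hence its cohomological form $H^2(G_\cyc^S,E_p)=0$. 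Only then is the corank argument legitimate, and surjectivity of $\xi_{p,\cyc}^{\pm/\pm}$ follows. (A smaller inaccuracy: the local terms $H^2(L_{\cyc,w},E_p)$ in the tail of the CPT sequence actually vanish outright, since the decomposition groups over $L_\cyc$ have $p$-cohomological dimension at most one; your parenthetical about them being ``finite while the quotient should be $\Omega(\Gamma)$-cofree'' is not the right mechanism.) With the fine-Selmer containment inserted as the bridge from $\eqref{thm:TFAE-2-cyc}$ to \hyperref[Conjecture A]{Conjecture A}, the rest of your plan, including $\eqref{thm:TFAE-3-cyc}\implies\eqref{thm:TFAE-1-cyc}$ via dualizing the now-short CPT sequence, goes through.
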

				\begin{cor}\label{cor:commutative-diagram-R{pm/pm}-iso}
					Suppose $ E/L $ satisfies \hyperref[Hyp1]{Hyp 1} and \hyperref[Hyp2(cyc)]{Hyp 2$ ^{\pm/\pm} $(cyc)}. 
					Assume one of the equivalent statements in Theorem \ref{thm:TFAE-cyc} is satisfied. Then, the top and bottom exact sequences in diagram \eqref{eq:commutative-diagram-R{pm/pm}} are short exact. 
				\end{cor}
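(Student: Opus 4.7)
The plan is to reduce both short exactness statements to Theorem \ref{thm:TFAE-cyc} together with the vertical isomorphisms of Proposition \ref{prop:all vertical maps in commutative-diagram-R{pm/pm} are iso}. The first step is to unpack the hypothesis: since the three conditions of Theorem \ref{thm:TFAE-cyc} are equivalent under \hyperref[Hyp1]{Hyp 1} and \hyperref[Hyp2(cyc)]{Hyp 2$^{\pm/\pm}$(cyc)}, assuming any one of them in particular delivers condition \eqref{thm:TFAE-3-cyc}, namely that the map $\xi_{p,\cyc}^{\pm/\pm}$ is surjective. This immediately promotes the top row of diagram \eqref{eq:commutative-diagram-R{pm/pm}} to a short exact sequence, handling the first half of the corollary.

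For the bottom row, I would perform a one-line diagram chase. Commutativity of \eqref{eq:commutative-diagram-R{pm/pm}} gives
\[
\xi_p^{\pm/\pm,H} \circ \beta_p \;=\; \widetilde{\gamma}^{\pm/\pm}_p \circ \xi_{p,\cyc}^{\pm/\pm}.
\]
By Proposition \ref{prop:all vertical maps in commutative-diagram-R{pm/pm} are iso}, both $\beta_p$ and $\widetilde{\gamma}^{\pm/\pm}_p$ are isomorphisms, and by the previous step $\xi_{p,\cyc}^{\pm/\pm}$ is surjective, so the right-hand side of the displayed equation is surjective. Since $\beta_p$ is an isomorphism, this forces $\xi_p^{\pm/\pm,H}$ to be surjective as well. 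Left exactness at the first two terms is automatic, because $\cR^{\pm/\pm}(E_p/L_\infty)^H$ sits as the kernel of $\xi_p^{\pm/\pm,H}$ after one applies the left exact functor $(-)^H$ to the defining sequence \eqref{eq:signed residual Selmer infty with k_v pm} of $\cR^{\pm/\pm}(E_p/L_\infty)$. Hence the bottom row is also short exact.

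There is essentially no obstacle here: all the substantive content has been front-loaded into Theorem \ref{thm:TFAE-cyc} and Proposition \ref{prop:all vertical maps in commutative-diagram-R{pm/pm} are iso}. The only conceptual point worth emphasising is that the hypothesis ``one of the equivalent statements'' must be promoted, via the equivalence, to the concrete surjectivity statement \eqref{thm:TFAE-3-cyc}, which is the form that plugs directly into the commutative square. Once that identification is made, the corollary is just transport of surjectivity across a commutative square whose flanking vertical maps are known to be isomorphisms.
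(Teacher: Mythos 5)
Your proof is correct and follows essentially the same route as the paper: extract surjectivity of $\xi_{p,\cyc}^{\pm/\pm}$ from Theorem \ref{thm:TFAE-cyc}\eqref{thm:TFAE-3-cyc}, then transport it across the commutative square using the vertical isomorphisms of Proposition \ref{prop:all vertical maps in commutative-diagram-R{pm/pm} are iso} to conclude $\xi_{p}^{\pm/\pm,H}$ is surjective. The only minor redundancy is the discussion of left-exactness of the bottom row, which is already built into the diagram as stated.
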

				\begin{proof}
					By Theorem \ref{thm:TFAE-cyc}, the map $\xi_{p,\cyc}^{\pm/\pm}  $ is surjective and hence the top row in diagram \eqref{eq:commutative-diagram-R{pm/pm}} is short exact. 
					Since the map $ \widetilde{\gamma}^{\pm/\pm}_p \circ \xi_{p,\cyc}^{\pm/\pm}$ is surjective, the map $ \xi_{p}^{\pm/\pm,H} $ is also surjective.
				\end{proof}
				We now combine Theorem \ref{thm:TFAE-cyc} and Proposition \ref{prop:all vertical maps in commutative-diagram-R{pm/pm} are iso}.
				\begin{thm}\label{thm:TFAE-cyc implies TFAE}	
					Suppose $ E/L $ satisfies \hyperref[Hyp1]{Hyp 1} and \hyperref[Hyp2(cyc)]{Hyp 2$ ^{\pm/\pm} $(cyc)}.
					Furthermore, assume one of the equivalent statements in Theorem \ref{thm:TFAE-cyc} is satisfied.
					Then, the equivalent statements in Theorem \ref{thm:TFAE} are satisfied. 
				\end{thm}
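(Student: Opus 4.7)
The plan is to establish statement \eqref{thm:TFAE-1} of Theorem \ref{thm:TFAE}---namely that $\mathcal{Y}^{\pm/\pm}(E_{p}/L_\infty)$ is $\Omega(G)$-torsion---by descending the corresponding cyclotomic torsion statement through $H$-coinvariants, and then to invoke the equivalences in Theorem \ref{thm:TFAE} to obtain the other statements.

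I would first check that the hypotheses of Theorem \ref{thm:TFAE} are in force. By Proposition \ref{pro:Hyp 2 (cyc) implies Hyp 2}, \hyperref[Hyp2(cyc)]{Hyp 2$^{\pm/\pm}$(cyc)} implies \hyperref[Hyp2]{Hyp 2$^{\pm/\pm}$}. Moreover, since the three statements of Theorem \ref{thm:TFAE-cyc} are equivalent, the assumed statement yields in particular statement \eqref{thm:TFAE-3-cyc}, which carries \hyperref[Conjecture A]{Conjecture A} with it. Hence both running hypotheses of Theorem \ref{thm:TFAE} are available.

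Next I would transfer torsion from the cyclotomic level. Theorem \ref{thm:TFAE-cyc} gives statement \eqref{thm:TFAE-1-cyc}, so $\mathcal{Y}^{\pm/\pm}(E_p/L_\cyc)$ is $\Omega(\Gamma)$-torsion. Proposition \ref{prop:all vertical maps in commutative-diagram-R{pm/pm} are iso} supplies the isomorphism $\cR^{\pm/\pm}(E_p/L_\infty)^{H} \cong \cR^{\pm/\pm}(E_p/L_\cyc)$; applying Pontryagin duality together with the identification $(M^{H})\Pd \cong (M\Pd)_{H}$ produces
\[
\mathcal{Y}^{\pm/\pm}(E_p/L_\infty)_{H} \;\cong\; \mathcal{Y}^{\pm/\pm}(E_p/L_\cyc),
\]
so the $H$-coinvariants of $\mathcal{Y}^{\pm/\pm}(E_p/L_\infty)$ form a torsion $\Omega(\Gamma)$-module.

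It then remains to deduce that $\mathcal{Y}^{\pm/\pm}(E_p/L_\infty)$ itself is $\Omega(G)$-torsion; this is the main technical obstacle. The claim I would prove is the following standard descent lemma: for a finitely generated $\Omega(G)$-module $M$, if $M_{H}$ is $\Omega(\Gamma)$-torsion then $M$ is $\Omega(G)$-torsion. Identifying $\Omega(G) \cong \bF_{p}[[T_1, T_2]]$ with $H$ corresponding to the variable $T_2$, localization at the height-one prime $(T_2)$ yields a discrete valuation ring whose residue field is $\Frac(\Omega(\Gamma))$, and a direct computation gives $\mathrm{rank}_{\Omega(G)}(M) \leq \dim_{\Frac(\Omega(\Gamma))}\!\bigl(M_{H}\otimes_{\Omega(\Gamma)}\Frac(\Omega(\Gamma))\bigr) = \mathrm{rank}_{\Omega(\Gamma)}(M_{H})$. (Alternatively one can argue via the topological Nakayama lemma as in the proof of Lemma \ref{lem:mu-inv-H1}-\eqref{lem:mu-inv-H1-2}.) Finite generation of $\mathcal{Y}^{\pm/\pm}(E_{p}/L_\infty)$ over $\Omega(G)$ follows because it is a quotient of the finitely generated module $H^{1}(G_{\infty}^{S},E_p)\Pd$. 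Applying this descent to $M = \mathcal{Y}^{\pm/\pm}(E_p/L_\infty)$ yields statement \eqref{thm:TFAE-1} of Theorem \ref{thm:TFAE}, and the equivalences there deliver the remaining statements \eqref{thm:TFAE-2} and \eqref{thm:TFAE-3}.
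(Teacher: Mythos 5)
Your proof follows essentially the same route as the paper: verify \hyperref[Hyp2]{Hyp 2$^{\pm/\pm}$} via Proposition \ref{pro:Hyp 2 (cyc) implies Hyp 2} and \hyperref[Conjecture A]{Conjecture A} via statement \eqref{thm:TFAE-3-cyc}, transfer torsion through the identification $\mathcal{Y}^{\pm/\pm}(E_p/L_\infty)_H \cong \mathcal{Y}^{\pm/\pm}(E_p/L_\cyc)$ coming from Proposition \ref{prop:all vertical maps in commutative-diagram-R{pm/pm} are iso} and Pontryagin duality, and then descend from torsion of the $H$-coinvariants to torsion over $\Omega(G)$. The only difference is that the paper discharges the final descent step by citing \cite[Lemma 2.6]{Hachimori2010}, whereas you spell it out with the localization-at-$(T_2)$ rank inequality (or the topological Nakayama alternative); both are valid and yield the same conclusion.
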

				\begin{proof}
					\hyperref[Conjecture A]{Conjecture A} is satisfied by part \ref{thm:TFAE-3-cyc} of Theorem \ref{thm:TFAE-cyc}.
					Proposition \ref{pro:Hyp 2 (cyc) implies Hyp 2} implies that \hyperref[Hyp2]{Hyp 2$ ^{\pm/\pm} $} holds.
					Using Proposition \ref{prop:all vertical maps in commutative-diagram-R{pm/pm} are iso}
					\[
					\mathcal{Y}^{\pm/\pm}(E_{p}/L_\cyc) =\cR^{\pm/\pm}(E_{p}/L_\cyc)\Pd\cong (\cR^{\pm/\pm}(E_{p}/L_\infty)^{H})\Pd \cong \mathcal{Y}^{\pm/\pm}(E_{p}/L_\infty)_H.
					\]
					Suppose $ \mathcal{Y}^{\pm/\pm}(E_{p}/L_\cyc) $ which is isomorphic to $  \mathcal{Y}^{\pm/\pm}(E_{p}/L_\infty)_H $ is torsion as a $ \Omega(\Gamma) $-module. 
					Then, we can apply \cite[Lemma 2.6]{Hachimori2010} to get that the module $ \mathcal{Y}^{\pm/\pm}(E_{p}/L_\infty) $ is torsion as a $ \Omega(G) $-module. 	
				\end{proof}
				\begin{cor}\label{cor:H1(R)=0}
					With the same assumption as Theorem \ref{thm:TFAE-cyc implies TFAE}, 
					\begin{align*}
						H^{1}(H,\cR^{\pm/\pm} (E_p/L_\infty))=0.
					\end{align*}
				\end{cor}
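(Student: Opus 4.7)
The plan is to exploit the short exact sequence available by virtue of the hypotheses of Theorem \ref{thm:TFAE-cyc implies TFAE}, and then to couple its long exact $H$-cohomology sequence with the Hochschild--Serre spectral sequence for $1 \to G_\infty^{S} \to G_\cyc^{S} \to H \to 1$. By Theorem \ref{thm:TFAE-cyc implies TFAE}, the equivalent statements of Theorem \ref{thm:TFAE} all hold; in particular part \eqref{thm:TFAE-3} gives the short exact sequence of $H$-modules
\[
0 \longrightarrow \cR^{\pm/\pm}(E_p/L_\infty) \longrightarrow H^{1}(G_\infty^{S}, E_p) \xrightarrow{\xi_p^{\pm/\pm}} \bigoplus_{v \in S} {}^{\pm}\widetilde{K}_v(E_p/L_\infty) \longrightarrow 0.
\]

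First I would take $H$-cohomology of this sequence. By Corollary \ref{cor:commutative-diagram-R{pm/pm}-iso}, the bottom row of diagram \eqref{eq:commutative-diagram-R{pm/pm}} is short exact, which means precisely that the induced map $H^{1}(G_\infty^{S}, E_p)^{H} \to (\bigoplus_{v} {}^{\pm}\widetilde{K}_v(E_p/L_\infty))^{H}$ is surjective. Consequently the connecting homomorphism into $H^{1}(H, \cR^{\pm/\pm}(E_p/L_\infty))$ vanishes, and the long exact sequence yields an injection
\[
H^{1}(H, \cR^{\pm/\pm}(E_p/L_\infty)) \hooklongrightarrow H^{1}(H, H^{1}(G_\infty^{S}, E_p)).
\]
It thus suffices to prove the target vanishes.

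Next, I would invoke the Hochschild--Serre spectral sequence $E_2^{i,j} = H^{i}(H, H^{j}(G_\infty^{S}, E_p)) \Rightarrow H^{i+j}(G_\cyc^{S}, E_p)$. Since $H \cong \bZ_p$ has $p$-cohomological dimension one and $E_p$ is a $p$-primary module, $E_2^{i,j} = 0$ for all $i \geq 2$. Every differential touching $E_2^{1,1}$ therefore lands in or emanates from a zero group, so $E_\infty^{1,1} = E_2^{1,1}$, and similarly $E_\infty^{2,0} = 0$. The abutment filtration on $H^{2}(G_\cyc^{S}, E_p)$ collapses to a short exact sequence
\[
0 \longrightarrow H^{1}(H, H^{1}(G_\infty^{S}, E_p)) \longrightarrow H^{2}(G_\cyc^{S}, E_p) \longrightarrow H^{0}(H, H^{2}(G_\infty^{S}, E_p)) \longrightarrow 0.
\]
Finally, the assumed equivalent statement of Theorem \ref{thm:TFAE-cyc} forces \hyperref[Conjecture A]{Conjecture A} by part \eqref{thm:TFAE-3-cyc} of that theorem, so $H^{2}(G_\cyc^{S}, E_p) = 0$. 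Combining with the previous injection closes the argument.

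There is no serious obstacle; the only point demanding a little care is verifying the collapse of the Hochschild--Serre spectral sequence, namely that the cohomological dimension bound on $H$ both kills $E_2^{2,0}$ (ensuring $H^{1}(H, H^{1})$ sits as a submodule of $H^{2}(G_\cyc^{S}, E_p)$ rather than merely a subquotient) and also kills the target $E_2^{3,0}$ of the outgoing differential from $E_2^{1,1}$.
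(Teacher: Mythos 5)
Your proof is correct and shares the paper's starting point: take the long exact $H$-cohomology sequence of the short exact sequence supplied by Theorem~\ref{thm:TFAE-cyc implies TFAE} and feed in the surjectivity of $\xi_p^{\pm/\pm,H}$ from Corollary~\ref{cor:commutative-diagram-R{pm/pm}-iso}. However, you supply a step that the paper's own proof leaves tacit. The paper asserts that the long exact sequence ``yields $\Coker(\xi_p^{\pm/\pm,H})$ is equal to $H^1(H,\cR^{\pm/\pm}(E_p/L_\infty))$,'' but reading off the sequence one sees that this cokernel is a priori only the \emph{kernel} of $H^1(H,\cR^{\pm/\pm}(E_p/L_\infty))\to H^1\bigl(H,H^1(G_\infty^S,E_p)\bigr)$; the asserted equality requires knowing $H^1\bigl(H,H^1(G_\infty^S,E_p)\bigr)=0$, which the paper does not establish in the proof. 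Your Hochschild--Serre computation is exactly what closes this gap: the degeneration at $E_2$ forced by $\cdp(H)=1$ realizes $H^1\bigl(H,H^1(G_\infty^S,E_p)\bigr)$ as a submodule of $H^2(G_\cyc^S,E_p)$, which vanishes by Conjecture~A, available here via part~\eqref{thm:TFAE-3-cyc} of Theorem~\ref{thm:TFAE-cyc}. Your phrasing (surjectivity kills the connecting map, so $H^1(H,\cR^{\pm/\pm}(E_p/L_\infty))$ injects into $H^1\bigl(H,H^1(G_\infty^S,E_p)\bigr)$) is equivalent to the paper's once that auxiliary vanishing is in hand, but yours is the version that is actually self-contained; and the spectral-sequence bookkeeping you flag at the end, namely that $E_2^{2,0}=0$ is needed to land $E_\infty^{1,1}$ as a genuine submodule of $H^2(G_\cyc^S,E_p)$, is precisely the right point to be careful about.
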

				\begin{proof}
					By Theorem \ref{thm:TFAE-cyc implies TFAE}, we have the short exact sequence \eqref{eq:short-exact-fine-selmer-infty}.
					Taking the long exact $ H $-Galois cohomology of this sequence yields 
					$  \Coker(\xi_{p}^{\pm/\pm,H})$ is equal to $ H^{1}(H,\cR^{\pm/\pm} (E_p/L_\infty)) .$
					By Corollary \ref{cor:commutative-diagram-R{pm/pm}-iso}, the map $ \xi_{p}^{\pm/\pm,H} $ is surjective which proves the claim.
				\end{proof}
				\begin{cor}\label{cor:TFAE-cyc implied by TFAE}
					Suppose $ E/L $ is an elliptic curve satisfying \hyperref[Hyp1]{Hyp 1} and \hyperref[Hyp2(cyc)]{Hyp 2$ ^{\pm/\pm} $(cyc)}. 
					Furthermore, assume the $ \Omega(\Gamma) $-module $ H^{1}(H,\cR^{\pm/\pm} (E_p/L_\infty)) $ vanishes. 
					Then, the converse of Theorem \ref{thm:TFAE-cyc implies TFAE} holds too.
					That is, all the statements in Theorem \ref{thm:TFAE-cyc} and Theorem \ref{thm:TFAE} are equivalent.
				\end{cor}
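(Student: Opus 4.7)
The plan is to establish the reverse implication of Theorem \ref{thm:TFAE-cyc implies TFAE}: under the added hypothesis that $H^{1}(H,\cR^{\pm/\pm}(E_p/L_\infty))$ vanishes, I want to derive the statements of Theorem \ref{thm:TFAE-cyc} from those of Theorem \ref{thm:TFAE}. First, Proposition \ref{pro:Hyp 2 (cyc) implies Hyp 2} guarantees that \hyperref[Hyp2(cyc)]{Hyp 2$^{\pm/\pm}$(cyc)} implies \hyperref[Hyp2]{Hyp 2$^{\pm/\pm}$}, and \hyperref[Conjecture A]{Conjecture A} is available as a standing hypothesis of Theorem \ref{thm:TFAE}, so both theorems may be freely invoked.

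Since the three statements of Theorem \ref{thm:TFAE} are already mutually equivalent under its hypotheses, I may assume without loss of generality that statement \ref{thm:TFAE-3} holds, that is, $\xi_p^{\pm/\pm}$ is surjective and the short exact sequence \eqref{eq:short-exact-fine-selmer-infty} is at my disposal. Taking the long exact sequence in $H$-cohomology produces
\[
0 \to \cR^{\pm/\pm}(E_p/L_\infty)^{H} \to H^{1}(G_{\infty}^{S},E_{p})^{H} \xrightarrow{\xi_{p}^{\pm/\pm,H}} \Bigl(\bigoplus_{v\in S} {}^{\pm}\widetilde{K}_v(E_{p}/L_\infty)\Bigr)^{H} \to H^{1}(H,\cR^{\pm/\pm}(E_p/L_\infty)),
\]
and the vanishing hypothesis on the rightmost term promotes $\xi_{p}^{\pm/\pm,H}$ to a surjection. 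I then invoke Proposition \ref{prop:all vertical maps in commutative-diagram-R{pm/pm} are iso}, which identifies all three vertical arrows in diagram \eqref{eq:commutative-diagram-R{pm/pm}} as isomorphisms; commutativity of the rightmost square, together with $\widetilde{\gamma}^{\pm/\pm}_p$ being an isomorphism, then forces $\xi_{p,\cyc}^{\pm/\pm}$ to be surjective. Combined with \hyperref[Conjecture A]{Conjecture A}, this is exactly statement \ref{thm:TFAE-3-cyc} of Theorem \ref{thm:TFAE-cyc}, which through that theorem yields all three cyclotomic statements. Together with Theorem \ref{thm:TFAE-cyc implies TFAE}, this closes the equivalence of all six statements across the two theorems.

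There is essentially no algebraic obstruction once one notices that the hypothesis $H^{1}(H,\cR^{\pm/\pm}(E_p/L_\infty))=0$ has been custom-built to annihilate the sole potential obstacle, namely the connecting map in the $H$-cohomology long exact sequence. Everything else reduces to a one-line diagram chase using the vertical isomorphisms already supplied by Proposition \ref{prop:all vertical maps in commutative-diagram-R{pm/pm} are iso}. The genuine mathematical difficulty, which lies outside the scope of this corollary, would be to produce intrinsic sufficient conditions ensuring this $H^{1}$-vanishing, analogous to the situation of Corollary \ref{cor:H1(R)=0} where the vanishing is obtained as a consequence of the opposite implication rather than imposed as an axiom.
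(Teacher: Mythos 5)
Your argument is correct and follows the same route as the paper. You assume statement \eqref{thm:TFAE-3} of Theorem~\ref{thm:TFAE}, use the vanishing of $H^{1}(H,\cR^{\pm/\pm}(E_p/L_\infty))$ in the long exact $H$-cohomology sequence of \eqref{eq:short-exact-fine-selmer-infty} to promote $\xi_p^{\pm/\pm,H}$ to a surjection, and then deduce surjectivity of $\xi_{p,\cyc}^{\pm/\pm}$ from the commutative diagram \eqref{eq:commutative-diagram-R{pm/pm}} via Proposition~\ref{prop:all vertical maps in commutative-diagram-R{pm/pm} are iso}, which is exactly how the paper argues (the paper phrases the last step as surjectivity of $\xi_p^{\pm/\pm,H}\circ\beta_p$, but it is the same diagram chase).
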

				\begin{proof}
					Suppose the map  $ \xi_p^{\pm/\pm} $ is surjective and \hyperref[Conjecture A]{Conjecture A} holds. 
					Since $ H^{1}(H,\cR^{\pm/\pm} (E_p/L_\infty)) $ vanishes, a similar argument as in  Corollary \ref{cor:H1(R)=0} shows that the map $ \xi_{p}^{\pm/\pm,H} $ is surjective.
					Using diagram \eqref{eq:commutative-diagram-R{pm/pm}}, the map $\xi_{p}^{\pm/\pm,H} \circ \beta_p  $ is surjective, and hence that the map $ \xi_{p,\cyc}^{\pm/\pm} $ is also surjective. 
				\end{proof}
					Theorem \ref{thm:TFAE-cyc implies TFAE} implies that if $ E/L $ satisfies  \hyperref[Hyp2(cyc)]{Hyp 2$ ^{\pm/\pm} $(cyc)}, then 
					\begin{align}\label{eq:mu-cyc-implies-mu-infty}
						\mu_\Gamma^{\pm/\pm}(E_{p^{\infty}}/L_\cyc) = 0 \implies  \mu_G^{\pm/\pm}(E_{p^{\infty}}/L_\infty)=0.
					\end{align}
					Moreover, if $ H^{1}(H,\cR^{\pm/\pm} (E_p/L_\infty)) $ vanishes and  \hyperref[Conjecture A]{Conjecture A} holds then the two sides of the statement \eqref{eq:mu-cyc-implies-mu-infty} become equivalent by Corollary \ref{cor:TFAE-cyc implied by TFAE}.
				\section{Pseudo-null submodules}\label{sec:Pseudo-null submodules}
				Let $ E/L $ be an elliptic curve satisfying \hyperref[Hyp1]{Hyp 1}.
				We show that under some mild assumptions, 
				the $ \Lambda(G) $-modules the $ \Lambda(G) $-modules $ \mathfrak{X}^{\pm/\pm}(E_{p^{\infty}}/L_\infty) $ and $ \mathfrak{X}(E_{p^{\infty}}/L_\infty) $ have no non-trivial pseudo-null submodules 	(\cf Definition 5.1.4 of \cite{Neukirch} for the definition of pseudo-null submodules).

				
				{The Iwasawa algebra $\Lambda(G)$ is a Noetherian regular local commutative ring and hence a Cohen--Macaulay ring. 
					In particular, the depth of $ \Lambda(G) $ coincides with its Krull dimension (\textit{cf.} Chapter 17 of \cite{Eisenbud1995}.). 
					By the Auslander--Buchsbaum--Serre theorem, the global dimension of $ \Lambda(G) $ is finite and it coincides with its Krull dimension.}
			Let $ H$ be $\Gal(L_\infty,L_{\cyc}) $ and consider the following diagram of $ \Lambda(\Gamma) $-modules:
			\begin{small}
				\begin{equation}\label{eq:fundumetal diagram for Selmer}
					\begin{tikzcd}
						0 \arrow[r] &  \Sel^{\pm/\pm} (E_{p^{\infty}}/L_\cyc) \arrow[r] \arrow[d, "\alpha^{\pm/\pm}"', "\cong"]          & H^1(G_{\cyc}^{S}, E_{p^{\infty}}) \arrow[r, "\xi_\cyc^{\pm/\pm}" ] \arrow[d, "\beta"', "\cong"]    & \displaystyle\bigoplus_{v \in S} J_v^{\pm}(E_{p^{\infty}}/L_\cyc) \arrow[d, "{\gamma}^{\pm/\pm}:={\ds\bigoplus_{v \in S}{\gamma}_{v}^{\pm}}", "\cong"'] \\
						0 \arrow[r] & \Sel^{\pm/\pm} (E_{p^{\infty}}/L_\infty)^{H} \arrow[r] & H^1(G_{\infty}^{S}, E_{p^{\infty}})^{H} \arrow[r, "\xi^{\pm/\pm,H}"] & \displaystyle\bigoplus_{v \in S} \left(J_v^{\pm}(E_{p^{\infty}}/L_\infty)\right)^{H}.           	
					\end{tikzcd}
				\end{equation}
			\end{small}
			The maps $\beta $ and ${\gamma}^{\pm/\pm}  $ are isomorphisms by Lemma 5.6, 5.8, and 5.10 of \cite{Lei-Sujatha}.
			By snake lemma, the map $ \alpha^{\pm/\pm} $ is also an isomorphism and hence, all the vertical maps in the diagram \eqref{eq:fundumetal diagram for Selmer} are isomorphisms. 
			\begin{prop}\label{pro:Hyp 2 (cyc) implies Hyp 2}
				Suppose $ E/L $ is an elliptic curve over quadratic number field $ L $ satisfying \hyperref[Hyp1]{Hyp 1}.
				If \hyperref[Hyp2(cyc)]{Hyp 2$ ^{\pm/\pm} $(cyc)} holds then
				\begin{align}\label{eq:mathfrak{X}{pm/pm}(infty)-H iso mathfrak{X}{pm/pm}(cyc)}
					\mathfrak{X}^{\pm/\pm}(E_{p^{\infty}}/L_\cyc) \cong \mathfrak{X}^{\pm/\pm} (E_{p^{\infty}}/L_\infty)_H,
				\end{align}
				where $ \mathfrak{X}^{\pm/\pm} (E_{p^{\infty}}/L_\infty)_H  $ denotes the of $ H $-coinvariants of $ \mathfrak{X}^{\pm/\pm} (E_{p^{\infty}}/L_\infty)$.
				Moreover, \hyperref[Hyp2]{Hyp 2$ ^{\pm/\pm} $} holds and  the group	$ H^{1}(H,\Sel^{\pm/\pm} (E_{p^{\infty}}/L_\infty)) $ vanishes.
			\end{prop}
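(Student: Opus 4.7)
The plan is to establish the three claims in the order stated, using the fundamental diagram \eqref{eq:fundumetal diagram for Selmer} as the organizing tool. For the first isomorphism, I begin by observing that every vertical map in diagram \eqref{eq:fundumetal diagram for Selmer} is unconditionally an isomorphism, as noted just after it; in particular, $\alpha^{\pm/\pm}$ identifies $\Sel^{\pm/\pm}(E_{p^{\infty}}/L_\cyc)$ with $\Sel^{\pm/\pm}(E_{p^{\infty}}/L_\infty)^{H}$. Taking Pontryagin duals converts $H$-invariants on the discrete side into $H$-coinvariants on the compact side, which yields the desired isomorphism.

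For \hyperref[Hyp2]{Hyp 2$ ^{\pm/\pm} $}, the previous step combined with \hyperref[Hyp2(cyc)]{Hyp 2$ ^{\pm/\pm} $(cyc)} shows that the $H$-coinvariant module $\mathfrak{X}^{\pm/\pm}(E_{p^{\infty}}/L_\infty)_{H}$ is $\Lambda(\Gamma)$-torsion. Since $\mathfrak{X}^{\pm/\pm}(E_{p^{\infty}}/L_\infty)$ is finitely generated over $\Lambda(G)$, a topological-Nakayama style descent via \cite[Lemma 2.6]{Hachimori2010} --- the same ingredient already used in the proof of Theorem \ref{thm:TFAE-cyc implies TFAE} --- implies that $\mathfrak{X}^{\pm/\pm}(E_{p^{\infty}}/L_\infty)$ is itself $\Lambda(G)$-torsion.

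For the vanishing of $H^{1}(H,\Sel^{\pm/\pm}(E_{p^{\infty}}/L_\infty))$, the previous step together with Proposition \ref{pro:Hyp 2 equivalence} yields both $H^{2}(G_{\infty}^{S}, E_{p^{\infty}}) = 0$ and surjectivity of $\xi^{\pm/\pm}$, producing the short exact sequence
\[
0 \to \Sel^{\pm/\pm}(E_{p^{\infty}}/L_\infty) \to H^{1}(G_{\infty}^{S}, E_{p^{\infty}}) \xrightarrow{\xi^{\pm/\pm}} \bigoplus_{v \in S} J_{v}^{\pm}(E_{p^{\infty}}/L_\infty) \to 0.
\]
Since all vertical maps in diagram \eqref{eq:fundumetal diagram for Selmer} are isomorphisms and $\xi_{\cyc}^{\pm/\pm}$ is surjective under \hyperref[Hyp2(cyc)]{Hyp 2$ ^{\pm/\pm} $(cyc)} (by Proposition \ref{pro:Hyp 2 equivalence}), the map $\xi^{\pm/\pm,H}$ is surjective. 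The long exact $H$-cohomology sequence then forces the connecting homomorphism out of $\bigoplus_{v} J_{v}^{\pm}(E_{p^{\infty}}/L_\infty)^{H}$ to vanish, giving an injection
\[
H^{1}(H,\Sel^{\pm/\pm}(E_{p^{\infty}}/L_\infty)) \hookrightarrow H^{1}(H, H^{1}(G_{\infty}^{S}, E_{p^{\infty}})).
\]
I would then finish with the Hochschild--Serre spectral sequence $E_{2}^{p,q} = H^{p}(H, H^{q}(G_{\infty}^{S}, E_{p^{\infty}})) \Rightarrow H^{p+q}(G_{\cyc}^{S}, E_{p^{\infty}})$: since $H \cong \bZ_p$ has $p$-cohomological dimension one, the differentials into and out of $E_{2}^{1,1}$ are zero, so $E_{\infty}^{1,1} = E_{2}^{1,1} = H^{1}(H, H^{1}(G_{\infty}^{S}, E_{p^{\infty}}))$ appears as a graded piece of $H^{2}(G_{\cyc}^{S}, E_{p^{\infty}})$, which vanishes by Proposition \ref{pro:Hyp 2 equivalence} applied to \hyperref[Hyp2(cyc)]{Hyp 2$ ^{\pm/\pm} $(cyc)}. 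Hence $H^{1}(H, H^{1}(G_{\infty}^{S}, E_{p^{\infty}})) = 0$ and the claim follows.

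The most delicate step is the Nakayama-style descent: one must confirm that $\mathfrak{X}^{\pm/\pm}(E_{p^{\infty}}/L_\infty)$ is finitely generated over $\Lambda(G)$ before invoking \cite[Lemma 2.6]{Hachimori2010}. The remaining arguments are clean diagram chases and a spectral-sequence edge computation, so the main obstacle is structural rather than computational.
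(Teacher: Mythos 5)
Your argument tracks the paper's own proof closely in both structure and ingredients: the first isomorphism comes from dualizing the vertical isomorphism $\alpha^{\pm/\pm}$ in diagram \eqref{eq:fundumetal diagram for Selmer}; \hyperref[Hyp2]{Hyp~2$^{\pm/\pm}$} follows by applying \cite[Lemma 2.6]{Hachimori2010} to the now-torsion $H$-coinvariants; and the vanishing of $H^{1}(H,\Sel^{\pm/\pm}(E_{p^{\infty}}/L_\infty))$ is read off from the long exact $H$-cohomology of the short exact sequence \eqref{eq:signed Selmer infty with J_v pm}. Where you go beyond the paper is in the last step. The paper's proof asserts $\Coker(\xi^{\pm/\pm,H}) = H^{1}(H,\Sel^{\pm/\pm}(E_{p^{\infty}}/L_\infty))$ and deduces vanishing from surjectivity of $\xi^{\pm/\pm,H}$, but the long exact sequence actually yields only
\[
\Coker(\xi^{\pm/\pm,H}) \cong \ker\Bigl(H^{1}\bigl(H,\Sel^{\pm/\pm}(E_{p^{\infty}}/L_\infty)\bigr) \longrightarrow H^{1}\bigl(H, H^{1}(G_{\infty}^{S}, E_{p^{\infty}})\bigr)\Bigr),
\]
so one additionally needs $H^{1}(H, H^{1}(G_{\infty}^{S}, E_{p^{\infty}}))=0$ to conclude. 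Your Hochschild--Serre computation --- $E_{2}^{1,1}$ survives to $E_{\infty}$ because $H\cong\bZ_{p}$ has $p$-cohomological dimension one, and it embeds in $H^{2}(G_{\cyc}^{S},E_{p^{\infty}})$, which vanishes under \hyperref[Hyp2(cyc)]{Hyp~2$^{\pm/\pm}$(cyc)} by Proposition \ref{pro:Hyp 2 equivalence} --- supplies precisely this missing piece, and is a worthwhile tightening of the argument. Your caution about finite generation of $\mathfrak{X}^{\pm/\pm}(E_{p^{\infty}}/L_\infty)$ over $\Lambda(G)$ before invoking Hachimori's lemma is not a real obstacle: it is a quotient of $H^{1}(G_{\infty}^{S},E_{p^{\infty}})\Pd$, which is finitely generated over $\Lambda(G)$, and topological Nakayama in any case promotes finite generation of the $H$-coinvariants to finite generation of the module itself.
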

			\begin{proof}
				By Proposition \ref{pro:Hyp 2 equivalence}, the top row of the diagram \eqref{eq:fundumetal diagram for Selmer} becomes a short exact sequence. 
				A diagram chase gives that, since the map ${\gamma}^{\pm/\pm}  \circ \xi_\cyc^{\pm/\pm}  $ is surjective, the map $ \xi^{\pm/\pm,H} $ is also surjective and hence, the diagram \eqref{eq:fundumetal diagram for Selmer} becomes an isomorphism of two short exact sequences.
				Taking the Pontryagin dual of the isomorphism $ \alpha^{\pm/\pm} $ gives the first claim.

				By \hyperref[Hyp2(cyc)]{Hyp 2$ ^{\pm/\pm} $(cyc)}, $ \Lambda(\Gamma) $-module $ \mathfrak{X}^{\pm/\pm} (E_{p^{\infty}}/L_\infty)_H $ is finitely generated and torsion. 
				This implies that $ \mathfrak{X}^{\pm/\pm} (E_{p^{\infty}}/L_\infty) $ is a finitely generated torsion $ \Lambda(G) $-module (\textit{cf. \cite[Lemma 2.6]{Hachimori2010}}) which is \hyperref[Hyp2]{Hyp 2$ ^{\pm/\pm} $}. 
				By Proposition \ref{pro:Hyp 2 equivalence}, this implies that the map $\xi^{\pm/\pm}   $ in \eqref{eq:signed Selmer infty with J_v pm} is surjective.
				Taking long exact $ H $-cohomology of the short exact sequence  \eqref{eq:signed Selmer infty with J_v pm} , we get the bottom short exact sequence in the diagram \eqref{eq:fundumetal diagram for Selmer}.   
				In particular, the map $\xi^{\pm/\pm,H}$  is surjective which implies 
					$ 	\Coker(\xi^{\pm/\pm,H}) =H^{1}(H,\Sel^{\pm/\pm} (E_{p^{\infty}}/L_\infty))   = 0. $
			\end{proof}
			\begin{lem}
				Assuming \hyperref[Hyp2(cyc)]{Hyp 2$ ^{\pm/\pm} $(cyc)} holds, the cohomological group 
				\newline
				$ H^{1}(\Gamma,\Sel^{\pm/\pm} (E_{p^{\infty}}/L_\cyc)) $ vanishes if and only if $ \xi_{\cyc}^{\pm/\pm,\Gamma}$ is surjective. 
			\end{lem}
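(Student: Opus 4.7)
The plan is to apply $\Gamma$-cohomology to the short exact sequence provided by Hyp 2$^{\pm/\pm}$(cyc) and read off the iff from the resulting six-term exact sequence. By Proposition \ref{pro:Hyp 2 equivalence}, the hypothesis is equivalent to the surjectivity of $\xi_\cyc^{\pm/\pm}$, so we have a short exact sequence of $\Lambda(\Gamma)$-modules
\[
0 \to \Sel^{\pm/\pm}(E_{p^\infty}/L_\cyc) \to H^1(G_\cyc^S, E_{p^\infty}) \xrightarrow{\xi_\cyc^{\pm/\pm}} \bigoplus_{v \in S} J_v^{\pm}(E_{p^\infty}/L_\cyc) \to 0.
\]
Abbreviate the three terms by $\Sel$, $M$, and $N$. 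Since $\Gamma \cong \bZ_p$ acts on $p$-primary discrete modules with $p$-cohomological dimension one, the long exact sequence in $\Gamma$-cohomology terminates after degree one, yielding the six-term exact sequence
\[
0 \to \Sel^\Gamma \to M^\Gamma \xrightarrow{\xi_\cyc^{\pm/\pm,\Gamma}} N^\Gamma \xrightarrow{\delta} H^1(\Gamma, \Sel) \to H^1(\Gamma, M) \to H^1(\Gamma, N) \to 0.
\]

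The ``only if'' direction is immediate: if $H^1(\Gamma, \Sel) = 0$, then the connecting map $\delta$ is the zero map, and the exactness at $N^\Gamma$ forces $\xi_\cyc^{\pm/\pm,\Gamma}$ to be surjective. For the ``if'' direction, surjectivity of $\xi_\cyc^{\pm/\pm,\Gamma}$ implies $\delta = 0$, so we extract from the six-term sequence a short exact sequence
\[
0 \to H^1(\Gamma, \Sel) \to H^1(\Gamma, M) \to H^1(\Gamma, N) \to 0.
\]
It therefore suffices to show that the right-hand map is injective, equivalently that $H^1(\Gamma, M) \to H^1(\Gamma, N)$ has trivial kernel.

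To handle this, I would apply Hochschild--Serre to the extension $1 \to G_\cyc^S \to G_L^S \to \Gamma \to 1$. Using $\mathrm{cd}_p(\Gamma)=1$ and the vanishing $H^2(G_\cyc^S, E_{p^\infty}) = 0$ (which is part of Proposition \ref{pro:Hyp 2 equivalence} under Hyp 2$^{\pm/\pm}$(cyc)), the five-term sequence collapses to an isomorphism $H^1(\Gamma, M) \cong H^2(G_L^S, E_{p^\infty})$. Parallel local Hochschild--Serre computations identify $H^1(\Gamma, N)$ with a sum of local $H^2$ groups at the places in $S$. The map $H^1(\Gamma,M) \to H^1(\Gamma, N)$ is then the global-to-local map $H^2(G_L^S, E_{p^\infty}) \to \bigoplus_v H^2(L_v, E_{p^\infty})$ (adjusted by the image of the Kummer maps at supersingular primes), whose kernel is controlled by Poitou--Tate duality.

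The main obstacle is verifying this last kernel vanishes. I expect this to reduce, via Poitou--Tate, to the statement that $H^0(G_L^S, T_pE)$ is trivial, which follows from the fact that $E(L)_{p^\infty} = 0$ under \hyperref[Hyp1]{Hyp 1} (by equation \eqref{eq:Ep=0}). Collecting these ingredients completes the reverse direction and establishes the equivalence.
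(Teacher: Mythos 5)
Your opening moves match the paper: start from the short exact sequence forced by Hyp~2$^{\pm/\pm}$(cyc) via Proposition \ref{pro:Hyp 2 equivalence}, pass to the long exact $\Gamma$-cohomology sequence, truncate using $\mathrm{cd}_p(\Gamma)=1$, and read off the ``only if'' direction from the connecting map $\delta$. That part is fine and is essentially what the paper does (the paper phrases the whole lemma as ``$\Coker(\xi_{\cyc}^{\pm/\pm,\Gamma}) = H^{1}(\Gamma,\Sel^{\pm/\pm}(E_{p^{\infty}}/L_\cyc))$'').

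The difficulty is your treatment of the ``if'' direction. You correctly identify that what is really needed is $H^{1}(\Gamma, H^{1}(G_{\cyc}^{S},E_{p^{\infty}}))=0$, and via Hochschild--Serre you correctly identify this group with $H^{2}(G_{L}^{S},E_{p^{\infty}})$. But the Poitou--Tate reduction at the end is off by a degree. Since $T_pE(L_v)=0$ for all $v\in S$ (this uses equation \eqref{eq:Ep=0} at $v\mid p$ and the structure of $E(L_v)$ at $v\nmid p$), local Tate duality gives $H^{2}(L_v,E_{p^{\infty}})=0$ for every $v\in S$, so the kernel you are trying to control is \emph{all} of $H^{2}(G_{L}^{S},E_{p^{\infty}})$, not a subgroup cut out by a local map. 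By Poitou--Tate this group is dual to $\Sha^{1}(T_pE) = \ker\bigl(H^{1}(G_{L}^{S},T_pE)\to\bigoplus_v H^{1}(L_v,T_pE)\bigr)$; the group $H^{0}(G_{L}^{S},T_pE)^{\vee}$ that you invoke is the \emph{cokernel} of the map $H^{2}(G_{L}^{S},E_{p^{\infty}})\to\bigoplus_v H^{2}(L_v,E_{p^{\infty}})$, not its kernel. So knowing $E(L)_{p^{\infty}}=0$ does not close the argument, and $\Sha^{1}(T_pE)=0$ is not a free consequence of \hyperref[Hyp1]{Hyp 1}.

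The input that actually makes the argument go through --- and which the paper's very terse phrasing relies on --- is the structural fact that, under Hyp~2$^{\pm/\pm}$(cyc), the Pontryagin dual of $H^{1}(G_{\cyc}^{S},E_{p^{\infty}})$ is a free $\Lambda(\Gamma)$-module. (This follows from $H^{0}(G_{\cyc}^{S},E_{p^{\infty}})=E_{p^{\infty}}(L_{\cyc})=0$ together with $H^{2}(G_{\cyc}^{S},E_{p^{\infty}})=0$, by the results of Greenberg and Ochi--Venjakob \cite{OchiVenjakov} on the structure of Iwasawa cohomology.) Freeness of the dual forces $H^{1}(\Gamma, H^{1}(G_{\cyc}^{S},E_{p^{\infty}}))=0$, whence the map $H^{1}(\Gamma,\Sel^{\pm/\pm})\to H^{1}(\Gamma, H^{1}(G_{\cyc}^{S},E_{p^{\infty}}))$ is the zero map and the cokernel of $\xi_{\cyc}^{\pm/\pm,\Gamma}$ equals $H^{1}(\Gamma,\Sel^{\pm/\pm}(E_{p^{\infty}}/L_\cyc))$ on the nose, giving both directions at once. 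So: keep your first half, but replace the Poitou--Tate step with the $\Lambda(\Gamma)$-cofreeness of $H^{1}(G_{\cyc}^{S},E_{p^{\infty}})$.
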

			\begin{proof}
				By Proportion \ref{pro:Hyp 2 (cyc) implies Hyp 2}, $ H^{1}(H,\Sel^{\pm/\pm} (E_{p^{\infty}}/L_\infty))  $ vanishes. 
				The inflation-restriction exact sequence implies that 
				\begin{align*}
					H^{1}(\Gamma,\Sel^{\pm/\pm} (E_{p^{\infty}}/L_\cyc)) = H^{1}(G,\Sel^{\pm/\pm} (E_{p^{\infty}}/L_\infty)).
				\end{align*}
				Let $ G^{S}:=\Gal(L^{S}/L) $ and consider the following commutative diagram
				\begin{small}
					\begin{equation}\label{eq:fundumetal diagram for finite Selmer}
						\begin{tikzcd}
							0 \arrow[r] &  \Sel (E_{p^{\infty}}/L) \arrow[r] \arrow[d, "\alpha^{\pm/\pm}_{\cyc}"']          & H^1(G^{S}, E_{p^{\infty}}) \arrow[r, "\lambda_0" ] \arrow[d, "\beta_{\cyc}"']    & \displaystyle\bigoplus_{v \in S} J_v(E_{p^{\infty}}/L) \arrow[d, "{\gamma}_{\cyc}^{\pm/\pm}:={\ds\bigoplus_{v \in S}{\gamma}_{\cyc,v}^{\pm}}"]\\
							0 \arrow[r] & \Sel^{\pm/\pm} (E_{p^{\infty}}/L_\cyc)^{\Gamma} \arrow[r] & H^1(G_{\cyc}^{S}, E_{p^{\infty}})^{\Gamma} \arrow[r, "\xi_{\cyc}^{\pm/\pm,\Gamma}"] & \displaystyle\bigoplus_{v \in S} \left(J_v^{\pm}(E_{p^{\infty}}/L_\cyc)\right)^{\Gamma}.           	
						\end{tikzcd}
					\end{equation}
				\end{small}
				The map $ \lambda_{0} $ is as in the exact sequence \eqref{def:Selmer with J_n}. 
				Note that at $  L_0 = L $, the classical $ p^{\infty} $-Selmer and the signed Selmer groups coincide.
				Recall that \hyperref[Hyp2(cyc)]{Hyp 2$ ^{\pm/\pm} $(cyc)}  implies the map $\xi^{\pm/\pm}   $ is surjective by Proposition \ref{pro:Hyp 2 equivalence}.
				Taking long exact $ \Gamma $-cohomology  of the short exact sequence \eqref{eq:short-exact-selmer-cyc} yields that 
				$ \Coker(\xi_{\cyc}^{\pm/\pm,\Gamma} ) = H^{1}(\Gamma,\Sel^{\pm/\pm} (E_{p^{\infty}}/L_\cyc)) $.
			\end{proof}
			\begin{lem}\label{rem:beta-{cyc} is iso}
				The map $ \beta_{\cyc} $ in diagram \eqref{eq:fundumetal diagram for finite Selmer} is an isomorphism. 
			\end{lem}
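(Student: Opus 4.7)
The plan is to apply the inflation--restriction exact sequence to the short exact sequence of Galois groups $1 \to G_\cyc^S \to G^S \to \Gamma \to 1$, which yields
\[
0 \to H^1(\Gamma, E(L_\cyc)_{p^\infty}) \to H^1(G^S, E_{p^\infty}) \xrightarrow{\beta_\cyc} H^1(G_\cyc^S, E_{p^\infty})^\Gamma \to H^2(\Gamma, E(L_\cyc)_{p^\infty}),
\]
since $(E_{p^\infty})^{G_\cyc^S} = E(L_\cyc)_{p^\infty}$. Thus it suffices to show that $E(L_\cyc)_{p^\infty} = 0$; the kernel and cokernel of $\beta_\cyc$ then both vanish and $\beta_\cyc$ is an isomorphism.

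To see that $E(L_\cyc)_{p^\infty}$ is trivial, I would localize at a prime $\fq$ of $L_\cyc$ lying above $\fp$. By \hyperref[Hyp1]{Hyp 1}(ii) we have $L_\fp = \bQ_p$, so the completion $L_{\cyc,\fq}$ coincides with the cyclotomic $\bZ_p$-extension of $\bQ_p$, that is, with $K_\cyc$ for $K = \bQ_p$ (in the notation of Section~\ref{sec:preliminaries}). Since $E/L$ has good supersingular reduction at $\fp$ with $a_\fp = 0$ by \hyperref[Hyp1]{Hyp 1}(iii)--(iv), equation \eqref{eq:Ep=0} applies and gives $E(K_\cyc)_p = 0$. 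The natural embedding $E(L_\cyc) \hookrightarrow E(L_{\cyc,\fq})$ then forces $E(L_\cyc)_p = 0$, and since any non-trivial $p^\infty$-torsion point would, after multiplication by a suitable power of $p$, produce a non-trivial $p$-torsion point, we conclude $E(L_\cyc)_{p^\infty} = 0$.

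Combining the two steps, both $H^1(\Gamma, E(L_\cyc)_{p^\infty})$ and $H^2(\Gamma, E(L_\cyc)_{p^\infty})$ vanish, so $\beta_\cyc$ is an isomorphism. The argument is short and the only subtle point is verifying that the local completion of $L_\cyc$ at a prime above $p$ is indeed one of the local fields to which equation \eqref{eq:Ep=0} applies; this is immediate from the hypothesis that $p$ splits completely in $L/\bQ$.
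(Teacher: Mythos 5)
Your proof is correct and takes essentially the same route as the paper's: both apply the Hochschild--Serre (inflation--restriction) sequence and then observe that the coefficient module $E(L_\cyc)_{p^\infty}$ is trivial, invoking equation \eqref{eq:Ep=0}. The one minor stylistic difference is that the paper justifies the vanishing of the cokernel term $H^2(\Gamma, E(L_\cyc)_{p^\infty})$ by appealing to $\mathrm{cd}_p(\Gamma)=1$, whereas you deduce it (as well as the $H^1$ vanishing) directly from the triviality of the coefficient module; your version is arguably slightly more economical, and you also fill in the localization step $E(L_\cyc)\hookrightarrow E(L_{\cyc,\fq})=E(K_\cyc)$ that the paper leaves implicit when citing \eqref{eq:Ep=0}.
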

			\begin{proof}
				The Hochschild--Serre spectral sequence gives
				\begin{align*}
					0 \rightarrow H^{1}(\Gamma , E_{p^{\infty}}(L_{\cyc}))    \rightarrow H^1(G^{S}, E_{p^{\infty}}) \xrightarrow{\beta_{\cyc}} H^{1}(G_{\cyc}^{S},E_{p^{\infty}})^{\Gamma} \rightarrow  0 	
				\end{align*}
				The last term is zero as $\Gamma  $ has $ p $-cohomological dimension.
				Moreover, equation \eqref{eq:Ep=0} gives that $E_{p^{\infty}}(L_{\cyc})  $ is zero and so 
				$ H^{1}(\Gamma , E_{p^{\infty}}(L_{\cyc})) $ vanishes.
			\end{proof}
			\begin{rem}(\cite[Corollary 5.13]{Lei-Sujatha})\label{prop:H1(Gamma,Sel{pm/pm} (E{p{infty}}/Lcyc))=0}
				If $ \Sel(E_{p^{\infty}}/L) $ is finite, the map $ \xi_{\cyc}^{\pm/\pm,\Gamma}$ is surjective and hence
				\[
				H^{1}(\Gamma,\Sel^{\pm/\pm} (E_{p^{\infty}}/L_\cyc)) = H^{1}(G,\Sel^{\pm/\pm} (E_{p^{\infty}}/L_\infty)) = 0.
				\] 
			\end{rem}
			Let $ \gamma $ and $ h $ be topological generators of the groups $ \Gamma $ and $ H $ respectively.
			Since $ G \cong H \times \Gamma$, there is an isomorphism
			\begin{align*}
				\Lambda(G)=\bZ_p[[G]]\cong \bZ_p[[H \times \Gamma]] \xrightarrow{\cong} \bZ_{p}[[T_1,T_2]]
			\end{align*}
			where $ h-1 $ (\textit{resp.} $ \gamma-1 $)  is mapped to the indeterminate variable $ T_1 $ (\textit{resp.} $ T_2 $ ) and we extend $ \bZ_p $-linearly.
			In what follows we use $ \Lambda(H) $ (\textit{resp.} $ \Lambda(\Gamma) $) and $ \bZ_p[[T_1]] $ (\textit{resp.} $  \bZ_p[[T_2]] $) interchangeably. 
			\begin{definition}\label{def:regular sequence}
				Suppose $ R $ is a ring and let $ M $ be an $ R $-module. 
				A sequence of elements $ f_1,\cdots,f_k $ of $ R $ is called an \textbf{$ M $-regular sequence} if the following conditions hold:
				\begin{enumerate}
					\item The element $ f_i $ is a non-zero divisor on $ M/(f_1,\cdots,f_{i-1})M $ for each $ i=1,\cdots,k $;
					\item the module $ M/(f_1,\cdots,f_k)M $ is not zero.	
				\end{enumerate}
				If $ I $ is an ideal of the ring $ R $ and $ f_1,\cdots,f_k \in I $ then we call $ f_{1},…,f_{k} $ an \textbf{$ M $-regular sequence in $ I $}. 
				Moreover, if $ M=R $, we call $ f_{1},\cdots,f_{k} $ a \textbf{regular sequence in $ I $}.
			\end{definition}
			Using the above definition we can define the depth of a finitely generated module over an Iwasawa algebra $ \Lambda $.
			\begin{definition}\label{def:depth} 
				Let $ I $ be an ideal of $ \Lambda $ and suppose $ M $ is a finitely generated $ \Lambda $-module such that $ IM \neq M $. 
				Then the \textbf{$ I $-depth} of $ M $, denoted by $ \depth_I(M) $, is the maximal length of a $ M $-regular sequence in $ I $. 
				The \textbf{depth} of $ M $ as an $  \Lambda$-module, denoted by $ \depth(M) $, is defined to be
				\[
				\depth(M) := \depth_\fm(M)
				\]
				where $ \fm $ is the maximal ideal of $ \Lambda $.
			\end{definition}
			{
				\begin{thm}\label{thm:no no pseudo-null}
					Suppose $E/L  $ is an elliptic curve satisfying \hyperref[Hyp1]{Hyp 1} and $ \Sel(E_{p^{\infty}}/L) $ is finite.
					{Then the following assertions hold.}
					\begin{enumerate}
						\item\label{thm:no no pseudo-null-part1}  $ \mathfrak{X}^{\pm/\pm} (E_{p^{\infty}}/L_\infty) $ has no non-trivial pseudo-null $ \Lambda(G) $-submodule.
						\item\label{thm:no no pseudo-null-part2}  $ \mathfrak{X}(E_{p^{\infty}}/L_\infty) $ has no non-trivial pseudo-null $ \Lambda(G) $-submodule.
					\end{enumerate}
				\end{thm}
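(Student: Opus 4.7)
Since $G \cong \bZ_p^2$, the Iwasawa algebra $\Lambda(G) \cong \bZ_p[[T_1,T_2]]$ is a regular local ring of Krull dimension $3$. The plan rests on the standard consequence of Auslander--Buchsbaum--Serre that, for any finitely generated $\Lambda(G)$-module $M$, the inequality $\depth_{\Lambda(G)}(M) \leq \dim(\Lambda(G)/\mathfrak{p})$ for each associated prime $\mathfrak{p}$ of $M$ forces every associated prime to have height at most $1$ whenever $\depth_{\Lambda(G)}(M) \geq 2$; equivalently, $M$ has no nontrivial pseudo-null $\Lambda(G)$-submodule. The task therefore reduces to establishing $\depth_{\Lambda(G)}(M) \geq 2$ for $M = \mathfrak{X}^{\pm/\pm}(E_{p^\infty}/L_\infty)$ and for $M = \mathfrak{X}(E_{p^\infty}/L_\infty)$.

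\emph{Step 1 (Cohomological inputs).} The finiteness of $\Sel(E_{p^\infty}/L)$ yields, via Remark \ref{prop:H1(Gamma,Sel{pm/pm} (E{p{infty}}/Lcyc))=0}, the vanishing $H^1(G,\Sel^{\pm/\pm}(E_{p^\infty}/L_\infty)) = 0$, and via Proposition \ref{pro:Hyp 2 (cyc) implies Hyp 2} also $H^1(H,\Sel^{\pm/\pm}(E_{p^\infty}/L_\infty)) = 0$. In particular both \hyperref[Hyp2(cyc)]{Hyp 2$^{\pm/\pm}$(cyc)} and \hyperref[Hyp2]{Hyp 2$^{\pm/\pm}$} hold. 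The analogous vanishings for $\Sel(E_{p^\infty}/L_\infty)$ follow by running the same arguments on diagram \eqref{eq:fundumetal diagram for finite Selmer}, using that at level $L$ the classical and signed Selmer groups coincide.

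\emph{Step 2 (A length-two regular sequence on the signed dual).} I would exhibit a regular sequence of length $2$ on $\mathfrak{X}^{\pm/\pm}$ inside $\mathfrak{m}_{\Lambda(G)} = (p, h-1, \gamma-1)$, starting with $f_1 = h-1$. Under Pontryagin duality the statement \emph{$h-1$ is a nonzerodivisor on $\mathfrak{X}^{\pm/\pm}$} becomes \emph{the $H$-coinvariants of $\Sel^{\pm/\pm}(E_{p^\infty}/L_\infty)$ vanish}; I would derive this from the Hochschild--Serre exact sequence for $1 \to H \to G \to \Gamma \to 1$ combined with the $H^1$-vanishings of Step 1 and a diagram chase in \eqref{eq:fundumetal diagram for Selmer}. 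Once $h-1$ is known to be regular, Proposition \ref{pro:Hyp 2 (cyc) implies Hyp 2} provides the $\Lambda(\Gamma)$-module isomorphism
\[
\mathfrak{X}^{\pm/\pm}(E_{p^\infty}/L_\infty)\big/(h-1)\mathfrak{X}^{\pm/\pm}(E_{p^\infty}/L_\infty) \cong \mathfrak{X}^{\pm/\pm}(E_{p^\infty}/L_\cyc),
\]
and it suffices to produce a single nonzerodivisor on the right-hand side, equivalently to check that $\mathfrak{X}^{\pm/\pm}(E_{p^\infty}/L_\cyc)$ has no nonzero finite (hence pseudo-null) $\Lambda(\Gamma)$-submodule. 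This is the cyclotomic analogue of the theorem and is available from the theory of \cite{SujFil} and \cite{Lei-Sujatha} under the present finite-Selmer hypothesis.

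\emph{Step 3 (The classical Selmer group).} For $M = \mathfrak{X}(E_{p^\infty}/L_\infty)$, I would apply the same blueprint. Remark \ref{prop:H1(Gamma,Sel{pm/pm} (E{p{infty}}/Lcyc))=0} also provides $H^1(G,\Sel(E_{p^\infty}/L_\infty)) = 0$, and the classical analogue of diagram \eqref{eq:fundumetal diagram for Selmer} gives $\mathfrak{X}(E_{p^\infty}/L_\infty)/(h-1)\mathfrak{X}(E_{p^\infty}/L_\infty) \cong \mathfrak{X}(E_{p^\infty}/L_\cyc)$. A parallel regular-sequence construction, invoking the known absence of pseudo-null $\Lambda(\Gamma)$-submodules in the dual of the classical cyclotomic Selmer group, then yields $\depth_{\Lambda(G)}(\mathfrak{X}) \geq 2$. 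The main obstacle is the first half of Step 2: verifying that $h-1$ is a nonzerodivisor on $\mathfrak{X}^{\pm/\pm}$. Pontryagin duality converts this into a statement about the $H$-coinvariants of the Selmer group, which is not directly among the cohomological inputs of Step 1; it must be extracted by carefully interlacing the fundamental diagram \eqref{eq:fundumetal diagram for Selmer} with the Hochschild--Serre spectral sequence for $G/H \cong \Gamma$, using the $H^1$-vanishings as the key input.
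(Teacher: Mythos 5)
Your Part~(1) argument is essentially the paper's: both reduce to producing a length-two regular sequence $(T_1,T_2)=(h-1,\gamma-1)$ on $\mathfrak{X}^{\pm/\pm}(E_{p^\infty}/L_\infty)$, invoke the isomorphism $\mathfrak{X}^{\pm/\pm}(E_{p^\infty}/L_\infty)_H\cong\mathfrak{X}^{\pm/\pm}(E_{p^\infty}/L_\cyc)$ from Proposition~\ref{pro:Hyp 2 (cyc) implies Hyp 2}, and apply Auslander--Buchsbaum. (Your criterion \emph{depth}~$\geq 2$~$\Rightarrow$~no pseudo-null submodule and the paper's route via projective dimension~$\leq 1$ and \cite[Prop.~3.10]{venjakob2002structure} are interchangeable by Auslander--Buchsbaum over the regular local ring $\Lambda(G)$.) However, the ``main obstacle'' you flag at the end is not actually an obstacle: for $H\cong\bZ_p$ and a discrete $p$-primary $H$-module $M$ one has $H^1(H,M)\cong M_H$, and dually $H^1(H,M)^\wedge\cong\Tor_1^{\Lambda(H)}(\bZ_p,M^\wedge)=M^\wedge[T_1]$. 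Thus the vanishing $H^1(H,\Sel^{\pm/\pm}(E_{p^\infty}/L_\infty))=0$ from your Step~1 \emph{is} precisely the statement that $T_1$ is a nonzerodivisor on $\mathfrak{X}^{\pm/\pm}$; no additional interlacing of Hochschild--Serre with the fundamental diagram is required. This is exactly the paper's computation in display~\eqref{lem:Tor{1} is MT1}.

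Part~(2) has a genuine gap. You propose running the identical regular-sequence argument on $\mathfrak{X}(E_{p^\infty}/L_\infty)$, but the required inputs are not available: Remark~\ref{prop:H1(Gamma,Sel{pm/pm} (E{p{infty}}/Lcyc))=0} and Proposition~\ref{pro:Hyp 2 (cyc) implies Hyp 2} concern the \emph{signed} Selmer groups only and do not give $H^1(H,\Sel(E_{p^\infty}/L_\infty))=0$, nor the control isomorphism $\mathfrak{X}(E_{p^\infty}/L_\infty)_H\cong\mathfrak{X}(E_{p^\infty}/L_\cyc)$. Indeed the classical Selmer group over $L_\infty$ is expected to have positive $\Lambda(G)$-corank in the supersingular situation, the local conditions at $p$ are unrestricted, and the comparison maps $\gamma_v$ of diagram~\eqref{eq:fundumetal diagram for Selmer} are set up for the signed local conditions; a ``classical analogue'' of that diagram would require a separate analysis of $H^i(H_w,E(L_{\infty,w}))$ at $w\mid p$ and is not established here. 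The paper sidesteps all of this: it dualizes the exact sequence $0\to\Sel^{\pm/\pm}(E_{p^\infty}/L_\infty)\to\Sel(E_{p^\infty}/L_\infty)\to\bigoplus_{v\mid p}J_v^{\pm}(E_{p^\infty}/L_\infty)\to 0$ (surjectivity from Proposition~\ref{pro:Hyp 2 equivalence}) to obtain $0\to U\to\mathfrak{X}(E_{p^\infty}/L_\infty)\to\mathfrak{X}^{\pm/\pm}(E_{p^\infty}/L_\infty)\to 0$ with $U$ free of rank~$2$ by \cite[Cor.~3.9]{Lei-Lim}, and then transfers the bound $\pd\leq 1$ from Part~(1) across this extension. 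That step is the genuinely new content of Part~(2) and your proposal does not reach it.
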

				\begin{proof}
					For part \eqref{thm:no no pseudo-null-part1}, note that  for a finitely generated $ \Lambda(G) $-module $ M $
					$$ \depth(M) \leq \T{Krulldim}(M) \leq \T{Krulldim}(\Lambda(G)) =3. $$ 
					Since the module $ \mathfrak{X}^{\pm/\pm} (E_{p^{\infty}}/L_\infty) $ is $ \Lambda(G) $-torsion then  $ \T{depth}(\mathfrak{X}^{\pm/\pm} (E_{p^{\infty}}/L_\infty)) \leq 2$. 
					If the depth of $ \mathfrak{X}^{\pm/\pm} (E_{p^{\infty}}/L_\infty)$ is two, then the Auslander--Buchsbaum formula (\textit{cf.} Chapter 19.3 of \cite{Eisenbud1995}) will imply that the projective dimension of $  \mathfrak{X}^{\pm/\pm} (E_{p^{\infty}}/L_\infty) $ is equal to one.
					By Proposition 3.10 of \cite{venjakob2002structure}, any module of projective dimension at most one has no non-trivial non-trivial pseudo-null submodule.
					By Proposition \ref{pro:Hyp 2 (cyc) implies Hyp 2}, $ H{^{1}(H,\Sel^{\pm/\pm} (E_{p^{\infty}}/L_\infty))}\Pd  $ vanishes, and
					\begin{align}\label{lem:Tor{1} is MT1}
						0=H{^{1}(H,\Sel^{\pm/\pm} (E_{p^{\infty}}/L_\infty))}\Pd \nonumber
						&\cong
						H_{1}(H,\mathfrak{X}^{\pm/\pm} (E_{p^{\infty}}/L_\infty))\\
						=\Tor_{1}^{\Lambda(H)}(\bZ_p,\mathfrak{X}^{\pm/\pm} (E_{p^{\infty}}/L_\infty)) 
						&= \mathfrak{X}^{\pm/\pm} (E_{p^{\infty}}/L_\infty)[T_1]
					\end{align}
					where $ \mathfrak{X}^{\pm/\pm} (E_{p^{\infty}}/L_\infty)[T_1] $ is the set of elements in $  \mathfrak{X}^{\pm/\pm} (E_{p^{\infty}}/L_\infty)$ that are annihilated by $ T_1 $.
					Therefore, the element $ T_1 $ is a non-zero divisor {on} $ \mathfrak{X}^{\pm/\pm} (E_{p^{\infty}}/L_\infty) $. 
					{	
						Proposition \ref{pro:Hyp 2 (cyc) implies Hyp 2} also tells us that
							$ 	\mathfrak{X}^{\pm/\pm} (E_{p^{\infty}}/L_\infty)_H \cong \mathfrak{X}^{\pm/\pm}(E_{p^{\infty}}/L_\cyc)  $
						as $ \Lambda(\Gamma) $-modules.
						Note that $ \mathfrak{X}^{\pm/\pm} (E_{p^{\infty}}/L_\infty)_H  $ is non-zero, otherwise, by Nakayama's lemma $ \mathfrak{X}^{\pm/\pm} (E_{p^{\infty}}/L_\infty) $  is trivial as  $ T_1 \in  \fm_G $.
						Arguing as above, we see
						\begin{small}
							\begin{align*}
								H^{1}(\Gamma,\Sel^{\pm/\pm} (E_{p^{\infty}}/L_\cyc))\Pd 
								\cong
								H_1(\Gamma,\mathfrak{X}^{\pm/\pm} (E_{p^{\infty}}/L_\infty)_H)
								= (\mathfrak{X}^{\pm/\pm} (E_{p^{\infty}}/L_\infty)_H)[T_2].
							\end{align*}
						\end{small}
						By Remark \ref{prop:H1(Gamma,Sel{pm/pm} (E{p{infty}}/Lcyc))=0},  the group $ H^{1}(\Gamma,\Sel^{\pm/\pm} (E_{p^{\infty}}/L_\cyc)) $ vanishes.
						Hence, $ T_2 $ is a non-zero divisor {on}  $ \mathfrak{X}^{\pm/\pm} (E_{p^{\infty}}/L_\infty)_H  $.}
					Note that using Nakayama's lemma  we can see that the $ \Lambda(G) $-module 
					$ 	\mathfrak{X}^{\pm/\pm} (E_{p^{\infty}}/L_\infty)/(T_1,T_2)\mathfrak{X}^{\pm/\pm} (E_{p^{\infty}}/L_\infty) $ is non-trivial.
					{Therefor, the set $ \{T_1,T_2\} $ is an $ \fm_{G} $-sequence on $ \mathfrak{X}^{\pm/\pm} (E_{p^{\infty}}/L_\infty) $.
						Since $ \mathfrak{X}^{\pm/\pm} (E_{p^{\infty}}/L_\infty) $ is  $ \Lambda(G) $-torsion, then  $ \T{depth}(\mathfrak{X}^{\pm/\pm} (E_{p^{\infty}}/L_\infty))$ is two.}
					
					Similar to the first part, to show part \eqref{thm:no no pseudo-null-part2}, it suffices to show that $ \mathfrak{X} (E_{p^{\infty}}/L_\infty) $ has projective dimension at most one.
					Consider the short exact sequence:
					\begin{align*}
						0 \longrightarrow 
						\Sel^{\pm/\pm} (E_{p^{\infty}}/L_\infty) \longrightarrow \Sel (E_{p^{\infty}}/L_\infty) \longrightarrow \bigoplus_{v \in \{\fp,\bfp\}} J_v^{\pm}(E_{p^{\infty}}/L_\infty)
						\longrightarrow 0.
					\end{align*}
					The last map is exact by Proposition \ref{pro:Hyp 2 equivalence}.
					Let $ 	U:= \bigoplus_{v \in \{\fp,\bfp\}} J_v^{\pm}(E_{p^{\infty}}/L_\infty)\Pd $, then
					taking the Pontryagin dual of the above sequence yields
					\begin{align}\label{eq:signed Selmer and Selmer infty with J_v pm}
						0 \longrightarrow 
						U \longrightarrow \mathfrak{X} (E_{p^{\infty}}/L_\infty) \longrightarrow  \mathfrak{X}^{\pm/\pm} (E_{p^{\infty}}/L_\infty)
						\longrightarrow 0.
					\end{align}
					By Corollary 3.9 of \cite{Lei-Lim}, the $ \Lambda(G) $-module $ U $ is free of rank $ 2 $ and hence it has  projective dimension zero. 
					By part \eqref{thm:no no pseudo-null-part1}, the depth of $ \mathfrak{X}^{\pm/\pm} (E_{p^{\infty}}/L_\infty)  $ is equal to two, and so by the Auslander--Buchsbaum--Serre formula it has projective dimension one.
					Now, applying Lemma  10.109.9 of \cite{stacks-project} to the short exact sequence \eqref{eq:signed Selmer and Selmer infty with J_v pm} shows that the projective dimension of $ \mathfrak{X} (E_{p^{\infty}}/L_\infty) $ is at most one.

				\end{proof}
				Let us end this section by relating the signed $ \mu $-invariant $ \mu_G^{\pm/\pm}(E_{p^{\infty}}/L_\infty) $ to
				the $ \mu $-invariant of the torsion $ \Lambda(G) $-submodule of the Pontryagin dual of the Selmer group.
				Suppose $ M $ is a finitely generated $ \Lambda(G) $-module and  let $ T_{\Lambda(G) }(M) $ denote the $ \Lambda(G)  $-torsion submodule of $ M $.
				Note that
				\begin{align*}
					T_{\Lambda(G)}(M) = \ker\left( M \to M \ot_{\Lambda(G)} K \right),
				\end{align*}
				where $ K = \T{Frac}(\Lambda(G)) $ is the fraction field of $ \Lambda(G) $.
				With abuse of terminology, by the $ \mu$-invariant of $ M $, we mean the $ \mu $-invariant of the $ \Lambda(G)  $-torsion submodule of $ M $.
				\begin{prop}\label{prop:signed_mu_implies_mu-vanishing}
					Suppose $E/L  $ is an elliptic curve satisfying \hyperref[Hyp1]{Hyp 1} and \hyperref[Hyp2]{Hyp 2$ ^{\pm/\pm} $}.
					Then, the $ \mu $-invariant of 
					$\mathfrak{X}(E_{p^{\infty}}/L_\infty)  $ is bounded by the signed $ \mu $-invariant $ \mu_G^{\pm/\pm}(E_{p^{\infty}}/L_\infty) $.
				\end{prop}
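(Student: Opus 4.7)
The plan is to exploit the short exact sequence
\[
0 \longrightarrow U \longrightarrow \mathfrak{X}(E_{p^\infty}/L_\infty) \longrightarrow \mathfrak{X}^{\pm/\pm}(E_{p^\infty}/L_\infty) \longrightarrow 0,
\]
which was constructed in the proof of Theorem~\ref{thm:no no pseudo-null}~(\ref{thm:no no pseudo-null-part2}) under exactly the hypotheses of the proposition. Recall from Corollary~3.9 of \cite{Lei-Lim} that $U=\bigoplus_{v\in\{\fp,\bfp\}} J_v^{\pm}(E_{p^\infty}/L_\infty)\Pd$ is a free $\Lambda(G)$-module of rank $2$. In particular $U$ is $\Lambda(G)$-torsion-free and its $\mu$-invariant vanishes.

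First I would compare torsion submodules. Write $T:=T_{\Lambda(G)}(\mathfrak{X}(E_{p^\infty}/L_\infty))$. Since $U$ is torsion-free, the composition $T\hookrightarrow \mathfrak{X}(E_{p^\infty}/L_\infty)\twoheadrightarrow \mathfrak{X}^{\pm/\pm}(E_{p^\infty}/L_\infty)$ has kernel $T\cap U=0$, giving an injection
\[
T \hooklongrightarrow \mathfrak{X}^{\pm/\pm}(E_{p^\infty}/L_\infty).
\]
Because $\mathfrak{X}^{\pm/\pm}(E_{p^\infty}/L_\infty)$ is $\Lambda(G)$-torsion by \hyperref[Hyp2]{Hyp 2$^{\pm/\pm}$}, the image of $T$ lies inside a torsion module; denote the quotient by $Q$, so that there is a short exact sequence of torsion $\Lambda(G)$-modules
\[
0 \longrightarrow T \longrightarrow \mathfrak{X}^{\pm/\pm}(E_{p^\infty}/L_\infty) \longrightarrow Q \longrightarrow 0.
\]

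The remaining step is the additivity of the $\mu$-invariant on short exact sequences of finitely generated torsion $\Lambda(G)$-modules. Applied to the above display this gives
\[
\mu_G(\mathfrak{X}^{\pm/\pm}(E_{p^\infty}/L_\infty)) \;=\; \mu_G(T) + \mu_G(Q) \;\geq\; \mu_G(T),
\]
and, by the convention stated just before the proposition, $\mu_G(T)$ is precisely the $\mu$-invariant of $\mathfrak{X}(E_{p^\infty}/L_\infty)$. Combining these yields the asserted inequality. The only delicate point is the injectivity $T\hookrightarrow \mathfrak{X}^{\pm/\pm}(E_{p^\infty}/L_\infty)$, which rests on the freeness of $U$; once that is in hand, additivity of $\mu$ finishes the argument without further input.
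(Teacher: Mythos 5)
Your argument is correct and follows the same overall architecture as the paper's proof: both start from the short exact sequence $0 \to U \to \mathfrak{X}(E_{p^\infty}/L_\infty) \to \mathfrak{X}^{\pm/\pm}(E_{p^\infty}/L_\infty) \to 0$, both reduce the claim to showing that the torsion submodule $T:=T_{\Lambda(G)}(\mathfrak{X}(E_{p^\infty}/L_\infty))$ embeds in $\mathfrak{X}^{\pm/\pm}(E_{p^\infty}/L_\infty)$, and both then invoke monotonicity of the $\mu$-invariant on submodules. The difference is only in how the embedding is obtained. The paper tensors the sequence against $K=\mathrm{Frac}(\Lambda(G))$ and applies the snake lemma to a two-row diagram, producing a longer four-term exact sequence whose first map is the desired embedding. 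You instead observe directly that $T\cap U=0$ because $U$ is free (hence torsion-free), so the composite $T\hookrightarrow \mathfrak{X}(E_{p^\infty}/L_\infty)\twoheadrightarrow \mathfrak{X}^{\pm/\pm}(E_{p^\infty}/L_\infty)$ is already injective. This is a more elementary way to get the same injection and avoids the snake lemma; the paper's snake-lemma version has the modest added benefit of identifying the cokernel explicitly as $U\otimes_{\Lambda(G)} K/\Lambda(G)$ modulo the image of $\mathfrak{X}$, but that extra information is not used to prove the $\mu$-inequality. One small caveat: the short exact sequence is available here because \hyperref[Hyp2]{Hyp 2$^{\pm/\pm}$} (via Proposition \ref{pro:Hyp 2 equivalence}) guarantees the surjectivity needed to construct it; Theorem \ref{thm:no no pseudo-null} itself operates under the stronger hypothesis that $\Sel(E_{p^\infty}/L)$ is finite, so you should cite the surjectivity input rather than the theorem's hypothesis wholesale, but the substance of your argument is unaffected.
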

				\begin{proof}
					Noting that $ K $ is a flat $ \Lambda(G) $-module
					Thus, when we apply the functor $ - \ot_{\Lambda(G)} K $
					to the short exact sequence \eqref{eq:signed Selmer and Selmer infty with J_v pm}, we obtain the following diagram:
					{\small
						\[	\begin{tikzcd}
							0 \arrow[r] &  U \arrow[r] \arrow[d]          &\mathfrak{X} (E_{p^{\infty}}/L_\infty)  \arrow[r] \arrow[d]    & \mathfrak{X}^{\pm/\pm} (E_{p^{\infty}}/L_\infty) \arrow[d] \arrow[r] & 0\\
							0 \arrow[r] &  U \ot K \arrow[r]         &\mathfrak{X} (E_{p^{\infty}}/L_\infty) \ot K  \arrow[r]     & \mathfrak{X}^{\pm/\pm} (E_{p^{\infty}}/L_\infty) \ot K \arrow[r] & 0.       	
						\end{tikzcd}\]
					}
					
					The snake lemma implies the following exact sequence  of $ \Lambda(G) $-modules:
					\begin{align}\label{eq:bounding_mu_invariants}
						\begin{tikzcd}
							0 \rightarrow 
							T_{\Lambda(G)}(\mathfrak{X}(\mathsf{E}_{p^{\infty}}/L_\infty)) 
							\rightarrow   \mathfrak{X}^{\pm/\pm}(\mathsf{E}_{p^{\infty}}/L_\infty)
							\rightarrow
							U \ot_{\Lambda(G)} K/\Lambda(G)
							\\
							\rightarrow  
							\mathfrak{X}(\mathsf{E}_{p^{\infty}}/L_\infty) \ot_{\Lambda(G)} K/\Lambda(G)
							\rightarrow  
							0.
						\end{tikzcd}
					\end{align}
					The exact sequence \eqref{eq:bounding_mu_invariants} implies that the $ \Lambda(G) $-module $ 			T_{\Lambda(G)}(\mathfrak{X}(\mathsf{E}_{p^{\infty}}/L_\infty))  $ embeds into $ \mathfrak{X}^{\pm/\pm}(\mathsf{E}_{p^{\infty}}/L_\infty) $.
					This means that 
					\[
					\mu_G\left(T_{\Lambda(G)}(\mathfrak{X}(\mathsf{E}_{p^{\infty}}/L_\infty))\right) \leq	\mu_G^{\pm/\pm}(E_{p^{\infty}}/L_\infty).
					\] 
				\end{proof}
				\begin{rem}
					A similar argument shows this result for the cyclotomic $ \bZ_p $-extensions. 
					Also, note that Proposition \ref{prop:signed_mu_implies_mu-vanishing} does not assume  \hyperref[Conjecture A]{Conjecture A}.
				\end{rem}
\bibliographystyle{line}
\bibliography{JAMS-paper}

\begin{thebibliography}{10}

\bibitem{Balister1997}
P.~N. Balister and Susan Howson.
\newblock Note on {N}akayama's lemma for compact {$\Lambda$}-modules.
\newblock {\em Asian Journal of Mathematics}, 1(2):224--229, 1997.

\bibitem{SujathabookTIFR}
John Coates and Ramdorai Sujatha.
\newblock Galois cohomology of elliptic curves, tata institute of fundamental
  research lectures on mathematics, vol. 88, 2000.

\bibitem{CoatesSujathaI}
John Coates and Ramdorai Sujatha.
\newblock Fine {S}elmer groups of elliptic curves over {$p$}-adic {L}ie
  extensions.
\newblock {\em Math. Ann.}, 331(4):809--839, 2005.

\bibitem{CoatesSujatha_book}
John Coates and Ramdorai Sujatha.
\newblock {\em Galois cohomology of elliptic curves}.
\newblock Published by Narosa Publishing House, New Delhi; for the Tata
  Institute of Fundamental Research, Mumbai, second edition, 2010.

\bibitem{Eisenbud1995}
David Eisenbud.
\newblock {\em Commutative algebra}, volume 150 of {\em Graduate Texts in
  Mathematics}.
\newblock Springer-Verlag, New York, 1995.
\newblock With a view toward algebraic geometry.

\bibitem{Greenberg1989}
Ralph Greenberg.
\newblock Iwasawa theory for {$p$}-adic representations.
\newblock In {\em Algebraic number theory}, volume~17 of {\em Adv. Stud. Pure
  Math.}, pages 97--137. Academic Press, Boston, MA, 1989.

\bibitem{GreenbergVatsal}
Ralph Greenberg and Vinayak Vatsal.
\newblock On the {I}wasawa invariants of elliptic curves.
\newblock {\em Invent. Math.}, 142(1):17--63, 2000.

\bibitem{Hachimori2010}
Yoshitaka Hachimori and Tadashi Ochiai.
\newblock Notes on non-commutative {I}wasawa theory.
\newblock {\em Asian Journal of Mathematics}, 14(1):11--17, 2010.

\bibitem{Honda1970}
Taira Honda.
\newblock On the theory of commutative formal groups.
\newblock {\em Journal of the Mathematical Society of Japan}, 22:213--246,
  1970.

\bibitem{Jannsen2014}
Uwe Jannsen.
\newblock A spectral sequence for {I}wasawa adjoints.
\newblock {\em M\"{u}nster Journal of Mathematics}, 7(1):135--148, 2014.

\bibitem{Kim2009}
Byoung~Du Kim.
\newblock The {I}wasawa invariants of the plus/minus {S}elmer groups.
\newblock {\em Asian Journal of Mathematics}, 13(2):181--190, 2009.

\bibitem{Kobayashi}
Shin-ichi Kobayashi.
\newblock Iwasawa theory for elliptic curves at supersingular primes.
\newblock {\em Inventiones Mathematicae}, 152(1):1--36, 2003.

\bibitem{Lei-Lim}
Antonio Lei and Meng~Fai Lim.
\newblock Akashi series and euler characteristics of signed selmer groups of
  elliptic curves with semistable reduction at primes above p.
\newblock {\em Journal de théorie des nombres de Bordeaux}.

\bibitem{Lei2020}
Antonio Lei and Florian Sprung.
\newblock Ranks of elliptic curves over {$\Bbb Z_p^2$}-extensions.
\newblock {\em Israel Journal of Mathematics}, 236(1):183--206, 2020.

\bibitem{Lei-Sujatha}
Antonio Lei and Ramdorai Sujatha.
\newblock {On Selmer groups in the supersingular reduction case}.
\newblock {\em To appear in Tokyo Journal of Mathematics}, 2019.
\newblock Weblink:
  \url{http://antoniolei.com/wp-content/uploads/2019/12/leisuj12.pdf} \EatDot.

\bibitem{Neukirch}
J\"{u}rgen Neukirch, Alexander Schmidt, and Kay Wingberg.
\newblock {\em Cohomology of number fields}, volume 323 of {\em Grundlehren der
  Mathematischen Wissenschaften [Fundamental Principles of Mathematical
  Sciences]}.
\newblock Springer-Verlag, Berlin, second edition, 2008.

\bibitem{SujFil}
F.~A.~E N{uccio~Mortarino~Majno~di~Capriglio} and Ramdorai Sujatha.
\newblock Residual supersingular {I}wasawa theory and signed {I}wasawa
  invariants.
\newblock 2019.
\newblock Weblink: \url{http://arxiv.org/pdf/1911.10649v2.pdf} \EatDot.

\bibitem{OchiVenjakov}
Yoshihiro Ochi and Otmar Venjakob.
\newblock On the ranks of {I}wasawa modules over {$p$}-adic {L}ie extensions.
\newblock {\em Math. Proc. Cambridge Philos. Soc.}, 135(1):25--43, 2003.

\bibitem{Schneider2010}
Peter Schneider and Otmar Venjakob.
\newblock Localizations and completions of skew power series rings.
\newblock {\em American Journal of Mathematics}, 132(1):1--36, 2010.

\bibitem{stacks-project}
The {Stacks project authors}.
\newblock The stacks project.
\newblock \url{https://stacks.math.columbia.edu}, 2022.

\bibitem{venjakob2002structure}
Otmar Venjakob.
\newblock On the structure theory of the {I}wasawa algebra of a {$p$}-adic lie
  group.
\newblock {\em Journal of the European Mathematical Society}, 4(3):271--311,
  2002.

\bibitem{Wash1}
Lawrence~C Washington.
\newblock {\em Introduction to cyclotomic fields}, volume~83 of {\em Graduate
  Texts in Mathematics}.
\newblock Springer-Verlag, New York, second edition, 1997.

\bibitem{Washington2}
Lawrence~C. Washington.
\newblock {\em Elliptic curves}.
\newblock Discrete Mathematics and its Applications (Boca Raton). Chapman \&
  Hall/CRC, Boca Raton, FL, second edition, 2008.
\newblock Number theory and cryptography.

\end{thebibliography}
			\end{document}